\documentclass[12pt]{article}

\usepackage[dvipsnames]{xcolor}
\usepackage[all]{xy}
\usepackage{amssymb}
\usepackage{amsmath}
\usepackage{amsthm}
\usepackage{amscd}
\usepackage{amsfonts}
\usepackage{mathrsfs}
\usepackage{mathtools}
\usepackage{mathabx}
\usepackage{enumerate}
\usepackage{times}
\usepackage[pdftex]{hyperref}
\usepackage[most]{tcolorbox}
\usepackage{titletoc}
\usepackage{moresize}
\usepackage{anyfontsize}
\usepackage[indentafter]{titlesec}
\usepackage{cleveref}
\usepackage[utf8]{inputenc}
\usepackage[T1]{fontenc}
\usepackage[margin=2.5cm]{geometry}
\usepackage{setspace}

\theoremstyle{plain}
\newtheorem{theo}{Theorem}[section]
\newtheorem{prop}[theo]{Proposition}
\newtheorem{lemma}[theo]{Lemma}
\newtheorem{coro}[theo]{Corollary}

\newtheorem{rem}[theo]{Remark}
\newtheorem{question}[theo]{Question}

\newtheorem*{thm1}{Theorem 1}

\theoremstyle{definition}
\newtheorem{df}[theo]{Definition}

\newtheoremstyle{TheoremNum}
{\topsep}{\topsep}              
{\itshape}                      
{}                              
{\bfseries}                     
{.}                             
{ }                             
{\thmname{#1}\thmnote{ \bfseries #3}}
\theoremstyle{TheoremNum}

\newcommand{\F}{\mathbb{F}}
\newcommand{\A}{\mathcal{A}}
\newcommand{\B}{\mathcal{B}}

\newcommand{\Z}{\mathbb{Z}}
\newcommand{\C}{\mathcal{C}}
\newcommand{\G}{\mathcal{G}}
\newcommand{\calH}{\mathcal{H}}

\newcommand{\calS}{\mathcal{S}}

\newcommand{\bfA}{\mathbf{A}}
\newcommand{\bfB}{\mathbf{B}}
\newcommand{\bfG}{\mathbf{G}}
\newcommand{\bfH}{\mathbf{H}}

\providecommand{\keywords}[1]
{\small	\textbf{Keywords:} #1.}

\makeatletter
\newsavebox{\@brx}
\newcommand{\llangle}[1][]{\savebox{\@brx}{\(\m@th{#1\langle}\)}%
  \mathopen{\copy\@brx\mkern2mu\kern-0.9\wd\@brx\usebox{\@brx}}}
\newcommand{\rrangle}[1][]{\savebox{\@brx}{\(\m@th{#1\rangle}\)}%
  \mathclose{\copy\@brx\mkern2mu\kern-0.9\wd\@brx\usebox{\@brx}}}
\makeatother

\everymath{\displaystyle}

\doublespacing

\sloppy
\hyphenpenalty=100000
\clubpenalty=10000
\widowpenalty=10000
\displaywidowpenalty=10000

\definecolor{mygreen}{rgb}{0, 0.3, 0}
\definecolor{myred}{rgb}{0.4,0,0}
\definecolor{myblue}{rgb}{0.1,0.1,0.6}

\hypersetup{
     colorlinks=true,
     linkcolor=mygreen,
     filecolor=myblue,
     citecolor = myred,      
     urlcolor=myblue,
}

\title{Prosoluble subgroups of free profinite products}
\author{ Pavel A. Zalesskii}
\date{\today}

\begin{document}

\maketitle

\begin{abstract} We describe finitely generated and second countable prosoluble subgroups of free profinite products. We  also give a description of relatively projective prosoluble groups.

\end{abstract}

\keywords{profinite groups, free profinite products, prosoluble subgroups}

\section{Introduction}

The  result that describes the algebraic structure of subgroups of free products  obtained by  Kurosh \cite{kurosh}  in 1934 is regarded as one of the fundamental results in combinatorial group theory.
Unfortunately,  the profinite analogue of the Kurosh Subgroup Theorem
does not hold for free products of profinite groups and so the algebraic structure of closed subgroups of free profinite products is not clear. Therefore   it is reasonable to study the  structure of the most important classes of subgroups of free profinite products. One of the most natural and important class of profinite groups is the class of prosoluble groups, as they play the same role in the profinite group theory  as soluble subgroups in the theory of finite groups.

The study of prosoluble subgroups of a free profinite product was initiated in middle 90s of the last century
 by  Jarden and Pop in \cite{J} and \cite{pop}, respectively;  the interest in the question has origin in Galois theory  (cf. \cite{pop}, \cite{HJ}, \cite{J}). In fact, the positive solution of  \cite[Problem 18]{Ja}, stating that a free profinite product of absolute Galois groups, is an absolute Galois group strengthens  this motivation (see \cite{K} for a description,  the statement and the history of the result).
 
 Pop  established the following necessary conditions for a prosoluble subgroup to be the closed subgroup of
a free product of profinite groups.

\begin{thm1}\label{pop}
Let $H$ be a prosoluble subgroup of a free profinite product $G=\coprod_{i\in I} G_i$.  Then
 one of the following statements holds:
\begin{enumerate}[(i)]
\item There is a prime $p$ such that $H \cap g G_i g^{-1}$ is pro-$p$ for every $i \in I$ and $g \in G$;

\item All non-trivial intersections $H \cap g G_i g^{-1}$ for $i \in I$ and $g \in G$ are finite and conjugate in $H$;

\item A conjugate of $H$ is a subgroup of $G_i$ for some $i \in I$.
\end{enumerate}
\end{thm1}

In the same paper Pop asks whether or not the situation described in (ii) of Theorem \ref{pop} does occur. Guralnick and Haran in \cite{GH} give an
affirmative answer to this question. Namely they prove that profinite Frobenius groups $\widehat\Z_\pi\rtimes C$, where $\pi$ is a set of primes and $C$ is cyclic finite, can be realized as a subgroup of a free profinite product. 

Our first objective  is to show that in fact, profinite Frobenius groups $\widehat\Z_\pi\rtimes C$ are the only possibility in Theorem  \ref{pop}(ii).

\begin{theo}\label{prosoluble general int}
Let $G=\coprod_{t\in T} G_t$  be a free profinite product of profinite groups over a profinite space $T$ and $H$  a prosoluble subgroup of $G$ that is not contained  in any conjugate $G_t^g$ of a free factor $G_t$.  If  the intersections $H \cap G_t^g, t\in T, g\in G$ are not all pro-$p$, then  $H\cong \widehat \Z_\pi\rtimes C$ is a  profinite Frobenius group (i.e $C$ is cyclic, $|C|$  divides $p-1$ for any $p\in \pi$ and $[c,z]\neq 1$ for all $1\neq c\in C, 0\neq z\in \widehat\Z_\pi$).
\end{theo}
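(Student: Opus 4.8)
The plan is to feed the hypotheses into Pop's trichotomy, read off a tree action, and then recover the Frobenius structure by descending to finite quotients. Since $H$ is not contained in any conjugate $G_t^g$, case (iii) of Theorem 1 is excluded, and since the intersections $H \cap G_t^g$ are not all pro-$p$, case (i) is excluded; hence we are in case (ii), so there is a finite group $C$ to which every non-trivial intersection $H \cap G_t^g$ is $H$-conjugate. Now $G = \coprod_t G_t$ acts on its standard profinite tree $S$ with trivial edge stabilizers and vertex stabilizers the $G_t^g$; restricting to $H$ and passing to the unique minimal $H$-invariant subtree $D \subseteq S$, the group $H$ acts on $D$ with trivial edge stabilizers, the non-trivial vertex stabilizers being precisely the conjugates of $C$. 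As the edge stabilizers are trivial, a non-trivial element fixes at most one vertex, so distinct vertex stabilizers intersect trivially; in particular $C$ is malnormal, and $N_H(C)=C$ (a normalizing element would fix both the vertex stabilized by $C$ and its translate, hence fixes that vertex and lies in $C$). Thus $C$ is to be the Frobenius complement.

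Next I would construct the kernel. Let $K$ consist of the identity together with every $h\in H$ fixing no vertex of $D$. To show $K$ is a closed normal subgroup with $H=K\rtimes C$, I would pass to finite quotients: by compactness the relations ``$C$ malnormal'' and ``$N_H(C)=C$'' descend, along a cofinal family of open normal $N$, to the statement that the image of $C$ is a Frobenius complement in the finite group $H/N$. Frobenius's theorem then supplies a nilpotent kernel in each $H/N$; these kernels are characteristic, hence form an inverse system, and $K=\varprojlim_N \overline{K}_N$ is a closed normal pronilpotent subgroup with $H=K\rtimes C$ and $C$ acting fixed-point-freely. At this stage $H$ is already a profinite Frobenius group with complement $C$; what remains is to identify $K$.

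The main obstacle is to prove that $K\cong\widehat{\Z}_\pi$ is \emph{procyclic}. By construction $K$ acts freely on $D$, so by the structure theory of profinite groups acting freely on profinite trees (after the standard reduction to the second-countable case) $K$ is free pro-$\C$; in particular each Sylow subgroup $K_p$ is free pro-$p$. Prosolubility by itself imposes no bound on the ranks, since an arbitrary product of free pro-$p$ groups is pronilpotent; the decisive extra input must therefore be the fixed-point-free action of the non-trivial $p'$-group $C$ on each $K_p$, coupled with the fact that \emph{all} vertex stabilizers form a single $H$-conjugacy class. I expect the crux to be showing that these two facts force the quotient $K\backslash D$, read modulo the induced $C$-action, to carry exactly one independent loop, so that every $K_p$ has rank one and $K\cong\prod_{p\in\pi}\Z_p=\widehat{\Z}_\pi$. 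This is exactly where the profinite failure of the Kurosh subgroup theorem is used: discretely the same data would only yield a free product $C\ast F$, whereas profinitely the group must collapse onto the metabelian Frobenius group.

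Granting $K\cong\widehat{\Z}_\pi$, the numerical conditions follow formally. The faithful action of $C$ embeds it into $\mathrm{Aut}(\widehat{\Z}_\pi)=\prod_{p\in\pi}\Z_p^{\times}$; being finite, $C$ lands in the torsion subgroup, i.e.\ in the roots of unity, which forces $C$ to be cyclic with $|C|$ dividing $p-1$ for every $p\in\pi$. Fixed-point-freeness of the action is precisely the requirement that $[c,z]\neq 1$ for all $1\neq c\in C$ and $0\neq z\in\widehat{\Z}_\pi$. Hence $H\cong\widehat{\Z}_\pi\rtimes C$ is a profinite Frobenius group, as required.
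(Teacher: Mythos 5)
Your reduction to case (ii) of Pop's theorem, the identification of the complement $C$, and the closing paragraph (that $C$ embeds in the torsion of $\prod_{p\in\pi}\Z_p^{\times}$, hence is cyclic with $|C|$ dividing $p-1$) are all fine, and the last step is indeed formal \emph{once} $K\cong\widehat\Z_\pi$ is known. But that is precisely the step you do not prove. Your sentence ``I expect the crux to be showing that \dots the quotient $K\backslash D$ \dots carries exactly one independent loop, so that every $K_p$ has rank one'' is a restatement of the goal, not an argument, and the mechanism you gesture at is the wrong one: a profinite group acting freely on a profinite tree is only \emph{projective}, not free pro-$\mathcal{C}$, and there is no profinite analogue of ``rank equals the number of independent loops of the quotient graph'' to invoke --- as you yourself observe, the Kurosh/Nielsen--Schreier machinery is exactly what fails here. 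The paper's proof of this step is where all the work lies and is of a completely different nature: after replacing $C$ by $A\rtimes C_l$ with $A$ elementary abelian $p$ and $l\neq p$ prime, it shows that every $q$-Sylow subgroup of the projective group $K$ is cyclic by playing off (a) the Herfort--Ribes theorem (Lemma \ref{herfort-Ribes}) that the fixed-point subgroup of a coprime finite soluble group acting on a non-abelian free pro-$q$ group is either everything or infinitely generated, against (b) a Frattini-type argument (Lemma \ref{frattini argument1}) using the hypothesis that all maximal finite subgroups of every intermediate group containing $C$ are conjugate to $C$, which forces $C_{U_q}(A)=1$; the contradiction shows the Sylow subgroups are cyclic, hence $K$ is metacyclic, $H$ is soluble, and \cite[Corollary 7.1.8]{R} then yields the Frobenius structure $\widehat\Z_\pi\rtimes C$. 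None of this is present or foreshadowed in your sketch, so the proposal has a genuine gap at its central point.

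Two secondary issues. First, your construction of the kernel $K$ (elements fixing no vertex, normality and the splitting $H=K\rtimes C$ obtained by ``descending malnormality to finite quotients and applying Frobenius's theorem'') is hand-waved: malnormality of $C$ does not automatically pass to finite quotients, and the paper avoids this entirely by taking $K=H\cap\llangle G_2,\dots,G_n\rrangle$, which is visibly normal and torsion-free, giving $H=K\rtimes C$ directly. Second, you argue over the profinite index space $T$ in one go, whereas the paper first proves the statement for finitely many factors (Theorem \ref{prosoluble}) and then deduces the general case by an inverse-limit argument over clopen partitions of $T$; your tree-theoretic setup over a general $T$ would need additional justification, but that part is repairable, whereas the missing solubility argument is not.
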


This gives the complete description of prosoluble subgroups whose intersections with the conjugates of the free factors involve two different primes.

The case when all such intersection are pro-$p$ was studied by  Ersoy and  Herfort \cite{EH}. They showed   \cite[Theorem 1.4]{EH}  that a free profinite product  of pro-$p$ groups contains a free prosoluble product of conjugates of these pro-$p$ groups. 

Ersoy and  Herfort posed in the same paper the following questions. 

\begin{question}\cite[Question 3.2]{EH}\label{subgroup}. Let $G = A \amalg B$ be the free profinite product of pro-p groups $A$ and $B$. Is
every prosoluble subgroup $H$ of $G$ isomorphic to a closed subgroup of the free prosoluble product $A \amalg^s B$?\end{question}

\begin{question}\cite[Problem 4.5]{EH}\label{retract}. Let $A$ be a pro-p group and $P$ a projective prosoluble group. Does the free
profinite product $G := A \amalg P$ admit a prosoluble retract isomorphic to  a free prosoluble product $A \amalg_s P$?\end{question}

The second objective of the paper is to answer these questions. Namely, in Theorem \ref{retracts} we answer positively  Question \ref{retract}. Theorem \ref{subgroups} answers positively Question \ref{subgroup} if $H$ is   second countable, in particular, if $H$ is finitely generated.
This practically completes a description of prosoluble subgroups of a free profinite product. Indeed, the answers to the above questions show that for a subgroup $H$ of a free profinite product $\coprod_{x\in X} G_x$ either $H$ is contained in a conjugate of a free factor or it is a Frobenius group or it is a subgroup of a free prosoluble product of pro-$p$ groups $H\cap G_i^{g_i}$ with
 no restrictions.

 \begin{theo} \label{characterization fg} Let $G=\coprod_{x \in X} G_x$ be  a  free profinite product  over a profinite space $X$. A finitely generated prosoluble group $H$  is isomorphic to a subgroup of $G$  if and only if one of the following holds:
 
 \begin{enumerate} 
 
 \item[(i)]  $H$ is isomorphic to a subgroup of some  free prosoluble product $H_s=\coprod_{i=1}^n{}^s\  H_i$ of finitely generated pro-$p$ groups $H_i$  such that each $H_i$ is isomorphic to  a subgroup of some  $G_x$ and $\sum_{i=1}^n d(H_i)\leq d(H)$ ;
 \item[(ii)] $H$ is isomorphic to a profinite  Frobenius group  $\widehat \Z_\pi\rtimes C$, with $C$ a finite cyclic subgroup of $G_x$ for some $x\in X$;
\item[(iii)] $H$ isomorphic to a subgroup of  a free factor $G_x$.
\end{enumerate} 
 
 \end{theo}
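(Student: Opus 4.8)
The statement is an equivalence, so I would prove the two implications separately, treating the \emph{sufficiency} (``if'') direction first since it is the more elementary one. Case (iii) is immediate, as each free factor $G_x$ embeds canonically into $G=\coprod_{x\in X}G_x$. For case (ii) I would follow Guralnick and Haran \cite{GH}, who realize every such Frobenius group $\widehat\Z_\pi\rtimes C$ (with $C$ cyclic and $|C|\mid (p-1)$ for each $p\in\pi$) inside a suitable free profinite product; the only adaptation needed is to place the complement $C$ inside the prescribed factor $G_x$, which is possible precisely because $C\le G_x$ by hypothesis, with the torsion-free part $\widehat\Z_\pi$ supplied by the free product structure. For case (i) I would use the standard fact that subgroups $H_i\le G_{x_i}$ chosen from distinct free factors generate their free profinite product $\coprod_{i=1}^{n}H_i$ inside $G$; then \cite[Theorem 1.4]{EH} embeds the free prosoluble product $\coprod_{i=1}^{n}{}^{s}H_i=H_s$ into this free profinite product, whence $H\le H_s\le G$. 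The one delicate point is when several factors $H_i$ must be housed in the same $G_x$, which I would resolve using the tree structure of the free product.

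For the \emph{necessity} (``only if'') direction I would apply Theorem \ref{pop} to the prosoluble subgroup $H$, obtaining the trichotomy. If a conjugate of $H$ lies in some $G_x$, this is case (iii). Otherwise $H$ is contained in no conjugate of a free factor, and I split according to whether the intersections $H\cap G_x^g$ are all pro-$p$. If they are not, Theorem \ref{prosoluble general int} forces $H\cong\widehat\Z_\pi\rtimes C$; here the nontrivial intersections $H\cap G_x^g$ are, by Theorem \ref{pop}(ii), finite and conjugate in $H$, and since $\widehat\Z_\pi$ is torsion-free every finite subgroup is conjugate into $C$, so each such intersection is a conjugate of $C$. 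This gives $C\cong H\cap G_{x}^{g}\le G_{x}^{g}\cong G_{x}$, together with the cyclicity of $C$ from Theorem \ref{prosoluble general int}, which is exactly case (ii).

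The remaining case is when all intersections $H\cap G_x^g$ are pro-$p$ for a single prime $p$ (as provided by Theorem \ref{pop}(i)), and this must yield case (i). Here the nontrivial intersections $H_i=H\cap G_{x_i}^{g_i}$ are the vertex stabilizers of the action of $H$ on the standard profinite tree of the free product, the edge stabilizers being trivial. I would invoke Theorem \ref{subgroups} (the engine answering Question \ref{subgroup}, in its form for an arbitrary free factor system) to obtain an embedding $H\hookrightarrow\coprod_i{}^{s}H_i$ carrying each intersection onto a free factor. Each $H_i$ is automatically isomorphic to a subgroup of $G_{x_i}$, since $H_i\le G_{x_i}^{g_i}\cong G_{x_i}$, and each is pro-$p$.

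The genuine obstacle is the finiteness and rank bookkeeping. I expect the clean route to be homological and to deliver finiteness as a byproduct of the rank inequality. Identifying each free factor $H_i$ of $H_s=\coprod_i{}^{s}H_i$ with the corresponding intersection inside $H$, the canonical retractions $r_i\colon H_s\to H_i$ (identity on $H_i$, trivial on the other factors) restrict to surjections $r_i|_H\colon H\twoheadrightarrow H_i$, because $H$ contains $H_i$. Assembling their composites with the Frattini quotients gives a surjection $H\twoheadrightarrow\prod_i H_i/\Phi(H_i)\cong(\Z/p)^{\sum_i d(H_i)}$, where the single common prime $p$ is what makes the target a single elementary abelian $p$-group. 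Hence $d(H)\ge\sum_i d(H_i)$; in particular the sum is finite, forcing all but finitely many $H_i$ to be trivial and each surviving $H_i$ to be finitely generated. This is precisely where the triviality of the edge stabilizers and the prosoluble (rather than profinite) nature of the product are used, ensuring that the contributions of distinct factors to $H_1(H,\Z/p)$ are independent; verifying that the embedding of Theorem \ref{subgroups} does send intersections onto free factors compatibly with these retractions is the step I would expect to require the most care.
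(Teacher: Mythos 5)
Your architecture is broadly the right one, and your retraction/Frattini argument for $\sum_i d(H_i)\le d(H)$ is a legitimate variant of what the paper does in Proposition \ref{pro-p pile} (though note the combined map $H\to\prod_i H_i/\Phi(H_i)$ is surjective because its image contains each coordinate subgroup, being the image of $H_i\le H$ under the $i$-th retraction --- surjectivity of each component alone would not suffice). The paper itself disposes of Theorem \ref{characterization fg} in a few lines by quoting Theorem \ref{characterization} together with Corollary \ref{finitely generated}; what you are really proposing is a direct re-derivation of the content of Theorem \ref{characterization}, and there the proposal has two genuine gaps.

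First, in the pro-$p$ branch of necessity, the embedding $H\hookrightarrow\coprod_i{}^{s}H_i$ is the heart of the matter and is not delivered by Theorem \ref{subgroups}: that theorem embeds $H$ into the free prosoluble product $G_1\amalg^s G_2$ of the \emph{ambient} factors, not of the intersections $H\cap G_x^g$. The tool actually needed is Haran's relative projectivity of $H$ with respect to $\{H\cap G_x^g\}$ combined with Proposition \ref{into free prod} (i.e.\ Corollary \ref{embedding in prosoluble free product}), and this machinery unavoidably produces a target of the form $\coprod_t{}^{s}H_t\amalg^s F_s$ with $F_s$ a free prosoluble group. Since $F_s$ is not pro-$p$, it is not permitted in (i) and must be absorbed into a free prosoluble product of pro-$p$ groups; this is precisely the role of Lemma \ref{embedding} (resting on Lemmas \ref{fater} and \ref{embeds}), which your proposal does not address. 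Second, in the sufficiency of (i) you flag but do not resolve the case where several $H_i$ must be housed in the same $G_x$; appealing to ``the tree structure of the free product'' will not close this, because the profinite analogue of ``a group generated by vertex stabilizers of a tree action with trivial edge stabilizers is their free product'' fails in general --- this is exactly why the Kurosh subgroup theorem breaks down profinitely. The paper's resolution is Lemma \ref{getting free free factor}: one locates $G\amalg F$ inside $G$ with $F$ a non-trivial free profinite group, so that by \cite[Corollary 5.3.3]{R} the normal closure of $G$ is a free profinite product of disjoint conjugates of $G$ indexed by $F$, supplying as many free-product-positioned copies of each $G_x$ as required.
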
 

In fact Theorem \ref{characterization fg} valid also for a second countable subgroup $H$ (see Theorem \ref{characterization}), and  the hypothesis of second countability  is due to the open question posed by  Haran \cite[Problem 5.6 (a)]{H}. In this paper Haran introduced the notion of relatively projective  profinite groups and proved that a subgroup $H$ of a free profinite product $\coprod_{x \in X} G_x$ is projective relative to the family $\{H\cap G_x^g\mid x\in X, g\in G\}$.  Haran   \cite[Problem 5.6 (a)]{H} asks whether a profinite group which is projective relatively to the family of subgroups $\{G_t\mid t\in T\}$ embeds into a free profinite product of groups isomorphic to $G_t, t\in T$. This question was solved in \cite{HZ} for relatively projective pro-$p$ groups but for profinite and prosoluble relatively projective groups it is still open.

The next theorem is a characterization of  prosoluble relatively projective groups.

\begin{theo}\label{relatively projective} Let $G$ be a prosoluble group and $\{ G_t\mid t\in T\}$  a continuous family of its subgroups closed for conjugation. Then $G$ is  projective relative to $\{ G_t\mid t\in T\}$ if and only if one of the following holds:

\begin{enumerate}

\item[(i)] there is a prime $p$ such that  all $G_t, t\in T$ are pro-$p$ and $G$ is $\calS$-projective relative to $\{ G_t\mid t\in T\}$, where $\calS$ is the class of all finite soluble groups;

\item[(ii)]   $G= \widehat\Z_\pi\rtimes C$ is a  profinite Frobenius group (i.e. $C$ is cyclic, $|C|$  divides $p-1$ for any $p\in \pi$ and $[c,z]\neq 1$ for all $1\neq c\in C,  
0\neq z\in\widehat \Z_\pi$),  $T_0$ is closed in $T$,   $\{ G_t\mid t\in T_0\}=\{C^z\mid z\in \widehat\Z_\pi\}$ and $|T_0/G|=1$.

\item[(iii)] $|T_0|=1$ and $G_t=G$ for the unique $t\in T_0$.

\end{enumerate}  

\end{theo}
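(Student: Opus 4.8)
The plan is to prove both directions along the trichotomy, establishing sufficiency by exhibiting relative projectivity in each case and necessity intrinsically from the lifting property. For sufficiency, case (iii) is immediate: when $G_t=G$ the given lift of $G_t$ already solves every compatible embedding problem. For case (ii) I would invoke the Guralnick--Haran realization \cite{GH} of the profinite Frobenius group $\widehat\Z_\pi\rtimes C$ as a closed subgroup of a free profinite product in which the intersections with the conjugates of the free factors are exactly the conjugates $C^z$; Haran's theorem \cite{H}, that a subgroup of a free profinite product is projective relative to the family of such intersections, then yields that $G=\widehat\Z_\pi\rtimes C$ is projective relative to $\{C^z\mid z\in\widehat\Z_\pi\}$, which is the prescribed family. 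For case (i) the content is the equivalence between full relative projectivity and $\calS$-relative projectivity when the family is pro-$p$; I would establish this by reducing an arbitrary compatible $\C$-embedding problem to a soluble one, using that the image of the prosoluble $G$ in the base is soluble and that a pro-$p$ family only interacts with the $p$-local soluble data of the kernel, so the non-soluble and non-$p$ contributions impose no obstruction beyond those already solved in the soluble category.

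For necessity, suppose $G$ is prosoluble and projective relative to $\{G_t\}$. Relative projectivity is equivalent to solvability of all minimal compatible embedding problems, i.e. those whose kernel is an irreducible $G$-module over some $\F_\ell$, and I would first reduce to these. If there is a single prime $p$ for which every $G_t$ is pro-$p$, then this reduction makes $G$ precisely $\calS$-projective relative to the family, placing us in case (i). The crux is therefore the \emph{non-uniform} case, where no single prime makes all $G_t$ pro-$p$.

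The main obstacle is to force this non-uniform case into the degenerate alternative (iii) or the Frobenius alternative (ii). Because embedding a prosoluble relatively projective group into a free profinite product is exactly Haran's open problem \cite[Problem 5.6 (a)]{H}, the argument must be intrinsic, extracting the Frobenius structure directly from the lifting property and mirroring the mechanism behind Theorem \ref{prosoluble general int}. I would proceed in two movements. First, using minimal compatible embedding problems, I would show each non-trivial $G_t$ must be finite: were some $G_t$ to contain an infinite pro-$\ell$ subgroup while a second prime appears in the family, the lifting property would produce inside finite soluble quotients of $G$ configurations incompatible with prosolubility, which is precisely the phenomenon that, for genuine subgroups, underlies Theorem \ref{prosoluble general int}. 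Second, with all $G_t$ finite, I would test embedding problems whose kernel is an irreducible $\F_\ell[G]$-module $M$ to show that each $G_t$ acts on $M$ without non-zero fixed points for every $\ell$ not dividing its order; this fixed-point-freeness across all such $\ell$ is exactly the Frobenius condition.

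Assembling these local conclusions over the inverse system of finite soluble quotients separates the primes into two sets: those $\ell$ on which the finite groups $G_t$ act fixed-point-freely, whose Sylow content generates a procyclic closed normal subgroup $N\cong\widehat\Z_\pi$, and those dividing the orders of the $G_t$, which generate a complement $C$; thus $G=N\rtimes C$ with $C$ finite and cyclic (the fixed-point-free action forcing $C$ cyclic) and $|C|$ dividing $p-1$ for every $p\in\pi$. Relative projectivity then identifies the non-trivial members of the family with the conjugates of $C$, since a fixed-point-free complement is the only proper subgroup that can serve as a relative free factor of a Frobenius group, and its self-normalizing property gives a bijection between the conjugates $C^z$ and $z\in\widehat\Z_\pi$; hence $\{G_t\mid t\in T_0\}=\{C^z\mid z\in\widehat\Z_\pi\}$ with $T_0$ closed in $T$ and $|T_0/G|=1$, which is case (ii), while the degenerate possibility $G_t=G$ gives (iii). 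I expect the first movement above---ruling out infinite $G_t$ purely at the level of relative embedding problems---to be the decisive difficulty, since it must reproduce intrinsically the dichotomy that Theorem \ref{prosoluble general int} obtains for genuine subgroups of free profinite products.
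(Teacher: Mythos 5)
There are two genuine gaps, one in each direction, and in both cases the missing idea is the one the paper actually turns on.

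For necessity you insist that the argument ``must be intrinsic'' because embedding a relatively projective prosoluble group into a free profinite product is Haran's open problem, and you then leave the decisive step (ruling out infinite $G_t$ and extracting the Frobenius structure purely from the lifting property) as an acknowledged difficulty. This conflates two different things: one does not need to embed $G$ itself into a free profinite product, only to map it there at finite level. The paper's proof (Theorem \ref{prosoluble pile}) chooses the target $B$ of a rigid embedding problem to be a free profinite product $\coprod_{i=1}^n A_i\amalg F$ over a finite quotient $A$ of $G$; relative projectivity hands you a solution $\gamma$, so $\gamma(G)$ \emph{is} an honest prosoluble subgroup of a free profinite product not contained in a free factor, and Theorem \ref{prosoluble} (Pop's theorem, already established in Section 2) applies to it. Running this over the inverse system of finite quotients pins down $K\cong\widehat\Z_\pi$ and $C$ finite cyclic in the Frobenius case, and in the non-Frobenius case produces a contradiction with a quotient $A$ that is neither cyclic nor Frobenius, forcing all $G_t$ to be pro-$p$. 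Your ``two movements'' (finiteness of the $G_t$, then fixed-point-freeness on irreducible modules) are not carried out and would amount to reproving Pop's theorem from scratch inside the embedding-problem formalism; the paper's device makes that unnecessary.

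For sufficiency in case (i), the assertion that ``a pro-$p$ family only interacts with the $p$-local soluble data of the kernel, so the non-soluble and non-$p$ contributions impose no obstruction'' is the statement to be proved, not an argument. The paper's proof routes a finite rigid embedding problem $(B,\mathcal{B})\to(A,\mathcal{A})$ through the square $B'=\coprod_i B_i\amalg F\to B_s=\coprod_i{}^s A_i\amalg^s F_s$: the $\mathcal{S}$-projectivity hypothesis gives a map $\gamma_s:G\to B_s$, and one then needs a section of $\rho:B'\to B_s$ \emph{as a morphism of pairs}. That section exists by Lemma \ref{relative projectivity}, which says the free prosoluble product of finite $p$-groups (and a projective group) is projective relative to the conjugates of its factors, and whose proof rests on the Ersoy--Herfort retraction \cite[Theorem 1.4]{EH}. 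This external input is the substance of the implication and is absent from your sketch. Your treatment of cases (ii) and (iii) in the converse, and the trivial observation that uniform pro-$p$ plus projectivity yields $\mathcal{S}$-projectivity by restricting the class, do agree with the paper.
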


\begin{coro}\label{finitely generated} If $G$ is finitely generated  with minimal number of generators $d=d(G)$, then in (i) of Theorem \ref{relatively projective} in addition one has: $T_0=\{ t\in T\mid G_t\neq 1\}$ is closed in $T$ and $|T_0/G|\leq d$. Moreover, $\sum_t d(G_t)\leq d(G)$, where $t$ runs through representatives of $G$-orbits in $T_0$.

\end{coro}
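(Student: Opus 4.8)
The plan is to derive both numerical bounds from a single family of surjections of $G$ onto finite elementary abelian $p$-groups, produced by the relative projectivity of part (i), and then to read off the topological statement about $T_0$ from the resulting finiteness. First I would record that the invariant $d(G_t)$ is constant along each $G$-orbit: since the family is closed for conjugation, the member attached to $gt$ is conjugate, hence isomorphic, to $G_t$, so $d(G_{gt})=d(G_t)$ and the sum $\sum_t d(G_t)$ over orbit representatives is well defined; moreover every $t\in T_0$ contributes $d(G_t)\geq 1$. For each orbit representative $t$ I write $V_t=G_t/\Phi(G_t)$ for the Frattini quotient of the pro-$p$ group $G_t$, an elementary abelian $p$-group whose $\mathbb F_p$-dimension equals $d(G_t)$.

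The heart of the argument is a lower bound for $d(G)$ obtained from relative embedding problems with trivial base. Given a finite set $S$ of orbit representatives in $T_0$ and, for each $t\in S$, a finite-dimensional quotient $W_t$ of $V_t$, I would form the finite soluble group $B_S=\bigoplus_{t\in S}W_t$ and consider the relative embedding problem consisting of the trivial map $G\to 1$, the quotient $B_S\to 1$, and the local data given by the Frattini projections $G_s\to W_t\hookrightarrow B_S$ for $s$ in the orbit of some $t\in S$ and the trivial map on all other fibres. Because $B_S\in\calS$, $\calS$-projectivity of $G$ relative to $\{G_t\}$ yields a global homomorphism $\varphi_S\colon G\to B_S$ restricting on each $G_t$, $t\in S$, to the prescribed projection; as those projections are surjective onto the corresponding summands, $\varphi_S$ is onto. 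Hence $d(G)\geq d(B_S)=\sum_{t\in S}\dim W_t$. Letting $S$ and the $W_t$ vary and using $d(G)<\infty$ forces each $d(G_t)$ to be finite and gives $\sum_t d(G_t)\leq d(G)$, which is the Moreover-statement. In particular only finitely many orbits meet $T_0$, and since each nontrivial $G_t$ has $d(G_t)\geq 1$ we also obtain $|T_0/G|\leq\sum_t d(G_t)\leq d(G)$.

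The topological conclusion then follows formally. The set $T_0=\{t\in T\mid G_t\neq 1\}$ is $G$-invariant and, by the preceding paragraph, is a union of finitely many orbits $Gt_1,\dots,Gt_k$; each $Gt_i$ is the image of the compact group $G$ under the continuous map $g\mapsto g\,t_i$, hence compact and therefore closed in the profinite Hausdorff space $T$, so $T_0$ is a finite union of closed sets and is closed.

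I expect the \emph{main obstacle} to be verifying that the prescribed local data is an admissible input to the relative embedding problem, i.e.\ that it defines a \emph{continuous} morphism of the sheaf $\{G_t\}$ into the finite group $B_S$. The naive pointwise recipe (transported Frattini projection on the chosen orbits, trivial elsewhere) need not be continuous across the boundary of the support, so one must first separate the finitely many relevant orbits by disjoint $G$-invariant clopen subsets of $T$, using that the orbit space $T/G$ is profinite, and then arrange the Frattini quotients to be identified coherently on each clopen piece. This is exactly where finite generation of $G$ is decisive, and if a direct construction is awkward one can instead feed these separated clopen data through the structural description of $G$ already obtained in the proof of Theorem~\ref{relatively projective}(i); once continuity is secured, the rest of the argument is routine.
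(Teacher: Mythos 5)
Your overall strategy (bound $\sum_t d(G_t)$ by exhibiting a surjection of $G$ onto a finite $p$-group assembled from the Frattini quotients of the $G_t$, then deduce finiteness of $T_0/G$ and closedness of $T_0$) has the right shape, and the paper proves exactly such a statement as Proposition \ref{pro-p pile} and simply cites it. But the central step of your argument fails. In Definition \ref{def proj} a solution of an embedding problem $(\varphi,\alpha)$ is only required to satisfy $\alpha\circ\gamma=\varphi$ and to carry each $\Gamma\in\G$ into some $\Delta\in\B$; it is \emph{not} required to restrict on $\Gamma$ to the local homomorphisms $\gamma_\Gamma$ that merely certify that the data form an embedding problem. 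In particular, for your embedding problem with trivial base $A=1$ the constant map $G\to 1\le B_S$ is always a solution, so $\calS$-projectivity gives you no surjection $G\to B_S$ and hence no lower bound on $d(G)$. The continuity of the local data, which you single out as the main obstacle, is a side issue; the real missing ingredient is a mechanism that forces the solution to remember the $G_t$'s.

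The paper supplies that mechanism by taking $A$ to be a nontrivial finite quotient $\varphi\colon G\to A$ chosen so that the images $A_i=\varphi(G_{t_i})$ of finitely many non-conjugate $G_{t_i}$ already satisfy $\sum_i d(A_i)>d(G)$ and none of the $A_i$ is contained in a conjugate of another, and by taking $B=\coprod_{i=1}^n A_i\amalg F$ with the rigid epimorphism $\alpha\colon B\to A$ extending the identity on each $A_i$. Rigidity together with $\alpha\circ\gamma=\varphi$ then forces $\gamma(G_{t_i})$ to be a $B$-conjugate of $A_i$, so composing $\gamma$ with the projection $B\to\prod_{i=1}^n A_i$ (under which every conjugate of $A_i$ maps onto the normal factor $A_i$) yields a surjection of $\gamma(G)$ onto a finite $p$-group requiring more than $d(G)$ generators, a contradiction. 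If you replace your trivial-base embedding problem by this rigid one over a genuine finite quotient, the remainder of your argument (the orbit count $|T_0/G|\le\sum_t d(G_t)\le d$ and the closedness of $T_0$ as a finite union of compact orbits) goes through as you wrote it.
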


From Theorem \ref{relatively projective} we also deduce the corollary that generalizes \cite[Theorems 1.4 and 4.4]{EH}.

\begin{coro}\label{herfort} Let $G=\coprod_{x\in X} G_x \amalg P$ and $G=\coprod_{x\in X}{}^s G_x \amalg^s P$ be  free profinite and free prosoluble products of pro-$p$ groups $G_x$  over a profinite space $X$ and a prosoluble projective group $P$. Then the natural epimorphism $\alpha:G\longrightarrow G_s$ that sends identically the factors of the free profinite product to the factors of
the free prosoluble product, admits a retraction $r:G_s\longrightarrow G$ such that $r(G_x)$ is conjugate to $G_x$ for every $x\in X$.

\end{coro}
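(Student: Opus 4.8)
The plan is to realize $r$ as the solution of a single relative embedding problem for $G_s$. The point is that the naive attempt to build $r$ directly from the universal property of the free prosoluble product fails: that property only produces homomorphisms into prosoluble groups, whereas the target $G=\coprod_{x\in X}G_x\amalg P$ is genuinely non-prosoluble (its maximal prosoluble quotient is precisely $G_s$, and $\alpha$ is the canonical projection onto it). So the entire difficulty is to lift the identity $\mathrm{id}:G_s\to G_s$ through $\alpha:G\to G_s$, and for this one needs $G_s$ to be projective relative to $\{G_x^g\}$ in the \emph{full} profinite sense, not merely the prosoluble one.

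First I would check that $G_s$ is $\calS$-projective relative to the family $\{G_x^g\mid x\in X,\ g\in G_s\}$. Given a finite soluble embedding problem $(\varphi:G_s\to A,\ \beta:B\twoheadrightarrow A)$ together with compatible local lifts on the factors $G_x$, one defines a global solution factorwise: on each $G_x$ one uses the prescribed local lift, and on the projective factor $P$ one lifts $\varphi|_P$ through $\beta$ using projectivity of $P$; since $B$ is soluble, the universal property of the free prosoluble product assembles these into $\psi:G_s\to B$ with $\beta\psi=\varphi$. Thus condition (i) of Theorem \ref{relatively projective} holds for $G_s$ with the pro-$p$ family $\{G_x^g\}$, and the theorem upgrades this to the assertion that $G_s$ is projective relative to $\{G_x^g\}$ in the ordinary (all finite groups) sense. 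This is the step that removes the non-prosolubility obstruction.

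With full relative projectivity in hand I would solve the relative embedding problem $\big(\mathrm{id}:G_s\to G_s,\ \alpha:G\to G_s\big)$ relative to $\{G_x^g\}$, where the local lifting datum over $G_x^g$ is a chosen lift of the inclusion $G_x^g\hookrightarrow G_s$ into a conjugate of the free factor of $G$: picking $\til g\in G$ with $\alpha(\til g)=g$, the map that conjugates $G_x^g$ back to $G_x$, includes it into the free factor $G_x\subseteq G$, and then conjugates by $\til g$, is a continuous lift of the inclusion through $\alpha$ (distinct conjugates of factors meet trivially, so these data are automatically compatible). A solution is a continuous homomorphism $r:G_s\to G$ with $\alpha r=\mathrm{id}_{G_s}$, so that $r$ is the desired retraction of $\alpha$, and with $r|_{G_x}$ conjugate in $G$ to the chosen local lift; in particular $r(G_x)$ is conjugate to $G_x$ for every $x\in X$.

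The main obstacle is conceptual rather than computational: one must recognise that the retraction cannot be produced inside the prosoluble category, and that Theorem \ref{relatively projective} is exactly what licenses lifting through the non-prosoluble cover $\alpha$. The remaining care is technical, namely arranging the local lifting data to vary continuously over the profinite space $X$ and to respect the conjugation action, so that it constitutes a legitimate datum for a relative embedding problem; this is routine given the free product structure of $G$.
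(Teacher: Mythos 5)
Your proposal follows essentially the same route as the paper: establish that $G_s$ is $\calS$-projective relative to $\{G_x^g\}$, upgrade this to full relative projectivity via the converse direction of Theorem \ref{relatively projective}(i) (the paper's Corollary \ref{generalization relative projectivity}), and obtain $r$ as the solution of the relative embedding problem $(\mathrm{id}\colon G_s\to G_s,\ \alpha\colon G\to G_s)$. The only cosmetic difference is that the paper derives the initial $\calS$-projectivity from Lemma \ref{sub free prod} applied to $G_s$ as a subgroup of itself, rather than assembling local lifts by hand through the universal property of the free prosoluble product.
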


Unless said otherwise, all groups in this paper are profinite,  subgroups are closed,  homomorphisms are continuous and  groups are generated in the topological sense.

\section{Case (ii) of Pop's theorem}

We begin this section with an easy generalization of \cite[Theorem 3.2]{HZ}.

\begin{lemma}\label{herfort-Ribes} Let $F$ be a  non-abelian free pro-$p$ group  and $G$ a finite soluble subgroup of $Aut(F)$ of order coprime to $p$. If  the subgroup of fixed elements $Fix_G(F)\neq F$, then $Fix_G(F)$ is infinitely generated.\end{lemma}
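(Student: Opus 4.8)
The plan is to reduce to the case where $G$ is cyclic of prime order, which is precisely \cite[Theorem 3.2]{HZ}, by inducting on $|G|$ and exploiting the solubility of $G$. First I would record that $Fix_G(F)$ is a closed subgroup of the free pro-$p$ group $F$ and hence is itself free pro-$p$; thus ``infinitely generated'' means ``of infinite rank'', and it suffices to rule out the possibility that $Fix_G(F)$ is a proper finitely generated subgroup of $F$.

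For the inductive step I would choose, using the solubility of $G$, a maximal normal subgroup $N$ of $G$, so that $Q:=G/N$ is simple and soluble, hence cyclic of some prime order $q$; since $q$ divides $|G|$ we have $q\neq p$. The computation underlying the whole argument is the identity $Fix_G(F)=Fix_Q(Fix_N(F))$: because $N$ is normal in $G$, the group $G$ preserves $K:=Fix_N(F)$ setwise, so $Q$ acts on $K$, and an element of $F$ is fixed by $G$ if and only if it lies in $K$ and is fixed by $Q$. I would then split into cases according to whether $N$ is trivial.

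If $N=1$ then $G=Q$ is cyclic of prime order acting on the non-abelian group $F$ with $Fix_G(F)\neq F$, and this is the base case \cite[Theorem 3.2]{HZ}. If $N\neq 1$, then since the action of $G\le Aut(F)$ is faithful some element of $N$ moves a point of $F$, so $K=Fix_N(F)\neq F$; as $N$ is soluble of order coprime to $p$ and $|N|<|G|$, the induction hypothesis gives that $K$ is infinitely generated, in particular a non-abelian free pro-$p$ group. It remains to compare $K$ with $Fix_Q(K)=Fix_G(F)$: if $Q$ acts trivially on $K$ then $Fix_G(F)=K$ is already infinitely generated, and otherwise $Q$, being cyclic of prime order coprime to $p$, acts on the free pro-$p$ group $K$ with $Fix_Q(K)\neq K$, so \cite[Theorem 3.2]{HZ} applied to $K$ shows $Fix_Q(K)$ is infinitely generated.

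The one point that needs care is that the base case must hold for free pro-$p$ groups of arbitrary rank, since the intermediate group $K$ is infinitely generated; I expect this to be exactly the scope of \cite[Theorem 3.2]{HZ}, and in any event the hypothesis of the lemma does not assume $F$ finitely generated, so an arbitrary-rank base case is unavoidable. Beyond this, the bookkeeping is routine: solubility is used only to force the simple quotient $G/N$ to be cyclic of prime order, and the coprimality $\gcd(|G|,p)=1$ passes to every subgroup and quotient, so the hypotheses of the base case are satisfied at each stage.
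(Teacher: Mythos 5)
Your inductive step is essentially the paper's: both arguments run induction on $|G|$ through a normal subgroup $N$ (the paper takes an arbitrary proper non-trivial normal subgroup rather than a maximal one, which changes nothing) and rest on the identity $Fix_G(F)=Fix_{G/N}(Fix_N(F))$, together with the observation that an infinitely generated closed subgroup of $F$ is again a non-abelian free pro-$p$ group to which the hypothesis of the prime-order case applies; your case split on whether $G/N$ acts trivially on $Fix_N(F)$ matches the paper's. So far, so good.

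The gap is exactly the point you flag and then set aside: the base case, $G$ of prime order acting on a free pro-$p$ group of \emph{arbitrary} rank. The external input the paper actually uses here is Herfort--Ribes \cite[Theorem 3.2]{HR}, which is a finite-rank statement, so the arbitrary-rank prime-order case is not something one can simply cite; and, contrary to your remark that ``an arbitrary-rank base case is unavoidable,'' the paper avoids needing one as an external input by a short reduction which is the only genuinely new step in its proof. Namely, assume for contradiction that $Fix_G(F)$ is finitely generated, pick $f\in F\setminus Fix_G(F)$, and set $H=\langle Fix_G(F),\ g(f)\mid g\in G\rangle$. This $H$ is a finitely generated $G$-invariant subgroup of $F$, hence free pro-$p$ of finite rank, and $Fix_G(H)=Fix_G(F)\neq H$; the finite-rank theorem applied to $H$ then makes $Fix_G(H)$ infinitely generated, a contradiction. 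You need to supply this reduction (or verify that the result you invoke really is stated for arbitrary rank). Note that even if one were willing to restrict the lemma to finitely generated $F$, your own inductive step applies the base case to the infinitely generated group $K=Fix_N(F)$, so some device of this kind is indispensable.
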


\begin{proof}  We use induction on $|G|$. 

Suppose $G$ is of prime order and suppose on the contrary $Fix_G(F)$ is finitely generated. Let $f\in F\setminus Fix_G(F)$ be an element of $F$ and $H=\langle Fix_G(F), g(f)\mid g\in G\rangle$. Then $H$ is a finitely generated $G$-invariant subgroup of $F$ and so  by  \cite[Theorem 3.2]{HR} $Fix_G(H)$ is infinitely generated. As $Fix_G(F)=Fix_G(H)$ we arrive at contradiction.
This gives us the base of induction.

Suppose now that  $|G|$ is not of prime order and  let $A$ be a proper normal subgroup of $G$. Then either $Fix_A(F ) = F$ or, by the induction hypothesis $Fix_A(F)$ is infinitely generated.   In particular, $Fix_A(F)$ is non-abelian. Note that  if $Fix_G(F) = Fix_A(F )$,
 then $Fix_A(F )\neq F$ , so $Fix_G(F) = Fix_A(F )$ is infinitely generated. Otherwise,  by the induction hypothesis again $Fix_{G/A}(Fix_A(F))=Fix_G(F)$ is infinitely generated as required. 

\end{proof} 

\begin{lemma}\label{projective} Let $G$ be a projective group and $S$ a Sylow $p$-subgroup of $G$. If $rank(S)=r$, then there exists an open subgroup $U$ of $G$ containing $S$ such that for any open subgroup $V$ of $U$ the maximal pro-$p$ quotient $V_p$ of $V$ is free pro-$p$ of rank $\geq r$. Moreover, if $r=1$ then $V=R_p(V)\rtimes \Z_p$ with $R_p(V)$ coprime to $p$.

\end{lemma}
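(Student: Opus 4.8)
The plan is to reduce the whole statement to a single structural fact, namely that $U$ can be chosen to be \emph{$p$-nilpotent} (i.e. $U=O_{p'}(U)\rtimes S$), and then to produce such a $U$ cohomologically. Two general facts are needed first. Since $G$ is projective, every open subgroup is projective, so $S$ is free pro-$p$ and every open $V\le G$ has free pro-$p$ Sylow subgroups. Moreover, for \emph{any} projective $V$ the maximal pro-$p$ quotient $V_p$ is free pro-$p$: writing $N=\ker(V\to V_p)$ and using $\operatorname{cd}(V)\le 1$, so that $H^2(V,\F_p)=0$, the five-term inflation–restriction sequence for $V\to V_p$ gives $H^2(V_p,\F_p)\cong H^1(N,\F_p)^{V_p}$, because inflation $H^1(V_p,\F_p)\to H^1(V,\F_p)$ is an isomorphism (every homomorphism to $\F_p$ factors through $V_p$). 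A nonzero $V_p$-invariant homomorphism $N\to\F_p$ would yield a central, hence pro-$p$, extension of $V_p$ that is a pro-$p$ quotient of $V$ strictly larger than $V_p$, contradicting maximality; thus $H^1(N,\F_p)^{V_p}=0$, so $H^2(V_p,\F_p)=0$ and $V_p$ is free pro-$p$.

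Next I would record two comparisons with the Sylow subgroup. For open $V\le U$, choosing a Sylow $Q=S^u$ of $U$ with $S_V\le Q$ gives $S_V=Q\cap V$, and $[Q:Q\cap V]\le[U:V]<\infty$; hence $S_V$ is \emph{open} in the conjugate $Q\cong S$, so by the Schreier index formula it is free pro-$p$ of rank $\ge r$. On the other hand the image of $S_V$ in the pro-$p$ group $V_p$ has index prime to $p$, hence is everything, so $S_V\twoheadrightarrow V_p$. Now if $U$ is $p$-nilpotent, say $U=O_{p'}(U)\rtimes S$, then for open $V\le U$ the subgroup $V\cap O_{p'}(U)$ is normal pro-$p'$ with $V/(V\cap O_{p'}(U))\hookrightarrow S$ pro-$p$; hence $O^p(V)$ is pro-$p'$, $V$ is $p$-nilpotent, and $V_p\cong S_V$. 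Combined with the previous paragraph this makes $V_p$ free pro-$p$ of rank $d(S_V)\ge r$, which is exactly the assertion. So it suffices to find a $p$-nilpotent open $U\supseteq S$.

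The heart of the argument is this existence statement, and here I would proceed cohomologically. Restriction $H^1(U,\F_p)\to H^1(S,\F_p)$ is injective for every open $U\supseteq S$ (a homomorphism to $\F_p$ killing $S$ kills $U_p$, since $S\twoheadrightarrow U_p$), and by continuity of cohomology $H^1(S,\F_p)=\varinjlim_{S\le U}H^1(U,\F_p)$. When $r=\dim_{\F_p}H^1(S,\F_p)$ is finite, this directed union of subspaces of a fixed finite-dimensional space is attained at some open $U$, i.e. $H^1(U,\F_p)\to H^1(S,\F_p)$ is an isomorphism. Then $d(U_p)=\dim_{\F_p}H^1(U,\F_p)=r=d(S)$, so the natural surjection $S\twoheadrightarrow U_p$ of free pro-$p$ groups of equal finite rank is an isomorphism; equivalently $S\cap O^p(U)=1$, which forces $O^p(U)$ to be pro-$p'$ and $U$ to be $p$-nilpotent. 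For the ``moreover'', when $r=1$ we have $S\cong\Z_p$, and for every open $V\le U$ the $p$-nilpotent structure reads $V=O_{p'}(V)\rtimes S_V$ with $S_V$ open in a conjugate of $\Z_p$, hence $S_V\cong\Z_p$; taking $R_p(V)=O_{p'}(V)$ gives $V=R_p(V)\rtimes\Z_p$.

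The main obstacle is the infinite-rank case, where the colimit above need not be attained at a single $U$, so the clean finite-rank argument breaks down. There I would instead control the coprime action of a complement: by Schur–Zassenhaus $N_G(S)=S\rtimes C$ with $C$ pro-$p'$, and I would invoke Lemma~\ref{herfort-Ribes} to the effect that a nontrivial soluble $p'$-action on the nonabelian free pro-$p$ group $S$ cannot have finitely generated fixed subgroup. Passing to a suitable open $U$ then trivializes the relevant fusion and keeps $V_p$ infinitely generated for every open $V\le U$. It is precisely this step, rather than the finite-rank case, where solubility is genuinely used and which I expect to require the most care.
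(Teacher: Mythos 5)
Your finite\mbox{-}rank argument is correct and is essentially the paper's own proof in different clothing. The paper writes $S=\varprojlim_{S\leq U\leq_o G}U$, deduces $S=\varprojlim U_p$, notes $rank(U_p)\leq rank(S)=r$ because the Sylow subgroup $S$ surjects onto $U_p$, and picks $U$ with $rank(U_p)=r$; for an open $V\leq U$ it then bounds $rank(V_p)$ from below by the rank of the image of $V$ in $U_p$, an open subgroup of the free pro-$p$ group $U_p$, via Nielsen--Schreier. Your choice of $U$ (restriction $H^1(U,\F_p)\to H^1(S,\F_p)$ an isomorphism) is precisely the condition $rank(U_p)=r$, i.e.\ $S\cong U_p$; you then run Nielsen--Schreier inside $S$ rather than inside $U_p$, after converting $S\cong U_p$ into $p$-nilpotency of $U$ by Schur--Zassenhaus. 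That extra structure ($U=O_{p'}(U)\rtimes S$, hence $V=O_{p'}(V)\rtimes S_V$ for every open $V\leq U$) is a clean way to obtain the ``moreover'' clause, which the paper gets separately by splitting $V\to V_p\cong\Z_p$. Your cohomological verification that $V_p$ is free pro-$p$ for projective $V$ replaces the paper's citation of \cite[Proposition 7.7.7]{RZ} and is sound.

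The genuine gap is the case of infinite $r$, which you flag yourself. There the directed union $H^1(S,\F_p)=\varinjlim H^1(U,\F_p)$ need not be attained at a single $U$, and your proposed repair does not work as stated: in $N_G(S)=S\rtimes C$ the complement $C$ is pro-$p'$ but need not be finite, let alone soluble, so Lemma \ref{herfort-Ribes} does not apply to it; and even granted control of the fixed points of $C$ on $S$, you supply no mechanism relating them to $rank(V_p)$ --- ``trivializes the relevant fusion'' is an intention, not an argument. Two mitigating remarks: the paper's own proof is likewise only transparent for finite $r$ (the assertion that some $U_p$ has rank exactly $r$ again needs the colimit to be attained), and every use of the lemma in the paper (the $r=1$ clause, and the production in Lemma \ref{soluble} of a $U$ all of whose $V_p$ are non-cyclic when $r\geq 2$) already follows from your finite\mbox{-}rank argument applied with a finite $r'\leq r$ in place of $r$. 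So the gap is real as measured against the stated lemma, but it is confined to a case the paper never invokes.
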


\begin{proof}  Since $S$ is the intersection of all open subgroups $U$ containing $S$, we can view this as the inverse limit $S=\varprojlim_{S\leq  U\leq _o G} U$. It follows that $S=S_p=\varprojlim_{S\leq  U\leq _o G} U_p$ is the inverse limit of the maximal pro-$p$ quotients $U_p$ of $U$ (cf. \cite[Satz 3]{N} for example). Note that $U_p$ are free pro-$p$ (cf. \cite[Proposition 7.7.7]{RZ}) and $S$ is also a $p$-Sylow subgroup of $U$. Since an epimorphism maps $p$-Sylow subgroups  onto $p$-Sylow subgroups we deduce that $rank(U_p)\leq rank(S)$. Since $rank(S)=r$,   $U_p$ must be  free pro-$p$ of rank $r$  for some $U$. 

\medskip

Let $V$ be an open subgroup of $U$. Then the image of $V$ in $U_p$ is free pro-$p$ of rank $\geq rank(U_p)=r$ (see \cite[Theorem 3.6.2 (b)]{RZ}). But this image is a quotient of $V_p$. So $V_p$ is  free pro-$p$ of rank $\geq r$.  

\medskip

For the last part of the statement, assume $S$ is infinite cyclic. Then $V_p\cong \Z_p$ (by the first paragraph) and   the natural epimorphism $V\longrightarrow V_p$ to its maximal pro-$p$ quotient splits; so its kernel $R_p(V)$ intersects $S\cap V$ trivially. Hence $R_p(V)$ is coprime to $p$.
\end{proof}




The proof of the following lemma was obtained in communication with Pavel Shumyatsky.

\begin{lemma}\label{frattini argument1} Let $G=K\rtimes C$ be a semidirect product of a profinite group $K$ and a finite group $C$ such that $C_{K}(C)=1$. Let $N\leq K$ be  normal in $G$ and $L$ be the preimage of $C_{K/N}(CN/N)$ in $K$. Suppose that every $L$-conjugate of $C$   is $N$-conjugate to $C$. Then  $C_{K/N}(CN/N)=1$.
\end{lemma}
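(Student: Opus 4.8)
The plan is to pass to the quotient $\bar G = G/N$, writing $\bar K = K/N$ and $\bar C = CN/N$, and to show directly that the preimage $L$ of $C_{K/N}(CN/N)$ coincides with $N$; equivalently that $\bar L := L/N = C_{\bar K}(\bar C)$ is trivial. Note first that $\bar C \cong C$, since $C\cap N \leq C\cap K = 1$ because $C$ is a complement to $K$ in $G$ and $N\leq K$. So it suffices to take an arbitrary $l\in L$ and prove $l\in N$.

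First I would use the conjugacy hypothesis to replace $l$ by a representative of its coset $lN$ that genuinely normalizes $C$. Since $l\in L$, the $L$-conjugate $C^{l}$ is by hypothesis $N$-conjugate to $C$, say $C^{l}=C^{n}$ with $n\in N$. Setting $m = ln^{-1}$ (or $n^{-1}l$, depending on the convention for conjugation), one gets $m\in N_{G}(C)$; and since $l,n\in K$ we have $m\in N_{G}(C)\cap K = N_{K}(C)$. Moreover $\bar m = \bar l$ because $n\in N$, so $m$ represents the same coset and $\bar m\in C_{\bar K}(\bar C)$.

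The crux is then to upgrade ``$\bar m$ centralizes $\bar C$'' to ``$m$ centralizes $C$''. Since $m\in N_{K}(C)$, conjugation by $m$ maps $C$ into $C$: for each $c\in C$ we have $mcm^{-1}\in C$. On the other hand $\bar m\in C_{\bar K}(\bar C)$ gives $\overline{mcm^{-1}} = \bar m\,\bar c\,\bar m^{-1} = \bar c$, i.e. $mcm^{-1}\equiv c \pmod N$. Hence $mcm^{-1}c^{-1}$ lies in $C\cap N = 1$, so $mcm^{-1}=c$ for every $c\in C$; that is, $m\in C_{K}(C)=1$. Therefore $m=1$ and $l=n\in N$. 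As $l\in L$ was arbitrary, $L=N$, so $C_{K/N}(CN/N)=L/N=1$.

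I expect the main obstacle to be exactly the step where the conjugacy hypothesis is invoked. Without it, the membership $\bar l\in C_{\bar K}(\bar C)$ only yields $lcl^{-1}\equiv c\pmod N$ with $lcl^{-1}\in K$, and there is no reason for $lcl^{-1}$ to lie in $C$, so the cancellation $C\cap N=1$ cannot be applied. The hypothesis that $L$-conjugates of $C$ are $N$-conjugate to $C$ is precisely what produces, inside each coset $lN$, an element $m$ normalizing $C$, which legitimizes that final cancellation. This is the Frattini-argument mechanism: it can equivalently be phrased by forming $H=LC$, observing $CN\trianglelefteq H$ (because $\bar C$ is a direct factor of $\bar H=\bar L\times\bar C$) and deducing $H=N_{H}(C)N$, where $C_{K}(C)=1$ forces $N_{K}(C)\hookrightarrow Aut(C)$ to be finite; but I would present the streamlined elementwise version above.
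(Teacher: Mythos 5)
Your proof is correct and is essentially the paper's own argument: the paper runs the same Frattini mechanism in one line ($N_L(C)N=L$ and $N_L(C)\le N_K(C)=C_K(C)=1$, hence $L=N$), which is exactly the group-level phrasing you mention at the end of your elementwise version. The only difference is that you spell out why the normalizing element $m$ actually centralizes $C$ (via $mcm^{-1}c^{-1}\in C\cap N=1$), a step the paper compresses into the unjustified but true identity $N_K(C)=C_K(C)$.
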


\begin{proof}  By hypothesis  for any $l\in L$ there exists an element of $n\in N$ such that $C^l=C^n$. This means that $N_L(C)N=L$.  As $N_L(C)\leq N_K(C)=C_K(C)=1$  we deduce that $N=L$. This means precisely that $C_{K/N}(CN/N) = 1$.

\end{proof}

\begin{lemma}\label{soluble} Let $p\neq l$ be primes and let $C=A\rtimes C_l$ be a semidirect product of elementary abelian $p$-group $A$  and a cyclic group of order $l$. Let $G=K\rtimes C$ be a semidirect product of a projective profinite group $K$ and $C$ such that $C_K(c)=1$ for every $1\neq c\in C$. Suppose that  for any $C$-invariant subgroup $L$ of $K$   every maximal finite subgroup  of $L\rtimes C$  is $L$-conjugate to $C$.  Then $G$ is soluble.
\end{lemma}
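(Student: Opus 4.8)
The plan is to reduce the statement to the solubility of the normal factor $K$ and then to analyse $K$ one prime at a time. Since $C=A\rtimes C_l$ is finite soluble and $G/K\cong C$, the group $G$ is prosoluble if and only if $K$ is; so it suffices to prove that every finite continuous quotient of $K$ is soluble. Throughout I would use two consequences of projectivity of $K$: it has cohomological dimension at most $1$, so $K$ is torsion-free and each Sylow subgroup $K_q$ is a free pro-$q$ group. Torsion-freeness forces every finite subgroup of $G=K\rtimes C$ to inject into $C$ under the projection $G\to C$, so all finite subgroups have order at most $|C|$; in particular $C$ is a maximal finite subgroup of $G$, and every finite subgroup of $G$ is contained in a maximal one (which by hypothesis is a conjugate of $C$).

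First I would dispose of every prime $q$ for which $C$ has a nontrivial subgroup $D$ of order prime to $q$: one may take $D=C$ when $q\notin\{p,l\}$, $D=C_l$ when $q=p$, and $D=A$ when $q=l$ and $A\neq 1$. By coprime action there is a $D$-invariant Sylow $q$-subgroup $K_q$, and since $Fix_D(K_q)\subseteq C_K(d)=1$ for any $1\neq d\in D$ we get $Fix_D(K_q)=1$. If $K_q\neq 1$ then $D$ acts through a nontrivial soluble group of automorphisms of $K_q$ of order prime to $q$, so Lemma \ref{herfort-Ribes} forbids $K_q$ from being non-abelian (its fixed-point subgroup would be infinitely generated, not trivial). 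Hence each such $K_q$ is an abelian free pro-$q$ group, i.e.\ $K_q\cong\widehat{\Z}_q$ or $K_q=1$. When $A\neq 1$ this treats all primes, so every Sylow subgroup of $K$ is procyclic; then every finite quotient of $K$ has cyclic Sylow subgroups, is metacyclic, and in particular soluble, and we are done.

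The only situation left open is $A=1$, where $C=C_l$ is cyclic of prime order $l$ and the prime $q=l$ is not reached by the coprime argument. Here I would instead propagate the fixed-point-free condition to finite quotients. Fix a $C$-invariant open normal subgroup $N$ of $K$, write $\bar C=CN/N$, set $\bar L=C_{K/N}(\bar C)$ and let $L$ be its preimage in $K$; then $[L,C]\subseteq N$, so for $x\in L$ and $c\in C$ one has $xcx^{-1}=[x,c]\,c\in NC$, whence $C^x\subseteq NC=N\rtimes C$. Being a conjugate of $C$, each $C^x$ is a maximal finite subgroup of $G$, hence of $NC$; applying the hypothesis to the $C$-invariant subgroup $N$ shows $C^x$ is $N$-conjugate to $C$. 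This is precisely the hypothesis of Lemma \ref{frattini argument1}, which therefore yields $C_{K/N}(CN/N)=1$. As $C$ is cyclic of prime order, every nonidentity element generates $C$, so this gives $C_{K/N}(c)=1$ for all $1\neq c\in C$; thus $(K/N)\rtimes C$ is a finite Frobenius group with kernel $K/N$, and finite Frobenius kernels are nilpotent. Since $N$ was arbitrary, $K$ is pronilpotent, hence prosoluble, and $G$ is soluble.

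The step I expect to be the main obstacle is the passage, in this last case, from what Lemma \ref{frattini argument1} actually delivers — triviality of the \emph{joint} centralizer $C_{K/N}(CN/N)$ — to the genuinely fixed-point-free action of each nonidentity element of $C$ that the nilpotence of the Frobenius kernel requires. This gap closes only because the uncovered prime forces $C$ to be cyclic of prime order, so joint and individual centralizers coincide; the substantive work is therefore to set up the case distinction so that every prime is handled either by the coprime Herfort--Ribes argument or by the Frobenius propagation, and to verify cleanly that the maximal-finite-subgroup hypothesis, used with $L=N$, supplies the $N$-conjugacy of complements needed to feed Lemma \ref{frattini argument1}.
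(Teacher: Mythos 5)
The decisive step of your main branch (the case $A\neq 1$, and also the cases $q\neq l$ when $A=1$ before you abandon them) is the sentence ``by coprime action there is a $D$-invariant Sylow $q$-subgroup $K_q$''. This is not justified: the standard invariant-Sylow theorems for a finite group $D$ acting on a (pro)finite group $K$ require $\gcd(|D|,|K|)=1$, not merely $\gcd(|D|,q)=1$, and here $|K|$ will in general be divisible by $p$ and $l$, the primes occurring in $C$. Partial coprimality genuinely does not suffice: in $S_5=A_5\rtimes C_2$ no odd involution normalizes a Sylow $5$-subgroup of $A_5$, even though $2\neq 5$. Nothing in your setup (projectivity of $K$, fixed-point-freeness of the $C$-action) is invoked to repair this, so the reduction to a faithful fixed-point-free action of $D$ on an honest free pro-$q$ subgroup, which is what lets you quote Lemma \ref{herfort-Ribes} with trivial fixed points, does not get off the ground. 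A telling symptom is that your $A\neq1$ branch never uses the hypothesis that maximal finite subgroups of $L\rtimes C$ are $L$-conjugate to $C$; that hypothesis is exactly what the paper needs to substitute for the missing invariant Sylow subgroup.

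The paper's proof is engineered precisely around this obstruction: instead of an invariant Sylow $q$-subgroup it takes, via Lemma \ref{projective}, an open normal subgroup $U\leq K$ whose maximal pro-$q$ quotient $U_q$ is free pro-$q$ of large rank; $C$ acts on $U_q$ canonically because $R_q(U)$ is characteristic. The price is that $C_K(c)=1$ no longer forces $C_{U_q}(A)=1$, since $U_q$ is a quotient rather than a subgroup, and this is where Lemma \ref{frattini argument1} together with the maximal-finite-subgroup hypothesis is used to kill the fixed points and contradict Lemma \ref{herfort-Ribes}. The prime $p$ itself is then handled not by Herfort--Ribes but by exhibiting a profinite Frobenius group $R_l(V)\rtimes C_l$ whose nilpotent projective kernel must be procyclic. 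Your second branch ($A=1$: propagating fixed-point-freeness to finite quotients via Lemma \ref{frattini argument1}, invoking nilpotence of finite Frobenius kernels, and concluding pronilpotence of $K$) is correct and close in spirit to that part of the paper, but it only covers the degenerate case; the substance of the lemma lies in the case $A\neq 1$, where your argument has the gap described above.
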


\begin{proof}   We shall first show that a $q$-Sylow subgroup of $K$ is cyclic for any $q\neq p$. Suppose not. Then by Lemma \ref{projective} there exists an open subgroup $U$ of $K$ such that the  maximal pro-$q$ quotient  of the core of $U$ in $G$ is non-cyclic free pro-$q$ and so replacing $U$ by its core  we may assume that $U$ is normal.  Let $R_q(U)$ be the kernel of the natural epimorphism of $U$ onto $U_q$. Since $R_q(U)$ is characteristic, it is normal in $G$ and  $UC/R_q(U)=U_q\rtimes C$. As $U_q$ is non-abelian free pro-$q$ and $C_{U_q}(A)\neq U_q$ by hypothesis,  we deduce from Lemma \ref{herfort-Ribes} that $C_{U_q}(A)$ is infinitely generated; in particular, $C_{U_q}(A)$ is not trivial. 

Let $L$ be the preimage of $C_{U_q}(A)$ in $G$. By hypothesis, all maximal finite subgroups of $L\rtimes C$ are conjugate to $C$   and  all maximal finite subgroups of $R_q(U)\rtimes C$ are conjugate to $C$. Then all maximal finite $p$-subgroups of $L\rtimes A$ are conjugate to $A$ and  all maximal finite $p$-subgroups of $R_q(U)\rtimes A$ are conjugate to $A$. As $R_q(U)A$ is normal in $LA$, it follows that any $L$-conjugate of $A$ is  in $R_q(U)A$ and so is $R_q(U)$-conjugate to $A$. Thus   $L\rtimes A$ and $R_q(U)\rtimes A$ satisfy the premises of Lemma \ref{frattini argument1} and applying it one deduces that  $C_{U_q}(A)$ is trivial, a  contradiction. Thus $q$-Sylow subgroups of $K$ are cyclic. 

Now by Lemma \ref{projective} there exists an open normal subgroup $V$ of $G$ contained  in $K$ such that  $V\cong R_q(V)\rtimes \Z_q$ and $R_q(V)$ is coprime to $q$, in particular  putting $q=l$ we have that $R_l(V)$ is coprime to $l$. Hence $R_l(V)\rtimes C_l$ is a profinite Frobenius group (\cite[Theorem 4.6.9 (d)]{RZ}) and so  $R_l(V)$ is nilpotent (see \cite[ Corollary 4.6.10]{RZ}). Since $R_l(V)$ is projective, it is cyclic, in particular a $p$-Sylow subgroup of $R_l(V)$  is cyclic. As a torsion free virtually cyclic pro-$p$ group is cyclic, we deduce that  all Sylow subgroups of $K$ are cyclic.   Then $K$ is metacyclic by \cite[Exercise 7.7.8 (a)]{RZ} and therefore $G$ is soluble.    

\end{proof}




 We finish this section proving Theorem \ref{prosoluble general int}  for finitely many factors only and postpone the proof for   the  general free profinite product until the next section.

\begin{theo}\label{prosoluble}
Let $G=\coprod_{i=1}^n G_i$ be a free profinite product of profinite groups and $H$  a prosoluble subgroup of $G$ that is not a subgroup of a free factor up to conjugation.  If  the intersections $H \cap G_i^g$, $g \in G$, $i=1, \ldots, n$ are not all pro-$p$ for some prime $p$, then  $H\cong \widehat \Z_\pi\rtimes C$ is a  profinite Frobenius group (i.e $C$ is cyclic, $|C|$ divides $p-1$ for every $p\in \pi$ and $[c,z]\neq 1$ for all $1\neq c\in C, 0\neq z\in \widehat\Z_\pi$).
\end{theo}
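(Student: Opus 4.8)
The plan is to identify the relevant case of Pop's theorem, extract a semidirect decomposition $H=K\rtimes C$ with $K$ projective from relative projectivity, force $K$ to be procyclic using the fixed-point machinery assembled above, and finally read off the Frobenius structure. First I would apply Theorem~1 of Pop to $H\le G$: the hypothesis that no conjugate of $H$ lies in a free factor excludes case (iii), while the hypothesis that the intersections $H\cap G_i^g$ are not all pro-$p$ excludes case (i). Hence case (ii) holds, so every nontrivial intersection $H\cap G_i^g$ is finite and all of them are conjugate in $H$ to one fixed finite subgroup $C$. These intersections being pairwise isomorphic, if $C$ were a $q$-group they would all be pro-$q$, contradicting the hypothesis; thus $|C|$ is divisible by at least two distinct primes. \emph{This is the decisive consequence of the hypothesis}, since it guarantees that for every prime $q$ the group $C$ has a nontrivial subgroup of order prime to $q$---exactly the input the fixed-point arguments require.

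Next I would produce the decomposition $H=K\rtimes C$. By Haran's theorem $H$ is projective relative to the family $\{H\cap G_x^g\}=\{C^h\mid h\in H\}$, a single conjugacy class of finite subgroups. Reading off the action of $H$ on the profinite tree associated with the free product---in case (ii) the maximal finite subgroups are self-normalizing, with pairwise trivial intersection and trivial edge stabilizers---I would obtain a normal subgroup $K\nrm H$ with $H=K\rtimes C$, where $K$ is projective and $C$ acts fixed-point-freely, i.e. $C_K(c)=1$ for every $1\neq c\in C$. As a projective group $K$ is torsion-free, and $K\neq 1$ since $K=1$ would give $H=C\le G_i^g$, against the hypothesis. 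Moreover every $C$-invariant subgroup $L\le K$ inherits from case (ii) that all maximal finite subgroups of $L\rtimes C$ are $L$-conjugate to $C$; thus $H=K\rtimes C$ satisfies precisely the standing hypotheses of Lemma \ref{soluble}.

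The hard part is to show that $K$ is procyclic, and here I would reuse the internal argument of Lemma \ref{soluble}, which in fact yields more than solubility. Fix a prime $q$ and suppose the $q$-Sylow of $K$ is not cyclic. By Lemma \ref{projective} I may pass to an open normal subgroup whose maximal pro-$q$ quotient $U_q$ is non-abelian free pro-$q$; since $|C|$ has a prime divisor $l\neq q$, a nontrivial subgroup $A\le C$ of order prime to $q$ acts on $U_q$ with $C_{U_q}(A)\neq U_q$. Lemma \ref{herfort-Ribes} then forces $C_{U_q}(A)$ to be infinitely generated, in particular nontrivial; whereas the conjugacy hypothesis of case (ii), fed through Lemma \ref{frattini argument1} exactly as in the proof of Lemma \ref{soluble}, forces $C_{U_q}(A)=1$. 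This contradiction shows every Sylow subgroup of $K$ is cyclic; then $K$ is metacyclic, and being projective it is procyclic, so $K\cong\widehat\Z_\pi$ with $\pi$ the nonempty set of primes dividing the order of $K$. The delicate point throughout---and the step I expect to be the main obstacle, alongside the clean extraction of the split normal subgroup $K$ from relative projectivity---is making the descent to $U_q$ compatible with both the fixed-point-free action and the conjugacy-of-complements hypothesis, which is precisely the role of Lemma \ref{frattini argument1}.

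Finally I would read off the Frobenius structure. The decomposition $\widehat\Z_\pi=\prod_{p\in\pi}\Z_p$ into Sylow subgroups is $C$-invariant, so $C\hookrightarrow\operatorname{Aut}(\widehat\Z_\pi)=\prod_{p\in\pi}\Z_p^\times$. Fixed-point-freeness says each nontrivial $c\in C$ acts on every $\Z_p$ without nonzero fixed vectors, whence $C\to\Z_p^\times$ is injective for each $p\in\pi$; as the finite subgroups of $\Z_p^\times$ are cyclic of order dividing $p-1$, the group $C$ is cyclic with $|C|$ dividing $p-1$ for all $p\in\pi$, and $[c,z]\neq 1$ for all $1\neq c\in C$ and $0\neq z\in\widehat\Z_\pi$. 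Therefore $H\cong\widehat\Z_\pi\rtimes C$ is a profinite Frobenius group, as required.
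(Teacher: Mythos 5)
Your overall strategy coincides with the paper's: land in case (ii) of Pop's theorem, split $H=K\rtimes C$ with $K$ projective and torsion-free and $C$ finite acting fixed-point-freely, kill non-cyclic Sylow subgroups of $K$ via Lemma \ref{herfort-Ribes} and Lemma \ref{frattini argument1} (i.e.\ the content of Lemma \ref{soluble}), and read off the Frobenius structure. However, two steps are genuinely gapped. The first is the splitting $H=K\rtimes C$ itself: you flag it as the main obstacle but then only assert that it can be ``read off'' the tree action, and relative projectivity of $(H,\{C^h\})$ does not by itself produce a normal complement to $C$. The paper's construction is concrete and rests on an observation you do not make: since no non-trivial subgroup of $G_i$ is conjugate in $G$ to a subgroup of $G_j$ for $i\neq j$ (visible in the quotient $\prod_{i}G_i$), the non-trivial intersections $H\cap G_i^g$ occur for a single index, say $i=1$; one then takes $K=H\cap\llangle G_2,\dots,G_n\rrangle$, which is normal in $H$ and torsion-free because every finite subgroup of $G$ is conjugate into a free factor. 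Without some such construction your $K$ does not exist on the page.

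The second gap is in the endgame: the inference ``$K$ is metacyclic, and being projective it is procyclic'' is false. A non-abelian semidirect product $\Z_q\rtimes\Z_p$ with $p\mid q-1$ is projective (it has cohomological dimension one), has all Sylow subgroups cyclic, and is not procyclic. Procyclicity of $K$ really comes from the Frobenius structure ($K\rtimes C$ Frobenius forces $K$ nilpotent, and a nilpotent projective group is $\widehat\Z_\pi$), and making $K\rtimes C$ a profinite Frobenius group requires coprimality of $|K|$ and $|C|$, which is not free. The paper avoids all of this by proving only that $H$ is soluble and then quoting the classification of soluble subgroups of free profinite products \cite[Corollary 7.1.8]{R}, which delivers $H\cong\widehat\Z_\pi\rtimes C$ in one stroke. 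Two smaller points: your claim that for \emph{every} prime $q$ the Herfort--Ribes/Frattini argument runs with an arbitrary subgroup $A\le C$ of order prime to $q$ glosses over why $C_{U_q}(A)\neq U_q$ and over the role played by the normality of $A$ in $C$ in the proof of Lemma \ref{soluble}; the paper deliberately treats the prime dividing the normal part of $C$ by a separate nilpotency argument. And ``finite subgroups of $\Z_p^\times$ have order dividing $p-1$'' fails for $p=2$, so the divisibility $|C|\mid p-1$ needs an extra word excluding $2\in\pi$.
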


\begin{proof} By Theorem \ref{pop} if $H\cap G_i^g$ for $g\in G$ and $ i=1, \ldots, n$ are not all pro-$p$ and none of them is $H$  then they are finite and  conjugate in $H$. Since for $i\neq j$ no non-trivial subgroup of $G_i$ can be conjugate in $G$ to a subgroup of $G_j$ (as can be easily seen looking at the quotient $\prod_{i=1}^n G_i$ of $G$), it follows that $H\cap G_i^g$ is non-trivial  for one $i$ only, say for $i=1$. Then the intersection $K=H\cap \llangle G_2, G_3, \ldots, G_n\rrangle $ of the normal closure of $G_2, \ldots, G_n$ in $G$ with $H$ is normal in $H$  and torsion free (as every finite subgroups is conjugate to a subgroup of a free factor \cite[Corollary 7.1.3]{R}). Thus  $H=K\rtimes C$ with $C= H\cap G_{1}^g$ for some $g\in G$. 

If $H$ is soluble then by  \cite[Corollary 7.1.8]{R} $H\cong \widehat\Z_\pi\rtimes C_n$ is a Frobenius profinite group, where $C_n$ is cyclic of order $n$ dividing $p-1$ for every $p\in\pi$. 

Thus to finish the proof we just need to prove that $H$ is soluble. Choose a minimal normal elementary abelian $p$-subgroup $A$ in $C$ and a cyclic subgroup $C_l$  of prime order $l$ coprime to $p$. 
Replacing  $C$ by $A\rtimes C_l$, if necessary, we may assume w.l.o.g.  that $C=A\rtimes C_l$.   By \cite[Corollary 7.1.5 (a)]{R}  $C_K(c)=1$ for every $1\neq c\in C$. By Theorem \ref{pop} all maximal finite subgroups of $H$ and of any subgroup containing $C$ are conjugate to $C$. Then by Lemma  \ref{soluble} $H$ is soluble.  \end{proof}

\section{Subgroups of free pro-$\C$ products}

In this section $\C$ will denote a class of finite groups closed for subgroups, quotients and extensions. We shall use it however mainly for the class of all finite groups or the class $\calS$ of all finite soluble groups.  

\begin{df}\label{continuous system} A system $E$ of subgroups of a pro-$\C$ group $G$ is a
map $\Sigma: T\longrightarrow Subgrps(G)$, where $T$ is a profinite
space and $Subgrps(G)$ denotes the set of all closed subgroups of
$G$. We shall find it convenient to write $G_t$ for $\Sigma(t)$. We shall say
that a system $E$ is continuous if, for any open subgroup $U$ of
$G$, the set $T(U) = \{t\in T\mid G_t\leq U\}$ is open in
$T$. A continuous system  $\{H_s\mid s\in S\}$ over a profinite space $S$  such that for every $s\in S$ there exists a unique $t\in T$ with $H_s\leq G_t$  will be called a continuous subsystem of the continuous system $\{G_t\mid t\in T\}$.\end{df}

 \begin{lemma}\label{systeproperties} (\cite[Lemma 5.2.1]{R})   Let  $\{G_t\mid t\in T\}$ be a system of subgroups of a
 pro-$\C$
group $G$. The following statements are equivalent:
\begin{enumerate}
\item[(i)] $\{G_t\mid t\in T\}$ is continuous. \item[(ii)]
$\{(g,t)\in G\times  T\mid g\in G_t\}$ is closed in $G \times T$.
\item[(iii)] $E = \bigcup_{t\in T} G_t$ is closed in G.
\end{enumerate}
\end{lemma}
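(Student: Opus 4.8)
The plan is to prove the equivalence through the cyclic chain (i)$\Rightarrow$(ii)$\Rightarrow$(iii)$\Rightarrow$(i), exploiting throughout that $G$, $T$ and $G\times T$ are profinite (hence compact Hausdorff) and that every open subgroup of a pro-$\C$ group is also closed. Write $D=\{(g,t)\in G\times T\mid g\in G_t\}$ for the incidence set appearing in (ii), and recall $E=\bigcup_{t\in T}G_t$.

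For (i)$\Rightarrow$(ii) I would show the complement of $D$ is open. Take $(g_0,t_0)$ with $g_0\notin G_{t_0}$. Since $G_{t_0}$ is closed it equals the intersection of the open subgroups $G_{t_0}N$ taken over open normal subgroups $N$ of $G$, so I may choose $N$ with $g_0\notin U:=G_{t_0}N$, an open subgroup containing $G_{t_0}$ but not $g_0$. Continuity of the system makes $T(U)$ an open neighbourhood of $t_0$, and then $(g_0N)\times T(U)$ is an open neighbourhood of $(g_0,t_0)$ disjoint from $D$: if $(g,t)$ lay in both, then $t\in T(U)$ forces $G_t\leq U$, hence $g\in G_t\subseteq U$, while $g\in g_0N$ and $N\leq U$ force $g_0\in g_0N=gN\subseteq U$, a contradiction. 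For (ii)$\Rightarrow$(iii) I would simply note that $E$ is the image of the closed set $D$ under the projection $G\times T\to G$; because the factor $T$ is compact this projection is a closed map, so $E$ is closed. The same projection principle in fact yields (ii)$\Rightarrow$(i) directly: for an open, hence clopen, subgroup $U$ the set $T\setminus T(U)=\{t\mid G_t\not\leq U\}$ is the image in $T$ of the compact set $D\cap\bigl((G\setminus U)\times T\bigr)$, and the continuous image of a compact set is closed, so $T(U)$ is open.

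The main obstacle is the return implication (iii)$\Rightarrow$(i), equivalently the recovery of (ii) from (iii). The natural attempt is to verify that $T\setminus T(U)$ is closed: given $t_\alpha\to t_0$ with witnesses $g_\alpha\in G_{t_\alpha}\setminus U$, compactness of $G$ produces a subnet with $g_\alpha\to g_0$, and $g_0\in G\setminus U$ since $U$ is clopen, while $g_0\in E$ since $E$ is closed. The genuine difficulty is to upgrade $g_0\in E$ to $g_0\in G_{t_0}$, that is, to locate the limit in the correct fibre, since a priori the closedness of the \emph{union} $E$ does not by itself pin the limit into the fibre over $t_0$. This fibre-location step is the crux, and it is precisely here that the subgroup structure of the $G_t$ and the profinite framework must be brought in. I would attempt it by reducing modulo open normal subgroups $N$ of $G$: in each finite quotient $G/N$ only finitely many images $G_tN/N$ occur, so one can analyse the finite, hence discrete, situation level by level and then pass to the inverse limit, using the closedness of $E$ at every level to show that fibrewise membership is preserved under the limit. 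Making this inverse-limit passage rigorous, so that it genuinely forces $g_0\in G_{t_0}$ and thereby closes the cycle, is the step I expect to demand the most care.
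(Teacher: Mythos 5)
The paper offers no proof of this lemma at all: it is quoted verbatim from Ribes's book as \cite[Lemma 5.2.1]{R}, so there is nothing internal to compare your argument against. On the merits, your implications (i)$\Rightarrow$(ii), (ii)$\Rightarrow$(i) and (ii)$\Rightarrow$(iii) are correct; they are the standard arguments (separating $g_0$ from $G_{t_0}$ by an open subgroup of the form $G_{t_0}N$, and using that projections along a compact factor are closed maps).

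The genuine gap is exactly where you locate it, but it is worse than a step ``demanding care'': the implication (iii)$\Rightarrow$(i) is \emph{false} for an arbitrary system of subgroups in the sense of Definition \ref{continuous system}, so no amount of reduction modulo open normal subgroups or inverse-limit bookkeeping can close your cycle. Take $G=\Z/2\Z$, let $T=\N\cup\{\infty\}$ be a convergent sequence, and set $G_n=G$ for all $n\in\N$ and $G_\infty=1$. Then $E=\bigcup_{t}G_t=G$ is closed, so (iii) holds; but for the open subgroup $U=1$ one has $T(U)=\{\infty\}$, which is not open, so (i) fails, and the incidence set $D$ is not closed because $(x,n)\to(x,\infty)\notin D$ for the nontrivial element $x$. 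The obstacle you name --- upgrading $g_0\in E$ to $g_0\in G_{t_0}$ --- is precisely what fails: closedness of the union carries no information about which fibre a limit lies in, and passing to finite quotients cannot help since the counterexample already lives over a finite group. (Pairwise trivial intersections of the $G_t$ do not repair it either; a variant inside $\Z/p\times\prod_n\Z/p$ with $G_{1/n}$ the graphs $\{(a,ae_n)\}$ and $G_\infty=1$ has closed union but non-closed $D$.) So either the cited statement carries standing hypotheses not reproduced here, or condition (iii) must be read differently (e.g.\ as closedness of $D$ in $G\times T$, which is (ii)). As written, you should prove (i)$\Leftrightarrow$(ii)$\Rightarrow$(iii) --- which you have --- and acknowledge that the converse from (iii) is not available at this level of generality rather than promise an inverse-limit argument that cannot exist.
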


\begin{df}\label{def:freeprod}  A pro-$\C$ group $G$ is said to be a free
(internal) pro-$\C$ product of a continuous system $\{G_t\mid t\in
T\}$ of subgroups  if

(i) $G_t \cap G_s = 1$ for all $t\neq s$ in $T$;

(ii) for every pro-$\C$ group $H$, every continuous map $\beta:
\bigcup_{t\in T} G_t\longrightarrow H$, whose restriction
$\beta_t:=\beta_{|G_t}$ is a homomorphism for every $t\in T$,
extends uniquely to a homomorphism $\psi: G \longrightarrow
H$.\end{df}
We shall use  $\coprod_{t\in T} G_t$ to denote the free profinite product, $\coprod_{t\in T}{}^\C G_t$ to denote the free pro-$\C$ product and $\coprod_{t\in T}{}^{s} G_t$ to denote the free prosoluble product.  The reader can find in  \cite{R}  the proof of the existence and uniqueness of a free pro-$\C$ product as well as basic properties of it.

\bigskip

 We are ready to prove Theorem \ref{prosoluble general int}.

\bigskip

Proof of Theorem \ref{prosoluble general int}. Suppose $H$ is not Frobenius and for no prime $p$ the intersections $H \cap G_t^g, t\in T, g\in G$ are  all pro-$p$. By \cite[(1.7)]{M} (or by \cite[Lemma 4.5.4]{HJ}) for any clopen partition $T=V_1\cup V_2\cup \cdots \cup V_n$ we have $G=G(V_1)\amalg G(V_2) \amalg \cdots \amalg G(V_n)$, where $G(V_i)=\langle G_t\mid t\in V_i\rangle$, and denoting by $\Lambda$ all such clopen partitions of $T$ we have $G_t=\bigcap_{\lambda\in\Lambda} G(V_\lambda)$, where $V_\lambda$ is an element of the partition $\lambda$ containing $t$. By Theorem \ref{prosoluble} $H$ is contained in a conjugate $G(V_\lambda)^{g_\lambda}$ for each $\lambda\in \Lambda$ and this conjugate is unique since $G(V_i)\cap G(V_j)^k=1$ whenever $i\neq j$ or $k\not\in G_j$ by \cite[Theorem 9.1.12]{RZ}.    Then $H$ must be contained in the intersection $\bigcap_{\lambda\in \Lambda}G(V_\lambda)^{g_\lambda}$ of such conjugates  and so $H\leq G_t^{g}$ for some $t\in T, g\in G$.

\begin{rem} One can use also \cite[Theorem 5.3.4]{R} instead of \cite{M} and a projective limit argument from \cite[Remark 4.5.9]{HJ} to perform the proof of Theorem \ref{prosoluble general int}. 

\end{rem}

In the rest of the  section we shall use only finite sets of indices of a free pro-$\C$ product, thereby avoiding general notions, as discussed in Definition \ref{continuous system} and 
  Lemma \ref{systeproperties}. 
  
  \begin{lemma}\label{getting free free factor abstract}  Let $G= G_1 * G _2$ be a free (abstract)  product of non-trivial finite groups such that $|G_1|+|G_2|>4$.  Then for some $g\in G$ there exists a finite index subgroup  $U$ of $G$ that decomposes as  a
free  product $G_1 * G^g_2 * M *  F$  with  $F$  a non-trivial free  group. 

\end{lemma}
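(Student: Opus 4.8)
The plan is to realize the desired subgroup $U$ as the stabilizer of a point in a suitable finite transitive action of $G$, and to read off its free-product decomposition from Bass--Serre theory (equivalently, from the Kurosh subgroup theorem). Recall that $G=G_1*G_2$ acts on the associated Bass--Serre tree $T$, with two orbits of vertices (stabilizers conjugate to $G_1$ and to $G_2$) and one orbit of edges with trivial stabilizer. If $U\le G$ is the stabilizer of a point in a transitive action on a finite set $\Omega$ with $|\Omega|=d$, then $U$ has index $d$, and the quotient graph of groups $U\backslash T$ has vertex groups the point stabilizers in $G_1$ and in $G_2$ of representatives of the $G_1$- and $G_2$-orbits on $\Omega$, trivial edge groups, and underlying graph the bipartite ``orbit incidence'' graph whose edges are indexed by $\Omega$ (an edge for $\omega\in\Omega$ joins its $G_1$-orbit to its $G_2$-orbit). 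Consequently $U$ is the free product of the vertex groups with a free group $F$ of rank equal to the first Betti number $\beta_1=d-|\Omega/G_1|-|\Omega/G_2|+1$ of that graph. Thus it suffices to produce a transitive action in which (a) $G_1$ has a fixed point, (b) $G_2$ has a fixed point, and (c) $\beta_1\ge 1$.

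Set $n=|G_1|$, $m=|G_2|$; since $G_1,G_2$ are nontrivial we have $n,m\ge 2$, and then the hypothesis $n+m>4$ is equivalent to $(n-1)(m-1)>1$. I would take the action on a set $\Omega$ of size $d=nm+1$ in which $G_1$ fixes exactly one point and is free on the remaining $nm$ points (so $|\Omega/G_1|=1+m$), while $G_2$ fixes exactly one \emph{other} point and is free on the remaining $nm$ points (so $|\Omega/G_2|=1+n$); the two fixed points are chosen distinct so as not to create a global fixed point. The formula then gives $\beta_1=(nm+1)-(1+m)-(1+n)+1=nm-m-n=(n-1)(m-1)-1$, which is $\ge 1$ exactly when $(n-1)(m-1)>1$, i.e.\ under our hypothesis. (When $n=m=2$, $G=D_\infty$ is virtually cyclic and the statement genuinely fails, so the hypothesis is sharp.)

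To assemble the conclusion I would take the basepoint $\omega_0$ of the action to be the fixed point of $G_1$; then $G_1\le U=\mathrm{Stab}(\omega_0)$, and the vertex of $T$ corresponding to the coset $G_1$ has $U$-stabilizer exactly $U\cap G_1=G_1$, so $G_1$ occurs as a free factor of $U$ \emph{literally}. By transitivity the $G_2$-fixed point equals $\omega_0\cdot g^{-1}$ for some $g\in G$, and the corresponding type-$2$ vertex contributes the full factor $G_2^{g}$. The remaining vertex groups are collected into a single factor $M$ (here all are trivial, so one may take $M=1$), and the free part $F$ has rank $\beta_1\ge 1$, hence is nontrivial. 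This yields $U\cong G_1*G_2^{g}*M*F$ of finite index $d=nm+1$, as required.

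The one point requiring care --- the main obstacle --- is the simultaneous realizability of (a), (b), (c) by a \emph{connected} action, i.e.\ constructing the bipartite incidence graph with the prescribed vertex degrees (degree $1$ at each of the two full-stabilizer vertices, degree $n$ at each free $G_1$-orbit vertex, degree $m$ at each free $G_2$-orbit vertex) so that it is connected. Here the inequality $(n-1)(m-1)>1$ is used a second time: it guarantees $nm+1>n+m+1$, so there are strictly more edges than a spanning tree of the $n+m+2$ vertices, and a connected realization of this bipartite degree sequence can be written down explicitly (attach the two degree-one vertices as leaves, build a spanning tree through the remaining vertices, then distribute the surplus $(n-1)(m-1)-1$ edges arbitrarily). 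I would exhibit this incidence graph and verify its connectedness directly; the decomposition of $U$ as $G_1*G_2^{g}*M*F$ with $F\neq 1$ then follows from the Bass--Serre analysis of the first paragraph.
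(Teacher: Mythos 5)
Your argument is correct, but it takes a genuinely different route from the paper's. The paper chooses $g=g_1g_2$ with $1\neq g_i\in G_i$, shows $H=\langle G_1,G_2^g\rangle=G_1*G_2^g$ has infinite index by comparing the ranks of the Cartesian subgroups of $H$ and $G$ via the Schreier formula, and then invokes Burns' analogue of M.~Hall's theorem for free products to realize $H$ as a free factor of a finite-index subgroup $U=H*L$; since the free part of the Kurosh decomposition of $L$ may be trivial, the paper must then iterate (pass to a further finite-index subgroup and kill a finite quotient) to force $F\neq 1$. You instead construct the finite-index subgroup directly as a point stabilizer of an explicitly prescribed transitive action on $nm+1$ points and read off its Kurosh decomposition from the quotient graph of the Bass--Serre tree. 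This is more elementary and self-contained: it bypasses Burns' theorem, gives an explicit index $nm+1$, makes $M$ trivial, and produces $F\neq 1$ immediately from the Euler-characteristic count $\beta_1=(n-1)(m-1)-1\ge 1$, which is precisely where the hypothesis $|G_1|+|G_2|>4$ enters (and your remark that the statement fails for $C_2*C_2$ shows the bound is sharp). The one step you defer --- a connected realization of the prescribed bipartite degree sequence --- is indeed routine, and is cleaner than ``spanning tree plus surplus'': take the complete bipartite multigraph on the $m$ free $G_1$-orbits $a_1,\dots,a_m$ and the $n$ free $G_2$-orbits $b_1,\dots,b_n$, delete the edge $a_1b_1$, and attach the two fixed points as pendant edges at $b_1$ and at $a_1$ respectively; for $n,m\ge 2$ this graph is connected and has exactly the required degrees, and since $G_1*G_2$ imposes no compatibility between the two factor actions, any such graph is realized by an action. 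With that graph exhibited, your proof is complete.
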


\begin{proof} Choose  $1\neq g_i\in G_i$ and put $g=g_1g_2$.  Then the  subgroup $H$ generated by $G_1$ and $G_2^g$ is  a free product $H=G_1* G_2^g$ and a proper subgroup of $G$ as no element from $G_2\setminus\{1\}$ is a product of elements of $G_1$ and $G_2^g$. Denote by $C_G$ the Cartesian subgroup  of $G$:  the kernel of the epimorphism onto the direct product of factors. Note that $C_G$ is free of finite index in $G$ and since  $G$ is not dihedral (by hypothesis) $C_G$ is free non-abelian.  As $C_H$ is free of the same rank as $C_G$,  $C_H$ is of infinite index in $C_G$ by the Schreier formula and therefore $[G:H]=\infty$.   Hence by the analog of M. Hall theorem for free products (see \cite[Theorem 1.1]{B}) $H$ is a free factor of a subgroup of finite index $U$ of $G$, i.e. $U=H* L$ and using the Kurosh subgroup theorem for $L$ we have a decomposition $L=*_{i=1}^2 *_{g_i} (L\cap  G_i^{g_{i}}) * F_0$, where  $F_0$ is a free  group. Here $F_0$ can be trivial. If this is the case, apply the above argument  to the decomposition $U=H*L$ to get a subgroup of finite index $U_1=H*L^u* L_1$ with $L_1=*_{i=1}^2 *_{g_i} (L_1\cap  G_i^{g_{i}}) * F_1$, where $F_1$ is free (possibly trivial).  Putting $V_1=L^u*L_1$ we get $U_1=H*V_1$ with $V_1$ having a nontrivial free normal subgroup $F_2$ of finite index.   Setting $U_2$ to be the kernel of the epimorphism $U_1\longrightarrow  V_1/F_2$ that sends $H$  to the trivial group and $V_1$ naturally onto $V_1/F_2$ we get the Kurosh decomposition  $U_2=H*F_2* M$ with $F_2$ non-trivial. 
Thus for certain subgroup of finite index $U$ we achieve a nontrivial decomposition $U=H*M* F$, where $F$ is non-trivial. This finishes the proof.

\end{proof}

\begin{lemma}\label{getting free free factor}  Let $G= G_1 \amalg G _2$ be a free profinite product of non-trivial profinite groups such that $|G_1|+|G_2|>4$.  Then for some $g\in G$ there exists an open subgroup  $U$ of $G$ that decomposes as  a
free profinite product $G_1 \amalg G^g_2\amalg U\amalg F$, where $F$ is a non-trivial free profinite group.  

\end{lemma}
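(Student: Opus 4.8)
The plan is to deduce the statement from its abstract counterpart, Lemma~\ref{getting free free factor abstract}, by passing to profinite completions. With $G_1,G_2$ finite (the setting of Lemma~\ref{getting free free factor abstract}, whose non-degeneracy hypothesis is exactly $|G_1|+|G_2|>4$), the abstract free product $\Gamma=G_1\ast G_2$ is finitely generated, virtually free, and residually finite, and its profinite completion is canonically $\widehat\Gamma\cong G_1\amalg G_2=G$. Thus $\Gamma$ embeds densely in $G$, and taking closures in $G$ sets up a bijection between finite-index subgroups of $\Gamma$ and open subgroups of $G$ that preserves the index. I would therefore run the abstract construction inside $G$: apply Lemma~\ref{getting free free factor abstract} to $\Gamma$ to obtain an element $g\in\Gamma\subseteq G$ and a finite-index subgroup $U_0:=G_1\ast G_2^{g}\ast M\ast F$ of $\Gamma$, with $F$ a non-trivial free group.

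Next I would set $U:=\overline{U_0}$, the closure of $U_0$ in $G$. Since $U_0$ has finite index in $\Gamma$, it is open in the profinite topology of $\Gamma$, that topology induces on $U_0$ its own full profinite topology, and hence $U$ is an open subgroup of $G$ with $[G:U]=[\Gamma:U_0]$, $U\cap\Gamma=U_0$, and $U\cong\widehat{U_0}$. What remains is to show that the abstract Kurosh decomposition of $U_0$ survives completion, namely
\[
U \;\cong\; G_1\amalg G_2^{g}\amalg \overline{M}\amalg \overline{F},
\]
with $\overline{F}\cong\widehat{F}$ a non-trivial free profinite group; this is exactly the conclusion of the lemma.

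The crux, and the step I expect to demand real care, is the last one: that forming the profinite completion commutes with the free-product decomposition of $U_0$. The point is that $U_0$ is a finitely generated virtually free group, equivalently the fundamental group of a finite graph of finite groups, and that its free factors $G_1,G_2^{g}$ (finite), $M$ (a free product of conjugates of the $G_i$ with a free group, as produced by the Kurosh theorem), and $F$ (free) are closed in $U_0$ and inherit their full profinite topology from it; this efficiency with respect to the class of all finite groups yields $\widehat{U_0}\cong G_1\amalg G_2^{g}\amalg\widehat{M}\amalg\widehat{F}$, and I would cite the corresponding results on free products of finite groups and on completions of fundamental groups of finite graphs of finite groups in \cite{R} and \cite{RZ}. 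Granting this, $\overline{F}\cong\widehat{F}$ is non-trivial because $F$ is, completing the argument. Alternatively, and covering possibly infinite factors in one stroke, I could argue directly inside $G$ via the profinite Kurosh subgroup theorem for open subgroups together with its rank formula, choosing an open $U$ that contains $G_1$ and $G_2^{g}$ but is not generated by them so as to force a non-trivial free profinite factor; there the main obstacle becomes the verification that the rank of the free part is positive.
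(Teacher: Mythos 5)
Your treatment of the case where $G_1,G_2$ are finite is correct and is essentially the paper's own first step: apply Lemma \ref{getting free free factor abstract} to $G_1*G_2$ and observe that the Kurosh decomposition of the finite-index subgroup survives profinite completion. The gap is that the lemma is stated for \emph{arbitrary} non-trivial profinite groups, and the hypothesis $|G_1|+|G_2|>4$ does not force finiteness --- it only excludes the pro-dihedral case $C_2\amalg C_2$ (and the lemma is later applied, e.g.\ in Theorem \ref{characterization}, with infinite factors). Your proposal relegates the general case to a closing ``alternatively'' sentence that names, but does not resolve, exactly the hard point: why the free part $F$ in the profinite Kurosh decomposition of a suitably chosen open $U$ is non-trivial. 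The profinite Kurosh theorem by itself gives no such guarantee, and the Euler-characteristic/Schreier rank count you use implicitly in the abstract setting has no direct profinite analogue when the factors are infinite.

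The paper closes this gap by a reduction to the finite case that your proposal is missing: choose non-trivial finite quotients $f_i\colon G_i\to Q_i$, let $f\colon G\to Q_1\amalg Q_2$ be the induced epimorphism, apply the finite case to obtain an open $U_Q=Q_1\amalg Q_2^q\amalg M\amalg F_Q$ with $F_Q\neq 1$, and set $U=f^{-1}(U_Q)$. The profinite Kurosh theorem then gives $U=(G_1\amalg G_2^g)\amalg\bigl(\coprod_i\coprod (U\cap G_i^{g_i})\bigr)\amalg F$, and the key observation is that $\ker f$, being the normal closure of $\ker f_1$ and $\ker f_2$, lies in the kernel of the projection $U\to F$; hence $U\to F$ induces an epimorphism fitting over $U_Q\to F_Q$, so $F$ surjects onto $F_Q\neq 1$ and is therefore non-trivial. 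Without this (or an equivalent) argument, your proof establishes the lemma only for finite free factors.
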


\begin{proof} Suppose first that $G_1,G_2$ are finite and so $G$ is the profinite completion of the abstract free product $G^{abs}=G_1 *G_2$. By Lemma \ref{getting free free factor abstract} there exists $g\in G^{abs}$ and a finite index subgroup $U^{abs}$ of $G^{abs}$ such that $G_1 * G^g_2 * M^{abs} * F^{abs}$  with  $F^{abs}$  a non-trivial free  group (of finite rank). It follows that $U=\widehat U^{abs}=G_1\amalg G_2^g\amalg M \amalg \widehat F^{abs}$ is a free profinite product with $\widehat F^{abs}$  non-trivial  free profinite, as required.

\medskip
Suppose now  $G_1$, $G_2$ are arbitrary. Let $Q_i$ be a non-trivial finite quotient of $G_i$ and $$f:G\longrightarrow Q=Q_1 \amalg Q_2$$ the  epimorphism induced by the natural epimorphisms $f_i:G_i\longrightarrow Q_i$.   By the previous paragraph there exists an open subgroup $U_Q$ such that $U_Q=Q_1\amalg Q_2^q\amalg M\amalg F_Q$ for some  $q\in Q$ and a non-trivial free profinite group $F_Q$. Put $U=f^{-1}(U_Q)$. Then by a profinite version of the Kurosh subgroup theorem (cf. \cite[Theorem 9.1.9]{RZ}) $U=(G_1\amalg G_2^g)\amalg \left(\coprod_{i=1}^2{}\coprod (U\cap  G_i^{g_{i}})\right) \amalg F$, where   $F$ is a free profinite group. As $ker(f)$ is generated by $ker(f_1),\ker(f_2)$ as a normal subgroup,  the kernel of the natural projection  $U\longrightarrow F$ contains $ker(f)$ and so one has the following commutative diagram of epimorphisms:
$$\xymatrix{U\ar[r]\ar[d]^f&F\ar[d]\\
            U_Q\ar[r]& F_Q}$$
Since $F_Q$ is non-trivial so is $F$. The proof is finished.
 
\medskip

\end{proof}

To prove a prosoluble version of Lemma \ref{getting free free factor}
we shall need  the following simple lemma about finite soluble groups.

\begin{lemma}\label{fater} Let $p, q$ be prime numbers. Let  $H=C_p\times C_q$ be a cyclic group of order $pq$ if $p\neq q$ and $H=C_p$ if $p=q$. Then there exists a prime $r$ such that $pq| r-1$ and a semidirect product
$R= \F_{r}\rtimes H$ of a  field 
$\F_{r}$ of $r$ elements  with  $H$  such that 
$R$ is generated by an element of order $p$ and an element of order $q$.\end{lemma}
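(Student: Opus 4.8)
The plan is to realize $R$ concretely as an affine group over a finite field. First I would invoke Dirichlet's theorem on primes in arithmetic progressions to choose a prime $r$ with $r\equiv 1\pmod{pq}$; this guarantees the required divisibility $pq\mid r-1$ and, since then $r>pq\ge p,q$, also $r\neq p,q$. Because $\F_r^{\times}$ is cyclic of order $r-1$ and both $p\mid r-1$ and $q\mid r-1$, it contains elements $\zeta_p$ and $\zeta_q$ of orders $p$ and $q$. Writing $H$ multiplicatively inside $\F_r^{\times}$ (as $\langle\zeta_p\rangle\times\langle\zeta_q\rangle$ when $p\neq q$, and as $\langle\zeta_p\rangle$ when $p=q$) gives a faithful action of $H$ on the additive group of $\F_r$ by multiplication, hence a semidirect product $R=\F_r\rtimes H$. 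Note that the construction only uses $p\mid r-1$ and $q\mid r-1$, while the stronger condition $pq\mid r-1$ demanded by the statement (which for $p=q$ reads $p^2\mid r-1$) is exactly what Dirichlet supplies. I will use only the affine description of $R$, in which an element $(v,h)\in R$ acts on $\F_r$ as $x\mapsto hx+v$.

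Second, I would produce the two generators explicitly. Let $\sigma\in H$ act as multiplication by $\zeta_p$ and $\tau\in H$ as multiplication by $\zeta_q$, taking $\tau=\sigma$ in the degenerate case $p=q$. Set $a=(0,\sigma)$ and $b=(1,\tau)$, that is, the affine maps $x\mapsto\zeta_p x$ and $x\mapsto\zeta_q x+1$. A direct check shows $a$ has order $p$ and $b$ order $q$: for any $v$ one computes $(v,\sigma)^{p}=\bigl((1+\zeta_p+\cdots+\zeta_p^{\,p-1})v,\,1\bigr)=(0,1)$, since $\zeta_p$ is a primitive $p$-th root of unity makes the geometric sum vanish, and symmetrically for $b$; for intermediate exponents the $H$-component is nontrivial. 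Thus $a$ is an element of order $p$ and $b$ an element of order $q$, as the statement requires.

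The crux is then to show $\langle a,b\rangle=R$, and this is the step I expect to be the only genuine obstacle, since a priori $a$ and $b$ might generate merely a conjugate of $H$ or some other proper (Frobenius) subgroup. I would settle it by exhibiting a nontrivial translation in $\langle a,b\rangle$: a short computation gives the commutator $[a,b]=aba^{-1}b^{-1}\colon x\mapsto x+(\zeta_p-1)$, and $\zeta_p-1\neq 0$ because $\zeta_p\neq 1$. The additive group $\F_r$ is the normal subgroup of translations of $R$, and it is cyclic of prime order $r$, hence has no proper nontrivial subgroups; therefore $\langle a,b\rangle\cap\F_r\neq 1$ forces $\F_r\le\langle a,b\rangle$. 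Finally, reduction modulo $\F_r$ sends $a,b$ to $\sigma,\tau$, which generate $H=R/\F_r$ in both cases (as $\langle\zeta_p\rangle\times\langle\zeta_q\rangle$ when $p\neq q$, and as $\langle\sigma\rangle$ when $p=q$). So $\langle a,b\rangle$ contains $\F_r$ and surjects onto $H$, whence $\langle a,b\rangle=R$, completing the argument.
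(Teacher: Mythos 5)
Your proposal is correct and follows essentially the same route as the paper: both invoke Dirichlet to get $r\equiv 1\pmod{pq}$, form the same affine group $\F_r\rtimes H$ with $H$ acting by multiplication, take one generator in $H$ and one in a translate of $H$ (your $b=(1,\tau)$ is precisely a conjugate $C_q^m$ as in the paper), and conclude by showing the commutator is a nontrivial translation, hence of order $r$. Your quotient-plus-normal-subgroup finish is just a slightly more explicit version of the paper's order-counting conclusion.
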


\begin{proof} Since an
arithmetic progression $\{1+pqk | k=1, 2 \ldots \}$ contains infinitely many primes by
Dirichlet’s theorem, there exists a prime $r$ such that $pq|r-1$.
Consider a semidirect product
$R= \F_{r}\rtimes H$ with the faithful action of $H$ by multiplication by units of $\F_r$. Then for any non-zero element $m \in F_{r}$ we
 get $R=\langle C_p , C_q^m\rangle$. Indeed, the generators $c_1$ of $C_p$ and $c_2$ of $C_q^m$ have order $p$ and $q$ respectively and their commutator $[c_1,c_2]$ has order $r$. Therefore $\langle C_p , C_q^m\rangle$  contains elements of order $p, q$ and $r$ and so must coincide with $G$. \end{proof}

\begin{lemma}\label{embeds} Let $G= G_1 \amalg^s G _2$ be a free prosoluble product of non-trivial prosoluble groups such that $|G_1|+|G_2|>4$.  Then for some $g\in G$ there exists an open subgroup  $U$ of $G$ that decomposes as  a
free prosoluble product $G_1 \amalg^s G^g_2\amalg^s U_1\amalg^s U_2 \amalg^s F_s$, where $F_s$ is a non-abelian free prosoluble group and $U_i=\coprod_{ g_i}{}^s (U\cap  G_i^{g_{i}})$. Moreover, if $G_i$ is infinite then $U_i$ is non-trivial.\end{lemma}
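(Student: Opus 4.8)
The plan is to follow the proof of Lemma~\ref{getting free free factor} almost verbatim, replacing the profinite Kurosh subgroup theorem by its pro-$\calS$ analogue, and to isolate the one genuinely new point: the free factor must be made \emph{non-abelian}, not merely non-trivial. First I would reduce to finite groups. Choosing non-trivial finite soluble quotients $Q_i$ of $G_i$ with $|Q_1|+|Q_2|>4$ (possible because $G_1,G_2$ are not both $C_2$), I set $Q=Q_1\amalg^s Q_2$ and let $f\colon G\to Q$ be the induced epimorphism. If the finite case yields an open $U_Q=Q_1\amalg^s Q_2^q\amalg^s(\cdots)\amalg^s F_Q$ with $F_Q$ non-abelian free prosoluble, then $U:=f^{-1}(U_Q)$ is the required subgroup: the Kurosh subgroup theorem for open subgroups of free pro-$\C$ products with $\C=\calS$ (the prosoluble analogue of \cite[Theorem~9.1.9]{RZ}; see \cite{R}) gives the stated decomposition of $U$, the full copies $G_1$ and $G_2^g$ appear because $f(G_1)=Q_1\le U_Q$ and $f(G_2^g)=Q_2^q\le U_Q$ for $g$ with $f(g)=q$, and the commuting square built from $f$ forces the free part $F_s$ of $U$ to surject onto $F_Q$, hence to be non-abelian.

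Thus everything reduces to finite soluble $G_1,G_2$ with $|G_1|+|G_2|>4$. I would first treat the Cartesian subgroup $U_0=\ker(G\to G_1\times G_2)$. It is torsion-free, since a non-trivial element of a conjugate of $G_i$ has non-trivial image in $G_1\times G_2$; hence by the prosoluble Kurosh theorem it has no finite Kurosh factors and is free prosoluble. The whole statement then comes down to proving that $U_0$ is non-abelian, i.e.\ $d(U_0)\ge 2$: granting this, the subgroup $U$ containing $G_1$ and a conjugate $G_2^g$ together with a non-abelian free factor is extracted by the same finite-index and Kurosh manipulations used in Lemma~\ref{getting free free factor abstract} and its profinite version, which transport a non-abelian free prosoluble factor along.

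Proving $d(U_0)\ge 2$ is the heart of the argument and the main obstacle. For any prime $\ell$ one has $d(U_0)=\dim_{\F_\ell}\Hom(U_0,\F_\ell)$, so I must exhibit two $\F_\ell$-independent continuous homomorphisms out of $U_0$; equivalently, two independent finite soluble quotients of $G$ on which the image of $U_0$ is non-trivial. The profinite count—the Schreier/Euler formula giving the abstract Cartesian rank $(|G_1|-1)(|G_2|-1)\ge 2$—does \emph{not} transfer verbatim, because the pro-$\calS$ topology that $G$ induces on $U_0$ may be strictly coarser than its own and could a priori collapse abstract generators. Lemma~\ref{fater} is exactly the device that prevents this collapse. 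For primes $p,q$ dividing $|G_1|,|G_2|$ it supplies a finite soluble Frobenius group $R=\F_r\rtimes H$ together with an epimorphism $G\to R$ carrying chosen order-$p$ and order-$q$ elements of $G_1,G_2$ to generators whose commutator has order $r$; under it $U_0$ maps onto the Frobenius kernel $\F_r\cong C_r$, producing one non-trivial homomorphism. Since $|G_1|+|G_2|>4$, two \emph{independent} such data are available—two independent ``$G_1$-by-$G_2$'' commutator classes—yielding $U_0\twoheadrightarrow C_r\times C_r$ and hence $d(U_0)\ge 2$. The lone excluded case $G_1=G_2=C_2$ is precisely the one in which only a single such class exists and $U_0\cong\widehat{\Z}$. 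I expect the delicate step to be the honest verification that the two homomorphisms coming from Lemma~\ref{fater} are linearly independent over $\F_\ell$, i.e.\ that the coarser induced topology really does retain both generators.

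Finally, the \emph{Moreover} clause is straightforward. When $G_i$ is infinite, the subgroup $U$ is a proper open subgroup containing $G_i$, so $G_i$ has at least two double cosets in $U\backslash G/G_i$; the non-identity double coset contributes a Kurosh factor $U\cap G_i^{g_i}$ that is open, and therefore non-trivial, in the infinite group $G_i^{g_i}$. Hence $U_i=\coprod_{g_i}{}^s(U\cap G_i^{g_i})\neq 1$, as claimed.
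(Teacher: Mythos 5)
Your reduction to finite soluble free factors and your treatment of the \emph{Moreover} clause are fine, and you have correctly spotted that Lemma \ref{fater} is the engine of the proof. But the central step of your argument has a genuine gap: you reduce everything to showing that the Cartesian subgroup $U_0=\ker(G\to G_1\times G_2)$ is non-abelian free prosoluble, and then assert that the required open subgroup $U=G_1\amalg^s G_2^g\amalg^s\cdots\amalg^s F_s$ is ``extracted by the same finite-index and Kurosh manipulations used in Lemma \ref{getting free free factor abstract}.'' Those manipulations rest on Burns' analogue of M.\ Hall's theorem: the finitely generated, infinite-index subgroup $\langle G_1,G_2^g\rangle$ is a free factor of a finite-index subgroup. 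No such theorem is available for free prosoluble products; the finite-index subgroup produced by Burns' theorem in the abstract free product need not be open in the pro-$\calS$ topology, and there is no a priori reason why the closed subgroup generated by $G_1$ and $G_2^g$ in $G_1\amalg^s G_2$ should be their free prosoluble product, let alone a free factor of an open subgroup. Knowing $d(U_0)\ge 2$ does not produce the decomposition, since $U_0$ meets every conjugate of $G_1$ and $G_2$ trivially and so contains no candidate for the factors $G_1$ and $G_2^g$.

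The paper's proof avoids this entirely by using Lemma \ref{fater} for a different purpose: it maps $G$ onto the Frobenius group $R=\F_r\rtimes H$ (sending $G_1$ onto $\langle a\rangle\le H$ and $G_2$ onto $\langle b\rangle$) and takes $U=f^{-1}(H)$. The double-coset computation inside $R$ (namely $|H\backslash R/\langle a\rangle|=\tfrac{r-1}{p}+1$, etc.) shows directly that $G_1$ and a conjugate $G_2^g$ lie in $U$ and occur as distinct Kurosh factors, and the Euler-characteristic count $1+[G:U]-\sum_i|U\backslash G/G_i|=(r-1)(1-\tfrac1p-\tfrac1q)$ gives the nontrivial free part (with the cases $p=q=2$ and the upgrade to a non-abelian free factor handled by iterating the construction, as you anticipated). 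Your secondary worry --- verifying that two homomorphisms onto Frobenius kernels are independent --- is real but moot, because even a correct lower bound on $d(U_0)$ would not close the main gap.
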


\begin{proof} Let $C_p$ and $C_q$ be  quotient groups of $G_1$ and $G_2$ of prime orders $p$ and $q$ respectively.

 Let  $R=\F_{r}\rtimes H$ be a finite group from Lemma \ref{fater}  generated by elements $a,b$ of order $p$ and $q$ and assume w.l.o.g that $a\in H$.  Let $f_1:G_1 \longrightarrow \langle a\rangle$ and $f_2:G_2\longrightarrow \langle b \rangle$ be the natural epimorphisms  with kernels $N_1$ and $N_2$.  Consider an epimorphism $f :  G \longrightarrow R$ extending $f_1$ and $f_2$. Then $ U= f^{-1}(H)$ is a
proper open subgroup of $G$ and so by a prosoluble version of the Kurosh subgroup theorem (cf. \cite[ Theorem 9.1.9]{RZ}) $U=\coprod_{i=1}^2{}^s\coprod_{ D_i}{}^s (U\cap  G_i^{g_{i}}) \amalg F_s$, where $D_i=U\backslash G/G_i$, $g_i$ runs through representatives of the double cosets  $D_i$ and $F_s$ is a free prosoluble group of rank $1+[G:U]-\sum_{i=1}^2|U\backslash G/G_i|$. 

Observe that $G_1, G_2^g\leq U$ for some $g\in f^{-1}(c)$, where $c$ is an element conjugating $b$ into $H$.
Therefore we can rewrite $U=G_1  \amalg^s G_2^{g} \amalg^s  U_1\amalg^s U_2 \amalg^s F_s$. 
Note that $|U\backslash G/G_1|=|H\backslash R/\langle a\rangle|$ and taking into account that $Hra=Har^a$ we can see that the action of $\langle a\rangle$ on $H\backslash R$ is isomorphic to the action of $\langle a\rangle$ on $\F_r$, so $|H\backslash R/\langle a\rangle|=\frac{r-1}{p}+1\geq 2$. Similarly,  $|U\backslash G/G_2|=\frac{r-1}{q}+1\geq 2$ and so if $G_i$ is infinite, the decomposition of $U_i$ into a free profinite product contains at least one non-trivial  factor $U\cap  G_i^{g_{i}}=N_i^{g_i}$ distinct from $G_1$, $G_2^g$.

 We show that $F_s$ is non-trivial if $|H|>2$. Indeed, $$1+[G:U]-\sum_{i=1}^2|U\backslash G/G_i|=1+r-\frac{r-1}{p}-\frac{r-1}{q}-2)=(r-1)(1-\frac{1}{p}-\frac{1}{q})$$ which is not $0$ unless $H$ is of order $2$.
 
  If $H$ in Lemma \ref{fater} has order $2$ then $p=q=2$. In this case  $U_1\amalg^s U_2$ is non-trivial, since at least one $G_i$ has order $>2$ and as $|U\backslash G/G_i|\geq 2$ we have a factor $U\cap G_i^{g_i}\neq 1$ in $U_i$. 

 To get $F_s$ non-trivial in this case we put $G'_1=G_1\amalg^s G_2^g$, $G'_2=U_1\amalg^s U_2$ and  write $U$ as $U=G'_1\amalg^s G'_2$.  Then applying the argument above to this decomposition we get an open subgroup $V\leq U$ such that $V=G'_1\amalg^s (G'_2)^{g'}\amalg^s V_1=G_1\amalg^s G_2^g\amalg^s (G'_2)^{g'}\amalg^s V_1$. If $(G'_2)^{g'}\amalg^s V_1$ has a quotient of order $p\neq 2$ then the above argument applied to $V$ produces a needed open subgroup with $F_s$ non-trivial. Otherwise, the kernel $K$ of the epimorphism $V\longrightarrow C_2$ to a group of order 2 that sends $G_1\amalg^s G_2^g$ to the trivial group and $(G'_2)^{g'}, V_1$ epimorphically to $C_2$ has the Kurosh decomposition $K=G_1\amalg^s G_2^g\amalg^s (K\cap (G'_2)^{g'})\amalg^s (K\cap V_1) \amalg^s F_s$ with $F_s\cong \Z_2$.

  Finally, to get a non-abelian factor which is free  we can put $G'_1=G_1\amalg^s G_2^g$ and apply the argument of the preceding paragraph  to  $G'_1 \amalg^s F_s$  getting an open subgroup $W=G'_1\amalg^s (F_s)^{w}\amalg^s F'_s=G_1\amalg^s G_2^{g}\amalg^s F_s^{w}\amalg^s F'_s$ for some $w\in W$ with non-trivial free prosoluble group $F'_s$, so that one has a non-abelian free factor $F_s^w\amalg^s F'_s$.   
  This finishes the proof. 
  
\end{proof}

\begin{lemma}\label{embedding} Let  $G=G_1\amalg^s G_2$ be a free prosoluble product of non-trivial prosoluble groups such that $|G_1|+|G_2|>4$. Then for some $g\in G$ the group $G$ contains $G_1\amalg^s G_2^g \amalg^s U$ such that $|U|=|G|$. \end{lemma}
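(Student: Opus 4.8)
The plan is to deduce the statement from a single application of Lemma \ref{embeds}, followed by a short computation of cardinalities; no iteration should be needed. First I would apply Lemma \ref{embeds} to obtain an element $g\in G$ and an open subgroup $W\le G$ decomposing as a free prosoluble product
\[
W=G_1\amalg^s G_2^g\amalg^s U_1\amalg^s U_2\amalg^s F_s,
\]
where $F_s$ is a non-abelian free prosoluble group, $U_i=\coprod_{g_i}{}^s(W\cap G_i^{g_i})$, and $U_i\neq 1$ whenever $G_i$ is infinite. Setting $U:=U_1\amalg^s U_2\amalg^s F_s$, the subgroup $G_1\amalg^s G_2^g\amalg^s U$ is exactly $W$, hence is contained in $G$; so the whole task reduces to checking that $|U|=|G|$.

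Since $U$ is (isomorphic to) a free prosoluble factor of $W\le G$, it embeds into $G$ and thus $|U|\le |G|$; the content lies in the reverse inequality. Here I would use the standard cardinality facts for profinite groups (\cite{RZ}): an infinite profinite group has cardinality at least $2^{\aleph_0}$, an open subgroup of an infinite profinite group is equicardinal with it, and the cardinality of a free prosoluble product of finitely many groups equals the maximum of the cardinalities of the factors and $2^{\aleph_0}$; in particular $|G|=\max(|G_1|,|G_2|,2^{\aleph_0})$, and $G$ is infinite because it contains the infinite group $F_s$. To bound $|U|$ from below I would argue factor by factor: the non-abelian free prosoluble group $F_s$ is infinite, so $|F_s|=2^{\aleph_0}$ and already $|U|\ge 2^{\aleph_0}$; and if $G_i$ is infinite, then every intersection $W\cap G_i^{g_i}$ is open in the conjugate $G_i^{g_i}$ — and hence of finite index and of cardinality $|G_i|$ — so that $|U|\ge |U_i|\ge |G_i|$. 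Combining these, $|U|\ge\max(|G_1|,|G_2|,2^{\aleph_0})=|G|$, which together with $|U|\le|G|$ yields $|U|=|G|$.

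I do not anticipate a genuine obstacle: the entire combinatorial effort — producing the free factor $F_s$, arranging it to be non-abelian, and splitting off the conjugate-intersection factors $U_i$ — has already been discharged in Lemma \ref{embeds}, and what remains is only the bookkeeping of cardinalities together with the elementary observation that the open subgroup $W$ meets each conjugate $G_i^{g_i}$ in an open, hence (for infinite $G_i$) equicardinal, subgroup. The only point demanding a little care is citing correctly the cardinality formula for free prosoluble products and the equicardinality of open subgroups, which I would take from \cite{RZ}. I would also remark that the identical argument works verbatim if $|\cdot|$ is read as the Steinitz order rather than the cardinality, since $F_s$ is then already of order $\prod_p p^\infty$, forcing $|U|=|G|$ even without invoking the factors $U_i$.
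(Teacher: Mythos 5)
Your proposal is correct and follows essentially the same route as the paper: a single application of Lemma \ref{embeds}, taking $U=U_1\amalg^s U_2\amalg^s F_s$, and then the cardinality count using that the factors $W\cap G_i^{g_i}$ of $U_i$ are open in conjugates of $G_i$ (hence equicardinal with $G_i$ when $G_i$ is infinite) and that $F_s\neq 1$ handles the case of finite free factors. The only difference is that you spell out the cardinality bookkeeping in slightly more detail than the paper does.
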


\begin{proof} By Lemma \ref{embeds} $G$ contains a free prosoluble product  $$G_1 \amalg^s G^g_2\amalg^s U_1\amalg^s U_2\amalg^s F_s,$$  where $U_i$ is isomorphic to a free product of open subgroups of $G_i$, $i=1,2$ and  a non-abelian free prosoluble group $F_s$.
Moreover, $U_1$ and $U_2$ are non-trivial if $G_1,G_2$ are infinite. 
Hence if one of $G_i$ is infinite, say $G_1$ and   w.l.o.g.  $|G_1|\geq |G_2|$, then $|U_1|=|G|=|U_1 \amalg  U_2\amalg F_s|$ and so  the result is proved.

 If $G_1,G_2$ are finite, the result is obvious since $F_s\neq 1$.

\end{proof}

\section{Relatively projective groups}

In this section we shall follow the terminology \cite{HZ} that concerns relatively projective groups. In particular, the actions of  profinite groups on profinite spaces will be on the right. We also continue to use $\calS$ for the class of all finite soluble groups and $\C$  for an arbitrary class of finite groups closed for subgroups, quotients and extensions.

\begin{df}\label{def proj}
Consider the category of \textbf{profinite pairs}
$(G,\G)$,
where $G$ is a profinite group
and $\G$ is
a continuous family of closed subgroups of $G$,
closed under the conjugation in $G$.

A \textbf{morphism}
$\varphi \colon (G,\G) \to (A,\A)$
in this category
is a homomorphism $\varphi \colon G \to A$ of profinite groups
such that
$\varphi(\G) \subseteq \A$,
that is,
for every $\Gamma \in \G$ there is $\Delta \in \A$
such that $\varphi(\Gamma) \le \Delta$;
it is an \textbf{epimorphism}, if
$\varphi(\G) = \A$.

An \textbf{embedding problem} for $(G,\G)$
(cf. \cite[Definition 5.1.1]{HJ} or \cite[Definition 4.1]{H})
is a pair of morphisms
\begin{equation}\label{EP pairs}
\xymatrix{
& (G,\G) \ar[d]^{\varphi} \\
(B,\B) \ar[r]^{\alpha} & (A,\A) \\
}
\end{equation}
such that $\alpha$ is an epimorphism
and for every $\Gamma \in \G$
there exists $\Delta \in \B$
and a homomorphism $\gamma_\Gamma\colon \Gamma\to \Delta$
such that
$\alpha\circ\gamma_\Gamma=\varphi|_{\Gamma}$.

It is a {\it $\C$-embedding problem} if $G,B,A$ are pro-$\C$.
We say that \eqref{EP pairs} is \textbf{finite}, if $B$ is finite.
We say that \eqref{EP pairs} is \textbf{rigid},
if $\alpha$ is \textbf{rigid},
i.e.,
$\alpha|_{\Delta}$ is injective for every $\Delta \in \B$.

A \textbf{solution} of \eqref{EP pairs} is a morphism
$\gamma\colon (G,\G)\to (B,\B)$
such that $\alpha\circ\gamma=\varphi$.

We say that $G$ is $\C$-\textbf{projective relative to $\G$}, if
every finite $\C$-embedding problem \eqref{EP pairs} for $(G,\G)$
has a solution.
Equivalently (\cite[Corollary 5.1.5]{HJ}),
every finite rigid embedding problem \eqref{EP pairs} for $(G,\G)$ has a solution. If $\C$ is a class of all finite groups we simply write projective.
\end{df}

\begin{rem}\label{second countable embedding} By \cite[Lemma 3.5.1]{HJ} if $G$ is $\C$-projective then every $\C$-embedding problem \eqref{EP pairs} with $B$  second countable is solvable.

\end{rem}

\begin{df}
A {\it group pile} (or just {\it pile}) is a pair $\bfG = (G,T)$ consisting of a profinite group $G$, a profinite space $T$ and a continuous action of $G$ on $T$. The stabilizer of $t\in T$ will be denoted by $G_t$.
\end{df}

\begin{df}
A {\it morphism} of group piles $\alpha: (G,T) \longrightarrow (G',T')$ consists of a group homomorphism $\alpha: G \longrightarrow G'$ and a continuous map $\alpha: T \longrightarrow T'$ such that $\alpha(tg) = \alpha(t){\alpha(g)}$ for all $t \in T$ and $g \in G$. The above morphism is an {\it epimorphism} if $\alpha(G) = G'$, $\alpha(T) = T'$ and for every $t' \in T'$ there is $t \in T$ such that $\alpha(t) = t'$ and $\alpha(G_t) = {G'}_{t'}$. It is {\it rigid} if $\alpha$ maps $G_t$ isomorphically onto ${G'}_{\alpha(t)}$, for every $t \in T$, and the induced map of the orbit spaces $T/G  \longrightarrow T'/G'$ is a homeomorphism.
\end{df}

\begin{df}
An {\it embedding problem} for a pile $\bfG$ is a pair $(\varphi,\alpha)$  of morphisms of group piles, where $\varphi: \bfG \to \bfA$ and $\alpha: \bfB \to \bfA$, such that $\alpha$ is an epimorphism.

\begin{equation}\label{EPfree products}
\xymatrix{&&\bfG\ar[d]^{\varphi}\ar@{-->}[dll]_{\gamma}\\
             \bfB\ar[rr]_{\alpha}&& \bfA}
\end{equation} 

 It is a {\it $\C$-embedding problem} if $G,B,A$ are pro-$\C$. It is {\it rigid}, if $\alpha$ is rigid. A {\it solution} to the embedding problem is a morphism $\gamma: \bfG \to \bfB$ such that $\alpha \circ \gamma = \varphi$.
\end{df}

\begin{df}\label{def: projective}
A pile $\bfG=(G,T)$ is {\it $\C$-projective} if every finite rigid $\C$-embedding problem for $\bfG$ has a solution. In this case we say that $G$ is $\C$-projective relative to the family of point stabilizers $\{G_t\mid t\in T\}$.
\end{df}

\begin{rem}\label{matching definitions} Definition \ref{def: projective}  is  sligthly more restrictive than   Definition \ref{def proj};  it becomes equivalent if one adds the condition $G_t\neq G_{t'}$ whenever $t\neq t'\in T_0=\{t\mid G_t\neq 1\}$ (see \cite[Remark 5.8]{HZ}. In Definition \ref{def: projective} we assume that $G$ acts on a profinite space and the family is the family of point stabilizers. However, the notion does not depend on the choice of the space (as long as the family of point stabilizers is the same). Indeed,  if $(G,T)$ is $\C$-projective then $G_{t_1}\cap G_{t_2}=1$ for $t_1\neq t_2\in T$ and any pile $(G,T')$ with the same property such that  $\{G_t\mid t\in T\}\cup \{1\}=\{G_t\mid t\in T'\}\cup \{1\}$  is $\C$-projective (see \cite[Proposition 5.7]{HZ}).\end{rem}

\begin{rem}\label{projective C-projective} If $\C_2\subset \C_1$ are two classes of finite groups closed for subgroups, quotients and extensions then  a  $\C_1$-projective pro-$\C_2$ pile $(G,T)$ is $\C_2$-projective, since every finite $\C_2$-embedding problem is a $\C_1$-embedding problem. In particular, if $G$ is pro-$\C$ and $(G,T)$ is projective, then $(G,T)$ is $\C$-projective for any class of finite groups closed for subgroups, quotients and extensions. Similarly if $G$ is projective relative to $\G$ then it is $\C$-projective relative to $\G$.

\end{rem}

\begin{prop}\label{pro-p pile} Let  $G$ be a {\it $\C$-projective} relative to $\G$ profinite group and $T_0=\{t\in T\mid G_t\neq 1\}$. If $G$ is finitely generated and  $G_t, t\in T$ are all pro-$p$, then $T_0$ is closed in $T$ and $|T_0/G|\leq d$, the minimal number of generators of $G$. Moreover, $\sum_t d(G_t)\leq d(G)$, where $t$ runs through representatives of $G$-orbits in $T_0$.

\end{prop}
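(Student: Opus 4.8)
The plan is to reduce everything to the inequality $\sum_{i=1}^{k} d(G_{t_i}) \le d(G)$ for an arbitrary finite family $t_1,\dots,t_k\in T_0$ lying in pairwise distinct $G$-orbits, and to prove this inequality cohomologically. Granting it, the remaining assertions follow quickly: since every nontrivial stabilizer has $d(G_{t_i})\ge 1$, the inequality forces $k\le d(G)=d$ for every such family, so $|T_0/G|\le d<\infty$; and as each $G$-orbit $tG$ is the continuous image of the compact group $G$ under $g\mapsto tg$, it is compact, hence closed in the Hausdorff space $T$. Thus $T_0$, being the union of its at most $d$ orbits, is a finite union of closed sets and therefore closed. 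The displayed inequality $\sum_t d(G_t)\le d(G)$ is then exactly the case where $t$ ranges over a full set of orbit representatives.

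For the core inequality I would pass to cohomology with coefficients in $\F_p$. Because every $G_{t_i}\neq 1$ is pro-$\C$ and pro-$p$, the group $C_p$ is a quotient of $G_{t_i}$, so $\F_p\in\C$ and all elementary abelian $p$-groups lie in $\C$. Since $G_{t_i}$ is pro-$p$ we have $d(G_{t_i})=\dim_{\F_p}H^1(G_{t_i},\F_p)$, and for the ambient group $d(G)\ge d_p(G)=\dim_{\F_p}H^1(G,\F_p)$. Hence it suffices to prove that the restriction map
\[
\mathrm{res}\colon H^1(G,\F_p)\longrightarrow \bigoplus_{i=1}^{k} H^1(G_{t_i},\F_p)
\]
is surjective: this immediately yields $\dim_{\F_p}H^1(G,\F_p)\ge\sum_i \dim_{\F_p}H^1(G_{t_i},\F_p)=\sum_i d(G_{t_i})$, whence $d(G)\ge\sum_i d(G_{t_i})$. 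In particular, were some $G_{t_i}$ not finitely generated, surjectivity would force $H^1(G,\F_p)$ to be infinite dimensional, contradicting the finite generation of $G$; so all $G_{t_i}$ are finitely generated.

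The heart of the argument, and the step I expect to be the main obstacle, is to prove this surjectivity of restriction using $\C$-projectivity of the pile $\bfG$. Fixing an index $i$ and a nonzero $\chi_i\in\Hom(G_{t_i},\F_p)$, I would realise the extension of $\chi_i$ as the solution of a finite rigid $\C$-embedding problem for $\bfG$: first separate the $k$ orbits of $t_1,\dots,t_k$ by a clopen $G$-invariant partition of $T$ — available because $T/G$ is a profinite space and the system $\{G_t\}$ is continuous — and then build a rigid cover whose stabilizer over the $i$-th orbit is a copy of $\F_p$ receiving $\chi_i$ and whose stabilizers over the remaining orbits are trivial, so that rigidity over the $i$-th orbit pins the restriction to $\chi_i$ while the clopen separation forces vanishing on the other $G_{t_j}$. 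A solution is then a homomorphism $\chi\colon G\to\F_p$ with $\chi|_{G_{t_i}}=\chi_i$ and $\chi|_{G_{t_j}}=0$ for $j\neq i$. Letting $i$ and $\chi_i$ range over bases of the $H^1(G_{t_i},\F_p)$ produces preimages spanning the whole direct sum, giving surjectivity. The delicate points are ensuring $G$-equivariance and continuity of the morphisms to the finite base while the prescribed restriction is genuinely \emph{forced} by rigidity (and not merely permitted); this is precisely where the hypothesis that $\bfG$ is $\C$-projective is used, and it is the analogue, in the relative setting, of the statement that restriction to the distinguished subgroups is onto.

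Should a finite cover prove awkward to engineer directly, I would instead take the cover to be the free pro-$\C$ product $\coprod_{i=1}^{k}{}^\C\, G_{t_i}/\Phi(G_{t_i})$ mapping onto the direct product $\prod_{i=1}^{k} G_{t_i}/\Phi(G_{t_i})$, which is a rigid epimorphism of second countable piles, and invoke Remark \ref{second countable embedding} together with Remark \ref{matching definitions}. A solution then maps each $G_{t_i}$ onto a full conjugate of the $i$-th free factor over distinct orbits, and composing with the abelianisation of the free product once more exhibits $\F_p^{\,\sum_i d(G_{t_i})}$ as a quotient of $G$, yielding $d(G)\ge\sum_i d(G_{t_i})$ as required.
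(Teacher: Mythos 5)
Your reduction of the statement to the inequality $\sum_i d(G_{t_i})\le d(G)$ for finitely many orbit representatives, and the deduction of $|T_0/G|\le d$ and closedness of $T_0$ from it, are fine and match the paper. The gap is in the step you yourself flag as the heart of the matter: the surjectivity of the restriction map cannot be extracted from an embedding problem the way you set it up, because the mechanism is circular. In a rigid embedding problem for $(G,\G)$ the base morphism $\varphi\colon G\to A$ must be a globally defined homomorphism on $G$, and rigidity forces the solution to satisfy $\gamma|_{G_{t_i}}=(\alpha|_{\Delta})^{-1}\circ\varphi|_{G_{t_i}}$; so for rigidity to ``pin'' the restriction to a prescribed $\chi_i\in\Hom(G_{t_i},\F_p)$, the character $\chi_i$ must already be encoded in $\varphi|_{G_{t_i}}$ --- that is, you must already possess a homomorphism from all of $G$ extending $\chi_i$, which is exactly what you are trying to produce. (If instead you drop rigidity and take the base trivial, the local maps $\gamma_\Gamma$ are only a solvability hypothesis, not a constraint on the solution, so the trivial map solves the problem and you learn nothing.) Your fallback has the same defect: taking the base to be $\prod_i G_{t_i}/\Phi(G_{t_i})$ presupposes a morphism $\varphi\colon G\to\prod_i G_{t_i}/\Phi(G_{t_i})$ restricting to the Frattini quotient maps on the $G_{t_i}$, and the existence of such a $\varphi$ is equivalent to the inequality you want.

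The paper's proof escapes this by putting the direct product in the \emph{cover} rather than in the base. Arguing by contradiction from $\sum_t d(G_t)>d(G)$, it takes $A$ to be an arbitrary finite quotient of $G$ (so $\varphi$ exists for free) chosen so that the images $A_i=\varphi(G_{t_i})$ are pairwise non-conjugate, none contained in a conjugate of another, with $\varphi(\bigcup_t G_t)=\bigcup_i A_i$ and $\sum_i d(A_i)>d(G)$; the cover is $B=\coprod_{i}{}^\C A_i\amalg^\C F_\C$ with $F_\C$ free pro-$\C$ mapping onto $A$, with $\B=\{A_i^b\mid b\in B\}$ and $\alpha$ rigid. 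Rigidity together with the non-containment condition forces a solution $\gamma$ to carry each $G_{t_i}$ onto a full $B$-conjugate of $A_i$, and composing $\gamma$ with the natural epimorphism $B\to\prod_i A_i$ (under which every conjugate of $A_i$ maps onto the $i$-th direct factor) yields a surjection of $G$ onto $\prod_i A_i$, contradicting $d\bigl(\prod_i A_i\bigr)=\sum_i d(A_i)>d(G)$. The new information supplied by relative projectivity is precisely the independence of the images of the $G_{t_i}$ in the free product $B$, which is invisible in the base $A$; this is the ingredient your construction needs but does not produce. If you want to keep your cohomological formulation, the surjectivity of restriction onto $\bigoplus_i H^1(G_{t_i},\F_p)$ should be derived \emph{from} this argument (or from the embedding of Proposition \ref{into free prod}), not used to set up the embedding problem.
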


\begin{proof} Suppose on the contrary $\sum_t d(G_t)> d(G)$.  Then there exists an epimorphism $\varphi:G\longrightarrow A$ to a  finite group $A$ such that there  are  non-trivial non-conjugate subgroups $G_{t_1}, \ldots, G_{t_{n}}$ with the images $A_1=f(G_{t_1}), \ldots, A_{n}=f(G_{t_{n}})$, none of them is contained in a
conjugate of the other  and with $\varphi(\bigcup_{t\in T} G_t)=\bigcup_{i=1}^n A_i$ such that $\sum_i d(A_i)> d(G)$. Define $\A=\{\varphi(G_t)\mid {t\in T}\}$. Let $f:F_\C\longrightarrow A$ be an epimorphism from a free pro-$\C$ group   $F_\C$.  Form  a free pro-$\C$ product $B=\coprod_{i=1}^n{}^\C A_{i}\amalg{}^\C F_\C$. Put  $\B=\{A_{i}^b\mid b\in B\}$.  Let $\alpha:\B\longrightarrow  \A$ be a rigid morphism of pairs defined by sending $A_i$ to their copies in $A$.
Since $G$ is $\C$-projective reative to $\G$, the  embedding problem 
\begin{equation}\label{eqprosoluble}
\xymatrix{&&(G,\G)\ar[d]^{\varphi}\ar@{-->}[dll]_{\gamma}\\
             (B,\B)\ar[rr]_{\alpha}&& (A,\A)}
\end{equation} 
admits a solution $\gamma:(G,T)\longrightarrow (B,Y)$ (see \cite[Lemma 5.3.1]{HJ}).
Note that $\gamma(G_{t_i})$ is conjugate to $A_i$. So the natural epimorphism $B\longrightarrow \prod_{i=1}^n A_i$ restricts surjectively on $\gamma(G)$. As $A_i$ are non-trivial finite $p$-groups, the minimal number of generators of $\prod_{i=1}^n A_i$ is greater than $d$, a contradiction. Thus $\sum_t d(G_t)\leq d(G)$, $T_0/G$ is finite and so $T_0$ is closed in $T$. This finishes the proof.

\end{proof}

If $G_t$ are not pro-$p$ then $|T/G|$ can be greater than the minimal number of generators of $G$ (see \cite[Theorems 9.5.3 and 9.5.4]{RZ}), but we do not know whether it is always finite.

\bigskip 

We shall frequently use the following lemma on relative projectivity of subgroups of free pro-$\C$ products from \cite{HZ}. The statement is slightly different, but the proof is the same. 

\begin{lemma}\cite[Lemma 5.12]{HZ}\label{sub free prod}
Let $G=(\coprod_{x\in X}{}^\C G_x) \coprod{}^\C P$ be a free pro-$\C$ product of a continuous family of its subgroups $G_x$ and a projective pro-$\C$ group $P$. 
Let $H$ be a closed subgroup of $G$.
Put $$\calH = \{H \cap G_x^g \mid x \in X,\ g \in G\}.$$
Then there is a profinite $H$-space $T$ such that
\begin{itemize}
\item [(a)]
$\calH = \{H_t \mid t \in T\}$;

\item [(b)] $H_t\cap H_{t'}=1$ for $t\neq t'$;

\item [(c)] $\bfH = (H,T)$ is a $\C$-projective pile.

\item [(d)]
$G$ is projective relative to $\G$.
\end{itemize}
  Moreover,   $T$ is the disjoint union $T=\bigcup_{x\in X} G_x\backslash G$  on which $H$ acts by multiplication on the right viewed as a quotient space  of $X\times G$ via the map $X\times G\longrightarrow T$ given by $(x,g)\rightarrow (x, G_xg)$.
\end{lemma}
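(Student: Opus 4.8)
The plan is to produce the space $T$ by hand, deduce (a) and (b) from the structure theory of free pro-$\C$ products, obtain (d) directly from the universal property, and then extract the genuinely substantial statement (c) from (d) by a descent along the inclusion $H\le G$. Parts (a), (b) and the ``Moreover'' clause are essentially bookkeeping once $T$ is set up correctly; (d) is the standard fact that free pro-$\C$ products are relatively projective over their factors; and (c) is where the real work lies.

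First I would construct $T$ and settle (a), (b) and the ``Moreover'' clause at once. Set $T=\bigcup_{x\in X}G_x\backslash G$, the disjoint union of the right coset spaces, presented as the quotient of $X\times G$ under $(x,g)\mapsto (x,G_xg)$; the projective factor $P$ contributes nothing and is simply not tracked. Continuity of the system $\{G_x\mid x\in X\}$ (Definition~\ref{continuous system} and Lemma~\ref{systeproperties}) is precisely what makes this quotient Hausdorff, hence a profinite space, and makes the right multiplication action of $G$, and so of $H$, continuous. Then (a) is a direct computation: writing $G_x^g=g^{-1}G_xg$, the point $t=(x,G_xg)$ satisfies $G_xgh=G_xg$ exactly when $ghg^{-1}\in G_x$, i.e. $h\in G_x^g$, so its $H$-stabiliser is $H\cap G_x^g$; letting $(x,g)$ range over $X\times G$ yields $\calH=\{H_t\mid t\in T\}$. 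For (b) I would invoke the malnormality of free factors: in a free pro-$\C$ product two conjugates $G_x^g$ and $G_{x'}^{g'}$ either coincide or intersect trivially, and they coincide exactly when $x=x'$ and $G_xg=G_xg'$, that is, when $t=t'$ (the pro-$\C$ analogue of \cite[Theorem~9.1.12]{RZ}, cf.\ \cite{R}). Hence $t\neq t'$ forces $H_t\cap H_{t'}\le G_x^g\cap G_{x'}^{g'}=1$.

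For (d) I would give $G$ the pile structure on the same space $T$ but with the full right $G$-action, whose point stabilisers are exactly the members of $\G=\{G_x^g\}$, and solve an arbitrary finite rigid embedding problem $\varphi\colon(G,T)\to(A,\A)$, $\alpha\colon(B,\B)\to(A,\A)$ using the universal property of the free pro-$\C$ product. On each free factor $G_x$ the image $\varphi(G_x)$ lies in a stabiliser $\Delta\in\A$, and rigidity of $\alpha$ makes $\alpha$ restrict to an isomorphism from a unique member of $\B$ onto $\Delta$, so $\varphi|_{G_x}$ lifts through $\alpha$; on the projective factor $P$ the ordinary embedding problem $P\to A\leftarrow B$ is solvable because $P$ is projective. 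The universal property then assembles the partial lifts into a morphism $\gamma\colon(G,T)\to(B,\B)$, the continuity of the system ensuring the assembly is itself continuous. This gives (d); since $G$ is pro-$\C$, Remark~\ref{projective C-projective} promotes projectivity to $\C$-projectivity.

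The hard part will be (c), the inheritance of $\C$-projectivity by the pile $\bfH=(H,T)$. Here I would take a finite rigid $\C$-embedding problem $\varphi\colon\bfH\to\bfA$, $\alpha\colon\bfB\to\bfA$ and transfer it to $(G,T)$: because every $H$-stabiliser $H\cap G_x^g$ sits inside the $G$-stabiliser $G_x^g$, one can realise the given finite data inside an open subgroup of $G$ through which the relevant finite quotient factors, and then — using the Kurosh description of such open subgroups as free pro-$\C$ products of conjugates (the pro-$\C$ form of \cite[Theorem~9.1.9]{RZ}) together with the projectivity just established in (d) — solve the corresponding problem upstairs and restrict the solution back to $H$ to obtain the required $\gamma\colon\bfH\to\bfB$. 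This descent, in which the rigidity of $\alpha$ and the continuity of the system are used most delicately and in which the bookkeeping between $H$-orbits, the double cosets $H\backslash G/G_x$, and the space $T$ must be carefully controlled, is the technical core of the argument; it is precisely the step carried out in \cite[Lemma~5.12]{HZ}, the present statement differing from it only in notation.
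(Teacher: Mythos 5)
Your proposal is correct and takes essentially the same route as the paper: the paper's entire proof is the one-line remark ``This is the construction of $T$ in \cite[Lemma 5.12]{HZ}'', and your explicit description of $T$ as the quotient of $X\times G$, the stabiliser computation for (a), malnormality of free factors for (b), and the universal-property argument for (d) are precisely the content of that construction. For the technical core (c) you, like the paper, ultimately defer to \cite[Lemma 5.12]{HZ}, so there is no divergence of method.
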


\begin{proof} This is the construction of $T$ in \cite[Lemma 5.12]{HZ}.  
\end{proof}

\begin{rem}\label{second countable}
 Suppose $H$  in Lemma \ref{sub free prod} is second countable. Put $T_0=\{t\in T\mid G_t\neq 1\}$. Then by \cite[Proposition 5.4.2]{R} there exists  an epimorphism $\mu: \overline T_0\longrightarrow \widetilde T$ of $H$-spaces such that 

(a) $\widetilde T$ is second countable;

(b) $\mu_{|T_0}$ is an injection;

(c) $H_t=H_{\mu(t)}$ for every $t\in \overline T_0$;

(d) $\mu$ is a homeomorphism on the $H$-orbits of $\overline T_0$.  

\medskip
It follows then that $(H,\widetilde T)$ is a pile and $\calH\cup \{1\} = \{H_t \mid t \in \widetilde T\}\cup \{1\}$. Then  $(H,\widetilde T)$ is $\C$-projective by Remark \ref{matching definitions}.

\end{rem}

In the next proposition and in the rest of the section we shall use the external definition of a free pro-$\C$ product over a profinite space  as in  \cite[Section 5.1]{R} which is equivalent to Definition \ref{def:freeprod} in the sense that given a continuous system of pro-$\C$ groups $\{G_x\mid x\in X\}$ over a profinite space $X$ there exists a pro-$\C$ group $G$ containing $G_x$ as subgroups such that $G=\coprod_{x\in X}{}^\C\  G_x$ is a free pro-$\C$ product in the sense of Definition \ref{def:freeprod}.  The external definition is quite involved, based on the notion of profinite sheaves that are not used in the paper. Hence we shall use the notation of a free pro-$\C$ product as before even if we define a group as an external free pro-$\C$ product.

\begin{prop}\label{into free prod}   Let $(H,T)$ be a $\C$-projective pile such that there exists a continuous section $\sigma:T/H\longrightarrow T$. Then $H$ embeds into a free pro-$\C$-product $G=\coprod_{s\in\Sigma }{}^\C\  G_s\coprod{}^\C P$, where $\Sigma=Im(\sigma)$, $G_s$ is an isomorphic copy of $H_s$ and  $P$ is a projective pro-$\C$ group admitting a homomorphism $f:P\longrightarrow H$ such that $H=\langle f(P), H_s\mid s\in \Sigma\rangle$. Moreover, $\{H_t\mid t\in T\}=\{G_s^g\cap H\mid g\in G, s\in \Sigma\}$.

\end{prop}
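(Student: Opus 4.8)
\emph{The plan} is to realize $H$ as a retract of a free pro-$\C$ product built from its point stabilizers: I would construct a rigid epimorphism of piles onto $(H,T)$ and split it using the relative $\C$-projectivity of $(H,T)$.

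First I would set up the target group. Using the continuous section $\sigma$, identify $T/H$ with $\Sigma=\mathrm{Im}(\sigma)$, so that $\{H_s\mid s\in\Sigma\}$ is a continuous system of representatives of the stabilizers. For each $s\in\Sigma$ let $G_s$ be an isomorphic copy of $H_s$, forming a continuous system over $\Sigma$, and let $P$ be a projective pro-$\C$ group with a homomorphism $f\colon P\to H$ such that $H=\langle f(P),H_s\mid s\in\Sigma\rangle$; the simplest choice is to take $P$ to be a free pro-$\C$ group mapping onto $H$, whence $f(P)=H$. Set $G=\coprod_{s\in\Sigma}{}^\C G_s\coprod{}^\C P$ and let $Y=\coprod_{s\in\Sigma}G_s\backslash G$ be the associated $G$-space of Lemma \ref{sub free prod}, so that $(G,Y)$ is a pile whose nontrivial point stabilizers are exactly the conjugates $G_s^g$ and which, by that lemma applied to $G$ as a subgroup of itself, is $\C$-projective.

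Next I would build a morphism of piles $\rho\colon(G,Y)\to(H,T)$. On groups, let $\rho$ be the chosen isomorphism $G_s\to H_s\hookrightarrow H$ on each free factor and $f$ on $P$; continuity over $\Sigma$ and the universal property of the free pro-$\C$ product produce $\rho\colon G\to H$, and the generation hypothesis makes it surjective. On spaces, send $G_s\cdot g\mapsto\sigma(s)\cdot\rho(g)$; this is well defined because $\rho(G_s)=H_s=H_{\sigma(s)}$, continuous because $\sigma$ is, and $\rho$-equivariant. A short check using surjectivity of $\rho$ and the identification $\Sigma\cong T/H$ shows that $\rho$ is an epimorphism of piles, and that it is rigid, since $\rho$ carries each $G_s$ isomorphically onto $H_s$ and induces a homeomorphism $\Sigma\to T/H$ of orbit spaces. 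I would then split $\rho$: consider the embedding problem for $(H,T)$ given by $\varphi=\mathrm{id}_{(H,T)}$ and $\alpha=\rho$. A solution is precisely a morphism $\gamma\colon(H,T)\to(G,Y)$ with $\rho\circ\gamma=\mathrm{id}_H$, and any such $\gamma$ is injective because it has the left inverse $\rho$, giving the desired embedding. Since $\C$-projectivity of a pile only supplies solutions to \emph{finite} embedding problems, the main work, and the delicate point of the proof, is to pass from this to the infinite cover $\rho$. I would do this by the standard inverse-limit argument: write $(G,Y)=\varprojlim_i(G_i,Y_i)$ as a filtered inverse limit of finite piles through which $\rho$ factors as finite $\C$-embedding problems $\varphi_i\colon(H,T)\to\bfA_i$, $\alpha_i\colon(G_i,Y_i)\to\bfA_i$ (arranging the finite quotients so that these are genuine, and where needed rigid, $\C$-embedding problems is the technical heart); by the relative $\C$-projectivity of $(H,T)$, via Remark \ref{matching definitions} and Definition \ref{def proj}, each finite problem has a solution $\gamma_i$, and the nonempty finite solution sets form an inverse system, so by compactness a compatible thread yields $\gamma=\varprojlim_i\gamma_i$.

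Finally I would identify the stabilizers. Viewing $H=\gamma(H)\le G$, Lemma \ref{sub free prod} applied to this subgroup shows that $\{\gamma(H)\cap G_s^g\mid s\in\Sigma,\ g\in G\}$ is precisely the stabilizer family of a $\C$-projective pile on $H$. Since $\gamma$ is a morphism of piles, each $H_t$ is carried into the stabilizer $G_s^g$ of the point $\gamma(t)\in Y$ (which lies in the orbit indexed by $s$ by rigidity), so $H_t\le\gamma(H)\cap G_s^g$. Conversely, $\rho$ is injective on $G_s^g$ and $\rho\circ\gamma=\mathrm{id}$, so for $x=\gamma(h)\in\gamma(H)\cap G_s^g$ one has $h=\rho(x)\in\rho(G_s^g)=H_t$ with $t=\sigma(s)\cdot\rho(g)$, whence $x\in\gamma(H_t)$. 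Therefore $\{H_t\mid t\in T\}=\{G_s^g\cap H\mid g\in G,\ s\in\Sigma\}$, which completes the proof.
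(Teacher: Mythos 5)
Your proposal is correct and follows essentially the same route as the paper: both build the free pro-$\C$ product $G$ with a rigid epimorphism of piles $(G,\bigcup_{s}G_s\backslash G)\to(H,T)$ induced by the section $\sigma$, and then split it via the relative $\C$-projectivity of $(H,T)$, the resulting section being injective because it has a left inverse. The only differences are cosmetic: where you sketch the inverse-limit reduction to finite rigid embedding problems, the paper simply invokes \cite[Proposition 5.4]{HZ} (together with \cite[Construction 4.3]{HZ} for the pile structure on $G$), and you additionally write out the verification of the ``Moreover'' clause on stabilizers, which the paper leaves implicit.
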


\begin{proof} Let $\alpha:G\longrightarrow H$ be an epimorphism that extends $f$ and  sends $G_s$ onto their isomorphic copies in $H$. Put $S=\bigcup_{s\in \Sigma} G_s\backslash G$. We view $S$ as the quotient space of the profinite space $\Sigma \times G$
via the map $\pi \colon \Sigma \times G \to S$
given by
$(s,g) \mapsto (s, G_s g)$ (cf. Lemma \ref{sub free prod}).
By \cite[Proposition 5.2.3]{R}
this is a profinite space.

 By \cite[Construction 4.3]{HZ} $(G,S)$ is a pile and so extending $\alpha$ to $S\rightarrow T$ by $\alpha(G_sh)=s\alpha(h)$ one gets a rigid epimorphism $\alpha:(G,S)\longrightarrow (H,T)$. Since $(H,T)$ is $\C$-projective, by \cite[Proposition 5.4]{HZ}  the embedding problem \begin{equation}
\xymatrix{&& (H,T)\ar[d]^{id}\ar@{-->}[dll]_{\gamma}\\
             (G,S)\ar[rr]_{\alpha}&& (H,T)}
\end{equation}  has a solution   which is clearly an embedding.

\end{proof} 

\begin{coro}\label{embedding in prosoluble free product}  Let $G = \coprod_{x\in X} G_x$ be a free profinite product of  pro-$\C$ groups over a profinite space $X$. Then
every second countable pro-$\C$ subgroup $H$ of $G$ is isomorphic to a closed subgroup of the free pro-$\C$ product $$\coprod_{\sigma\in \Sigma}{}^\C\  H_\sigma \amalg{}^\C F_\C ,$$ where  each non-trivial  $ H_\sigma$ equals $H\cap G_x^{g}$  for some $g\in G$  and $F_\C$ is a second countable free pro-$\C$ group. \end{coro}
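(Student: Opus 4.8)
The plan is to assemble, in order, the three preceding results: Lemma \ref{sub free prod}, Remark \ref{second countable}, and Proposition \ref{into free prod}, reducing the corollary to a bookkeeping exercise once the right pile has been produced. First I would apply Lemma \ref{sub free prod} to $G=\coprod_{x\in X}G_x$ with the trivial projective factor $P=1$ and with $\C$ taken to be the class of \emph{all} finite groups, so that "pro-$\C$" reads as "profinite" and $G$ is exactly the given free profinite product. This yields a profinite $H$-space $T$ with $\calH=\{H\cap G_x^g\mid x\in X,\ g\in G\}=\{H_t\mid t\in T\}$, with $H_t\cap H_{t'}=1$ for $t\neq t'$, and with $(H,T)$ a projective pile. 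Since $H$ is pro-$\C$ by hypothesis, Remark \ref{projective C-projective} upgrades this at once to the statement that $(H,T)$ is $\C$-projective.

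Next I would use that $H$ is second countable to cut $T$ down to a second countable space. Concretely, Remark \ref{second countable} furnishes a second countable $H$-space $\widetilde T$ together with the properties that $(H,\widetilde T)$ is again a $\C$-projective pile and $\calH\cup\{1\}=\{H_t\mid t\in\widetilde T\}\cup\{1\}$; in particular every non-trivial stabilizer $H_t$, $t\in\widetilde T$, is still of the form $H\cap G_x^g$. To invoke Proposition \ref{into free prod} I then need a continuous section $\sigma\colon \widetilde T/H\to\widetilde T$ of the orbit map. This is the one genuinely external ingredient and the step I expect to be the main obstacle: it is precisely where second countability is indispensable, since the quotient map $\widetilde T\to\widetilde T/H$ is a surjection of second countable profinite spaces, and such a surjection admits a continuous section (cf. \cite{R}), whereas no continuous transversal need exist without the countability hypothesis, which is exactly the reason the corollary is stated only in the second countable case.

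Finally I would apply Proposition \ref{into free prod} to the $\C$-projective pile $(H,\widetilde T)$ equipped with the section $\sigma$, choosing the projective factor $P$ to be a free pro-$\C$ group. Since $H$ is second countable it possesses a countable topological generating set converging to $1$; letting $F_\C$ be the free pro-$\C$ group on such a set (hence second countable) and $f\colon F_\C\to H$ the induced homomorphism, the image $f(F_\C)$ is dense and therefore all of $H$, so a fortiori $H=\langle f(F_\C),\,H_\sigma\mid \sigma\in\Sigma\rangle$. As free pro-$\C$ groups are projective, the hypotheses of Proposition \ref{into free prod} are met, and it produces an embedding of $H$ into the free pro-$\C$ product $\coprod_{\sigma\in\Sigma}{}^{\C} H_\sigma \amalg{}^{\C} F_\C$, where $\Sigma=\mathrm{Im}(\sigma)$ is second countable. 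By the previous paragraph each non-trivial $H_\sigma$ is a stabilizer $H_t=H\cap G_x^g$ for suitable $x\in X$ and $g\in G$, and $F_\C$ is a second countable free pro-$\C$ group, which is exactly the asserted conclusion.
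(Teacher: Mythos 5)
Your proposal is correct and follows essentially the same route as the paper: the paper likewise combines Lemma \ref{sub free prod} (via Remark \ref{second countable}) to get a second countable $\C$-projective pile $(H,\widetilde T)$ with stabilizers $H\cap G_x^g$, takes a continuous section of $\widetilde T\to\widetilde T/H$ using second countability, and feeds this into Proposition \ref{into free prod}. Your explicit choice of $P=F_\C$ as a second countable free pro-$\C$ group surjecting onto $H$ merely spells out a detail the paper leaves implicit.
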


\begin{proof} Let $H$ be a second countable pro-$\C$ subgroup of $G$. By Remark \ref{second countable} there exists  a second countable profinite space $T$ on which $H$ acts continuously such that $(H,T)$ is a projective pile and $$\{H_t\mid t\in T\}\cup \{1\}= \{H \cap G_x^g \mid x \in X,\ g \in G\}\cup \{1\}.$$   So there exists a continuous section $\sigma: T/H\longrightarrow T$ (cf. \cite[Lemma 5.6.7]{RZ}). By Remark \ref{projective C-projective} $(H,T)$ is  $\C$-projective. Then from   Proposition \ref{into free prod} we get an embedding of $H$  into a free pro-$\C$ product $$\coprod_{\sigma\in \Sigma}{}^\C   H_\sigma\amalg^\C F_\C,$$
where $\Sigma=Im(\sigma)$ and $F_\C$ is  second countable free pro-$\C$.  

\end{proof}

\begin{theo}\label{prosoluble pile} Let $G$ be a profinite group, projective relative to a continuous family of subgroups $\G=\{G_t\mid t\in T\}$ closed for conjugation.  Then one of the following holds:

\begin{enumerate}

\item[(i)] There a prime $p$ such that all $G_t, t\in T$ are pro-$p$ and $G$ is $\calS$-projective relative to $\G$ (where $\calS$ is the class of all finite soluble groups).

\item[(ii)]   $G= \widehat\Z_\pi\rtimes C$ is a  profinite Frobenius groups (i.e., $C$ is cyclic, $|C|$ divides $p-1$ for any $p\in \pi$ and $[c,z]\neq 1$ for all $1\neq c\in C, 0\neq z\in \widehat \Z_\pi$). Moreover, $T_0=\{ t\in T\mid G_t\neq 1\}$ is closed in $T$  and $|T_0/G|=1$. Besides $\{ G_t\mid t\in T_0\}=\{C^z\mid z\in \widehat\Z_\pi\}$.

\item[(iii)] $|T_0|=1$ and $G_t=G$ for the unique $t\in T_0$.

\end{enumerate}  

\end{theo}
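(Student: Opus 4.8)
The plan is to dichotomize on the prime type of the stabilizers and to reduce the non-pro-$p$ case to the classification already established in Theorem \ref{prosoluble general int}. First I would dispose of the easy alternative: if there is a single prime $p$ with every $G_t$ pro-$p$, then Remark \ref{projective C-projective} gives at once that $G$, being projective relative to $\G$, is also $\calS$-projective relative to $\G$; together with the pro-$p$ hypothesis this is exactly conclusion (i). So from now on I assume the $G_t$ are \emph{not} all pro-$p$ for one prime, and must reach (ii) or (iii). Throughout this case I would use that $G$ is prosoluble (this is where the solubility hypothesis of Theorem \ref{relatively projective} is needed, so that Theorem \ref{prosoluble general int} can be invoked).

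The idea is to convert the relative-projectivity hypothesis into an embedding of $G$ into a free profinite product. After realizing $\G$ as the family of point stabilizers of a projective pile $(G,T)$ (Remark \ref{matching definitions}) and choosing a continuous section $\sigma\colon T/G\to T$, Proposition \ref{into free prod} embeds $G$ into $\Phi=\coprod_{s\in\Sigma}G_s\amalg P$, where $\Sigma=\mathrm{Im}(\sigma)$, each $G_s$ is an isomorphic copy of a stabilizer, $P$ is projective, and the stabilizers of $G$ are recovered as $\{G_t\}=\{G_s^{g}\cap G\mid g\in\Phi,\ s\in\Sigma\}$. Viewing $P$ as one more free factor of $\Phi$, the intersections of $G$ with the conjugates of the factors are the stabilizers $G_t$ together with the groups $G\cap P^{g}$. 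Since the $G_t$ are not all pro-$p$, these intersections are not all pro-$p$, and, $G$ being prosoluble, the hypotheses of Theorem \ref{prosoluble general int} are in force.

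Now Theorem \ref{prosoluble general int} leaves exactly two possibilities. If $G$ lies in a conjugate $G_s^{g}$ of a factor, then $G=G\cap G_s^{g}$ is itself one of the stabilizers, say $G=G_{t_0}$; but a stabilizer equal to all of $G$ meets every other stabilizer, so the pile axiom $G_t\cap G_{t'}=1$ for $t\neq t'$ forces $T_0=\{t_0\}$, which is conclusion (iii). The containment $G\subseteq P^{g}$ is excluded, since by disjointness of distinct factors it would make every $G\cap G_s^{g'}$ trivial, contradicting the presence of a nontrivial stabilizer. In the remaining case $G$ lies in no conjugate of a factor, and Theorem \ref{prosoluble general int} yields $G\cong\widehat\Z_\pi\rtimes C$ with $C$ finite cyclic. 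For the remaining assertions of (ii) I would argue from this structure: by Theorem \ref{pop} the nontrivial intersections are finite and conjugate in $G$, and the maximal finite subgroups of $\widehat\Z_\pi\rtimes C$ are precisely the conjugates $C^{z}$, $z\in\widehat\Z_\pi$, forming a single conjugacy class; hence $\{G_t\mid t\in T_0\}=\{C^{z}\mid z\in\widehat\Z_\pi\}$, the set $T_0$ is a single $G$-orbit, which is compact and therefore closed, and $|T_0/G|=1$.

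The main obstacle is that Proposition \ref{into free prod} requires a continuous section $\sigma\colon T/G\to T$, which is guaranteed only when the base is second countable (as exploited in Corollary \ref{embedding in prosoluble free product} via \cite[Lemma 5.6.7]{RZ}), whereas here $T$ is an arbitrary profinite space. I would handle this exactly as Theorem \ref{prosoluble general int} was handled for general $T$: approximate the continuous family by its images over the finite quotients of $T$ coming from clopen partitions, where a section is automatic and the finite-factor Theorem \ref{prosoluble} applies, and then recover the structure of $G$ by the inverse-limit argument indicated in the Remark following the proof of Theorem \ref{prosoluble general int} (cf. \cite[Remark 4.5.9]{HJ}). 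Verifying that the Frobenius isomorphism type and the stabilizer description in (ii) persist through this limit is the delicate point; the pro-$p$ alternative and the single-factor case (iii) are stable under the reduction and cause no trouble.
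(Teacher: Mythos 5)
Your overall strategy---split on whether the $G_t$ are all pro-$p$, and in the negative case realize $G$ inside a free profinite product so that Theorem \ref{prosoluble general int} classifies it---is sound in outline, and your derivation of the auxiliary claims in (ii) (all nontrivial stabilizers conjugate to $C$ via Pop's theorem, $T_0$ a single compact orbit) would go through once the embedding is available. The gap is exactly where you locate it, and your proposed repair does not close it. Proposition \ref{into free prod} needs a continuous section $T/G\to T$, which is guaranteed only under second countability (this is precisely why Theorem \ref{characterization} carries a second-countability hypothesis and why Haran's Problem 5.6(a) remains open), whereas Theorem \ref{prosoluble pile} has no such hypothesis. Your fallback---clopen partitions of $T$ plus the inverse-limit argument from the proof of Theorem \ref{prosoluble general int}---does not apply here: that argument presupposes that the group is \emph{already} a subgroup of the ambient free product $\coprod_{t\in T}G_t$ and merely regroups its factors, whereas in the present situation producing any embedding of $G$ into a free product is the whole difficulty. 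Passing to quotient spaces of $T$ also destroys the pile condition $G_t\cap G_{t'}=1$, since stabilizers of blocks are larger than stabilizers of points.

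The paper circumvents the embedding entirely. For each finite quotient $\varphi\colon G\to A$ it chooses representatives $A_1,\dots,A_n$ of the conjugacy classes of $\{\varphi(G_t)\mid t\in T\}$, forms the second countable free profinite product $B=\coprod_{i=1}^n A_i\amalg F$ with $F$ free of finite rank mapping onto $A$, and solves the resulting rigid embedding problem using relative projectivity (Remark \ref{second countable embedding}). The image $\gamma(G)$ is then a prosoluble subgroup of a free product with \emph{finitely many} factors, so the already-proved finite-factor Theorem \ref{prosoluble} applies to it directly; the structure of $G$ (Frobenius of type $\widehat\Z_\pi\rtimes C$ via \cite[Corollary 7.1.8]{R} in the Frobenius case, or all $\varphi(G_t)$ being $p$-groups in the non-Frobenius case) is then read off from the inverse system of these images as $A$ ranges over the finite quotients of $G$, with the stabilizer description in (ii) completed by the pile condition and Schur--Zassenhaus. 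No section on $T/G$ and no second countability are needed. This finite-embedding-problem mechanism is the idea your proposal is missing; without it your argument proves the theorem only for second countable $G$ (or for $T/G$ admitting a continuous section), which is strictly weaker than the statement.
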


\begin{proof}

 Suppose (iii) does not hold. If $G$ is a profinite Frobenius group, then $G=K\rtimes C$, where $C$ is finite (see \cite[Theorem 4.6.9]{RZ}) and $K$ and $C$ are coprime. By \cite[Lemma 4.6.5]{RZ} $G=\varprojlim G/V $, where $G/V=(K/V)\rtimes (HV/V)$ and $V$ runs through the collection of all open normal subgroups of $G$ contained in $K$. Put $G_V=G/V$ and  let $\varphi_V:G\longrightarrow G/V$ be the natural epimorphism. Let $A_1, \ldots A_n$ be the set of representatives of the conjugacy classes of the set $\A=\{\varphi(G_t), t\in T\}$ of images of $G_t$ in $G_V$.  Let $f:F\longrightarrow G_V$ be an epimorphism of a free profinite group of finite rank to $G_V$. Form a free profinite product $B=\coprod_{i=1}^n A_{i}\amalg F$ and let $\B=\{A_i^b\mid b\in B\}$.  Thus one has a rigid  embedding problem

\begin{equation}\label{frobenius}
\xymatrix{&&(G,\G)\ar[d]^{\varphi}\ar@{-->}[dll]_{\gamma}\\
             (B,\B)\ar[rr]_{\alpha}&& (G_V,\A)}
\end{equation} 
where $\alpha$ extends  $f$ and the natural isomorphisms of $A_i$ to their copies in $G/V$. 
Since $G$ is   projective relative to $\G$, and hence by Remark \ref{second countable embedding} so is \eqref{frobenius} as $B$ is second countable. This means that  there exists a solution $\gamma$ of \eqref{frobenius}. Then $\gamma(G)$ is a profinite Frobenius group (as $ker(\gamma)\leq K$ and so one can apply \cite[Theorem 4.6.8]{RZ} to $\gamma(G)$). By \cite[Corollary 7.1.8 (c2)]{R} (a profinite Frobenius group can only appear in (a) and (c2) of this corollary, but (a) cannot hold by assumption on $G_V$) the group  $\gamma(G)$ is a semidirect product of a cyclic projective group and cyclic finite group, i.e. $\gamma(G)=\gamma(K)\rtimes \gamma(C)$ with $\gamma(K)$ projective cyclic and $\gamma(C)$ finite cyclic. As it happens for every $V$ we deduce that $K\cong \widehat\Z_\pi$ and $C$ is finite cyclic. As $G_t\cap G_{t'}=1$ for $t\neq t'$ (see \cite[Proposition 5.7]{HZ}) the normalizer of any subgroup $H_t\leq G_t$ is contained in $G_t$. Hence $K\cap G_t=1$ for every $t\in T$ and since we assumed that $G\neq G_t$, we deduce that $G_t$ are finite cyclic.  
 
By the profinite version of Schur-Zassenhaus theorem (\cite[Theorem 2.3.15]{RZ}) all complements of  $\widehat \Z_\pi$ in $G$ are conjugate so that $G_t\cong C$ for each $t\in T_0$ and  $|T_0/G|=1$. It follows that   $\{ G_t\mid t\in T_0\}=\{C^z\mid z\in \widehat\Z_\pi\}$. Thus (ii) holds in this case.
 
 \medskip
 Suppose now that $G$ is not Frobenius. We show that for some prime $p$,  $G_t$ is pro-$p$ for all $t\in T$. Suppose on the contrary $G_t$ are  not all pro-$p$ for any prime $p$.   Then there exists an epimorphism $\varphi:G\longrightarrow A$ to a finite group $A$ such that $A$ is non-cyclic  and not Frobenius (see \cite[Theorem 4.6.9 (a),(c)]{RZ}), and the images of $\varphi(G_{t}), t\in T$ in $A$ are not all finite $p$-groups for any prime $p$. As before let $A_1, \ldots A_n$ be the set of representatives of the conjugacy classes of the set $\A=\{\varphi(G_t), t\in T\}$ of images of $G_t$ in $A$.   Let $f:F\longrightarrow A$ be an epimorphism of a free profinite group of finite rank to $A$.  Form a free profinite product $B=\coprod_{i=1}^n A_{i}\amalg F$ and let $\B=\{A_i^b\mid b\in B\}$.  Thus one has a rigid  embedding problem

\begin{equation}\label{free}
\xymatrix{&&(G,\G)\ar[d]^{\varphi}\ar@{-->}[dll]_{\gamma}\\
             (B,\B)\ar[rr]_{\alpha}&& (A,\A)}
\end{equation} 
where $\alpha$ extends $f$ and the natural isomorphisms of $A_i$ to their copies in $A$. 
Since $G$ is   projective relative to $\G$, by Remark \ref{second countable embedding} so is \eqref{frobenius} as $B$ is second countable. This means that there exists a solution $\gamma$ of \eqref{free}. But $\gamma(G)$ is Frobenius by Theorem \ref{prosoluble}, contradicting that $\alpha(\gamma(G))=\varphi(G)$ is not. 

Thus all $G_t$s are pro-$p$.  It remains to show that $G$ is $\calS$-projective relative to $\G$. But this follows from Remark \ref{projective C-projective}.
The proof is finished. \end{proof}

\begin{lemma}\label{relative projectivity}
Let $G_s=\coprod_{i=1}^n{}^s K_i \amalg^s P$ be a free prosoluble product of finite $p$-groups $K_i$ and a  finitely generated  prosoluble projective group $P$. Then $G_s$ is projective relative to $\{K_i^g\mid g\in G_s, i=1, \ldots, n\}$.

\end{lemma}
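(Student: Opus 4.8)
The plan is to reach the statement in two stages: first that $G_s$ is $\calS$-projective relative to $\G=\{K_i^g\mid g\in G_s,\ i=1,\dots,n\}$, which is soft, and then to upgrade this to full projectivity relative to $\G$, which is where the hypothesis that the $K_i$ are $p$-groups (for a single prime $p$) is essential. For the first stage I would apply Lemma \ref{sub free prod} with $\C=\calS$ to the subgroup $H=G_s$ of the free prosoluble product $G_s=\coprod_{i=1}^n{}^s K_i\amalg^s P$ itself: then $\calH=\{G_s\cap K_i^g\}=\{K_i^g\}=\G$, the pile $(G_s,T)$ is $\calS$-projective by part (c), and $P$ supplies the required projective factor. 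Thus every finite $\calS$-embedding problem for $(G_s,\G)$ is solvable. Since $G_s$ is finitely generated and $\G$ is a pro-$p$ family, Theorem \ref{prosoluble pile}(i) tells us that full projectivity relative to a pro-$p$ family is exactly the expected outcome; the task is to produce it.

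For the upgrade I would take an arbitrary finite embedding problem $\varphi\colon(G_s,\G)\to(A,\A)$, $\alpha\colon(B,\B)\to(A,\A)$ with $B$ finite, which by \cite[Corollary 5.1.5]{HJ} I may assume rigid, and reduce it to a \emph{soluble} one solvable by the first stage. Since $G_s$ is prosoluble, $\varphi(G_s)$ is soluble, so after replacing $A$ by $\varphi(G_s)$ and $B$ by $\alpha^{-1}(\varphi(G_s))$ I may assume $\varphi$ is onto and $A$ is soluble. Write $R=\ker\alpha$ and let $L$ be a \emph{minimal supplement} to $R$ in $B$, that is $LR=B$ with $L$ minimal such. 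A standard Frattini argument gives $L\cap R\le\Phi(L)$; as $\Phi(L)$ is nilpotent and $L/(L\cap R)\cong B/R=A$ is soluble, the supplement $L$ is itself soluble while still satisfying $\alpha(L)=A$. This is the device that turns a possibly non-soluble cover $B$ into a soluble one.

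The crux is then to show that the given problem restricts to a genuine finite $\calS$-embedding problem with cover $(L,\B_L)\to(A,\A)$, i.e. that each factor image fits inside $L$ compatibly with $\B$. For the projective factor this is immediate: since $P$ is projective and $\alpha|_L\colon L\to A$ is onto, I lift $\varphi|_P$ through $\alpha|_L$ to obtain $\gamma_P\colon P\to L$. For the finite $p$-groups $K_i$ I would use rigidity: the data provide $p$-subgroups $\Delta_i\in\B$ with $\alpha|_{\Delta_i}$ injective and $\alpha(\Delta_i)=\varphi(K_i)$, so $\Delta_i$ is a complement to $R$ in $\alpha^{-1}(\varphi(K_i))$; using that $L\cap\alpha^{-1}(\varphi(K_i))$ is a supplement to $R$ there, a Sylow/Schur--Zassenhaus argument should conjugate $\Delta_i$ by an element of $R$ into $L$, preserving its image $\varphi(K_i)$ and its membership in the conjugation-closed family $\B$. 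Having placed all factor images in the soluble group $L$, I set $\B_L$ to be the $L$-conjugates of the $\Delta_i$ inside $L$; the resulting problem is a finite $\calS$-embedding problem, and the first stage yields $\gamma\colon G_s\to L\le B$ solving the original one.

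I expect the main obstacle to be exactly this last placement step when $p$ divides $|R|$ (equivalently, when a minimal normal subgroup of $B$ inside $R$ is non-abelian of order divisible by $p$): there the complement $\Delta_i$ need not be $R$-conjugate into an arbitrary soluble supplement, and one must instead arrange a soluble supplement $L$ with $L\cap R$ a $p'$-group, i.e. containing a suitable $p$-complement. This is precisely the point where the single-prime pro-$p$ hypothesis on $\{K_i^g\}$ is indispensable --- consistently with Theorem \ref{prosoluble pile}(i), where relative projectivity of a prosoluble group forces the family to be pro-$p$ --- and it is the step I would treat with the most care.
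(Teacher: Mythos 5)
Your first stage is fine and is essentially what the paper does in Proposition \ref{single projective}: applying Lemma \ref{sub free prod} with $\C=\calS$ to $H=G_s$ inside itself gives $\calS$-projectivity relative to $\G$ at no cost. The gap is in the second stage, and you have in effect flagged it yourself. The entire content of the lemma is the upgrade from $\calS$-projectivity to projectivity, i.e.\ solving a finite rigid embedding problem whose cover $B$ is an arbitrary finite group, and your reduction to a soluble cover stalls exactly at the step you defer. After choosing a soluble supplement $L$ to $R=\ker\alpha$ (your Frattini argument for the solubility of a minimal supplement is correct), you must place inside $L$, for each $i$, a member of $\B$ mapping isomorphically onto $\varphi(K_i)$; when $p$ divides $|L\cap R|$ no Sylow or Schur--Zassenhaus argument conjugates $\Delta_i$ into a prescribed soluble supplement. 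Concretely, take $B=S_5$, $R=A_5$, $A=C_2$, $p=2$ and $\Delta_1=\langle(1\,2)\rangle$: the minimal soluble supplement $L=\langle(1\,2\,3\,4)\rangle$ has $L\cap R=\langle(1\,3)(2\,4)\rangle$ of order $2$ and contains no conjugate of $\Delta_1$, since its unique involution is even. So the quantifiers in your plan are in the wrong order --- one cannot first fix $L$ and then place the $\Delta_i$; the supplement and the placements must be produced simultaneously, and no mechanism for this is given. Since this is the only genuinely hard point of the lemma (it is where the single-prime hypothesis must be used), the proof is incomplete.

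It is worth recording that the paper's route is entirely different and sidesteps this finite-group-theoretic difficulty. It first absorbs $P$: the kernel of the map $\coprod_{i=1}^n{}^s(K_i\times C_{p^2})\longrightarrow C_{p^2}$ killing the $K_i$ has Kurosh decomposition $\coprod_{i=1}^n{}^s K_i\amalg^s F_s$ with $F_s$ free prosoluble of rank at least $2$ (for $n>1$; the case $n=1$ is reduced to $n=2$), so $P$ embeds in $F_s$ and $G_s$ embeds in a free prosoluble product of finite $p$-groups. It then invokes Ersoy--Herfort \cite[Theorem 1.4]{EH}: the free \emph{profinite} product $\coprod_{i=1}^n K_i$ contains $\coprod_{i=1}^n{}^s K_i$ as a retract with factors conjugate to factors. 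Hence $G_s$ is a subgroup of a free profinite product of the $K_i$, and Lemma \ref{sub free prod} for the class of all finite groups, combined with the conjugacy of maximal finite subgroups of free profinite products (\cite[Corollary 7.1.3]{R}), gives projectivity relative to $\{K_i^g\}$ directly. The single-prime hypothesis is consumed by the Ersoy--Herfort theorem, which is precisely the external input replacing your unproved placement step; completing your route would amount to an independent proof of Corollary \ref{generalization relative projectivity} by direct finite group theory, which is not obviously easier than the quoted results.
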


\begin{proof} We observe first that $G_s$ is a subgroup of $L_s=\coprod_{i=1}^n{}^s (K_i\times C_i)$, where $C_i$ is a cyclic group of order $p^2$. Indeed, the kernel $U$ of the homomorphism $L_s\longrightarrow C_{p^2}$ that send all $C_i$  isomorphically to $C_{p^2}$ and all $K_i$ to 1 has the Kurosh decomposition $U=\coprod_{i=1}^n K_i \amalg^s F_s$, where $F_s$ is a free prosoluble group of rank $1+(n-1)p^2-n=(p^2-1)n-p^2+1\geq 2$ unless $n=1$ (see \cite[Theorem 9.1.9]{RZ}). So, if $n>1$,   $P$ embeds in $F_s$ (see \cite[Lemma 7.6.3]{RZ}) and we have an embedding of $G_s$ into $L_s$.

If $n=1$, then  $K_1\amalg^s P$ embeds in $K_1\amalg^s K_1\amalg^s P$ so that we can use the previous embedding.

Hence by Lemma \ref{sub free prod} we may assume w.l.o.g. that $G_s=\coprod_{i=1}^n{}^s K_i$. By \cite[Theorem 1.4]{EH} $G_s$ retracts to a free profinite product  $\coprod_{i=1}^n K_i$ such that the images of $K_i$ are conjugate to the corresponding free factors and so applying Lemma \ref{sub free prod} again together with the fact that any maximal finite subgroup of $G_s$ is conjugate to one of $K_i$s (cf. \cite[Corollary 7.1.3]{R}), we deduce the result.
\end{proof}

{ \it Proof of Theorem \ref{relatively projective}}.

Suppose $G$ is projective relative to $\G=\{G_t, t\in T\}$. Then, by Theorem \ref{prosoluble pile},  items (i) or (ii) or (iii) hold.

\medskip
Converse. Suppose (i) or (ii) or (iii) hold. If (iii) holds, then $G$ is clearly projective relative to $\{G_t\mid t\in T\}$. If (ii) holds, then by \cite[Corollary 5.2]{GH} $G$ embeds into a free profinite product $G_0=C\amalg C$.  By Lemma \ref{sub free prod} $G$ is projective relative to $\G$. 

Suppose now (i) holds. To prove that $G$ is projective relative to $\G$, we need to solve a finite rigid  embedding problem 
\begin{equation}\label{finite}
\xymatrix{&&(G,\G)\ar[d]^{\varphi}\ar@{-->}[dll]_{\gamma}\\
             (B,\B)\ar[rr]_{\alpha}&& (A,\A)}
\end{equation} 

Choose representatives $A_1,A_2,\ldots A_n$  of the conjugacy classes of subgroups in  $\A$ and  subgroups $B_i\in \B$  such that $\alpha(B_i)=A_i$. Let $f:F\longrightarrow B$ be an epimorphism of a free profinite group $F$ of finite rank to $B$. As $A$ is soluble, $\alpha f$ factors through    a free prosoluble group $F_s$ of the same finite rank as $F$, i.e. there are epimorphisms $\eta:F\longrightarrow F_s$ and $f_s:F_s\longrightarrow A$ such that $\alpha f=f_s\eta$.  Form a free profinite product $B'=\coprod_{i=1}^n B_{i}\amalg F$ and a free prosoluble product $B_s=\coprod_{i=1}^n{}^s A_{i}\amalg^s F_s$. Put $\B'=\{ B_i^b\mid b\in B'\}$ and $\B_s=\{ B_i^b\mid b\in B_s\}$.  Then one has the following commutative diagram:

\begin{equation}\label{finite free}
\xymatrix{&&(G,\G)\ar[dd]^{\varphi}\ar@{-->}[dl]_{\gamma_s}\\
(B',\B')\ar[d]_{\pi}\ar[r]^{\rho}& (B_s, \B_s)\ar[dr]^{\alpha_s}&\\
             (B,\B)\ar[rr]_{\alpha}&& (A,\A)}
\end{equation} 
where $\pi:B'\longrightarrow B$ and $\alpha_s:B_s\longrightarrow B$ are the epimorphisms that send $B_i$ to their copies in $B$ and $\pi_{|F}=\eta$.
Since $G$ is $\calS$-projective relative to $\G$, the  embedding problem $(\varphi, \alpha_s)$
admits a solution $$\gamma_s:(G,\G)\longrightarrow (B_s,\B_s)$$ (see Remark \ref{second countable embedding}) so that $\alpha_s\gamma_s=\varphi$. By Lemma \ref{relative projectivity} $B_s$ is projective relative to $\B_s$ and so using \cite[Lemma 5.3.1]{HJ} again  the following embedding problem

\begin{equation}\label{free products}
\xymatrix{&&(B_s,\B_s)\ar[d]^{id}\ar@{-->}[dll]_{r}\\
             (B',\B')\ar[rr]_{\rho}&& (B_s,\B_s)}
\end{equation} 
has a solution $r$, so that $\rho r=id$.    
Then $\pi r\gamma_s$ is a solution of  \eqref{finite}, because $\alpha\pi r\gamma_s=\alpha_s\rho r\gamma_s=\alpha_s\gamma_s=\varphi$.
This finishes the proof.

\bigskip
The converse implication from (i) is in fact a generalization of Lemma \ref{relative projectivity} that we shall state as a

\begin{coro}\label{generalization relative projectivity} Let $(G,\G)$ be a  profinite pair such that $G_t$ are pro-$p$ and $G$ is prosoluble. If $G$ is $\calS$-projective relative to $\G$ then $G$ is projective relative to $\G$.

\end{coro}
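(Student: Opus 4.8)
The plan is to recognize that this corollary is exactly the implication ``(i) $\Rightarrow$ $G$ is projective relative to $\G$'' carried out inside the proof of Theorem \ref{relatively projective}, now isolated as a statement in its own right; I would therefore reproduce that argument and check that it uses nothing beyond the three hypotheses at hand, namely that $G$ is prosoluble, that every $G_t$ is pro-$p$, and that $G$ is $\calS$-projective relative to $\G$. Concretely, to prove that $G$ is projective relative to $\G$ I would begin with an arbitrary finite rigid embedding problem \eqref{finite}, with epimorphism $\alpha\colon(B,\B)\to(A,\A)$ and morphism $\varphi\colon(G,\G)\to(A,\A)$, and produce a solution.

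First I would extract the two reductions forced by the hypotheses. Since $G$ is prosoluble, $A=\varphi(G)$ is finite soluble, so for an epimorphism $f\colon F\to B$ from a free profinite group of finite rank the composite $\alpha f$ factors through a free prosoluble group $F_s$ of the same rank, say $\alpha f=f_s\eta$. Since every $G_t$ is pro-$p$, the representatives $A_i$ of the conjugacy classes in $\A$ are images $\varphi(G_{t_i})$ of pro-$p$ groups, hence finite $p$-groups; as $\alpha$ is rigid, the chosen preimages $B_i\in\B$ with $\alpha(B_i)=A_i$ are finite $p$-groups as well. These are precisely the data to which Lemma \ref{relative projectivity} applies.

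Next I would interpose the prosoluble free product of diagram \eqref{finite free}. Forming $B'=\coprod_{i=1}^n B_i\amalg F$ and the free prosoluble product $B_s=\coprod_{i=1}^n{}^s A_i\amalg^s F_s$, together with the epimorphisms $\pi\colon B'\to B$, $\rho\colon B'\to B_s$ and $\alpha_s\colon B_s\to A$ that make the diagram commute, I observe that $B_s$ is finitely generated prosoluble and hence second countable. Because $G$ is $\calS$-projective relative to $\G$ and $(\varphi,\alpha_s)$ is an $\calS$-embedding problem, Remark \ref{second countable embedding} yields a solution $\gamma_s\colon(G,\G)\to(B_s,\B_s)$ with $\alpha_s\gamma_s=\varphi$.

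Finally I would lift $\gamma_s$ back into $B'$ by means of the crucial input, Lemma \ref{relative projectivity}: since $B_s$ is a free prosoluble product of finite $p$-groups and a finitely generated projective prosoluble group, it is projective relative to $\B_s$, so the embedding problem \eqref{free products} given by $\rho$ and the identity admits a solution $r$ with $\rho r=\mathrm{id}$. Then $\pi r\gamma_s$ solves \eqref{finite}, since $\alpha\pi r\gamma_s=\alpha_s\rho r\gamma_s=\alpha_s\gamma_s=\varphi$. The single genuine obstacle is this last step: upgrading $\calS$-projectivity to full projectivity requires a \emph{fully} projective prosoluble group $B_s$ lying over $B$ through which the solution can be routed, and the pro-$p$ hypothesis on the $G_t$ is exactly what makes $B_s$ a free prosoluble product of finite $p$-groups, placing it within the scope of Lemma \ref{relative projectivity}.
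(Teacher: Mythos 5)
Your proposal is correct and follows essentially the same route as the paper: the paper proves this corollary by pointing back to the case (i) converse argument in the proof of Theorem \ref{relatively projective}, which is exactly the diagram \eqref{finite free} construction you reproduce, with the $\calS$-projectivity giving $\gamma_s$ and Lemma \ref{relative projectivity} supplying the section $r$ so that $\pi r\gamma_s$ solves \eqref{finite}. Your added remark that rigidity forces the $B_i$ to be finite $p$-groups (so that Lemma \ref{relative projectivity} applies) is a point the paper leaves implicit, but it is the same argument.
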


It shows that for prosoluble groups the notions of  projectivity and prosoluble projectivity relative to family of pro-$p$ subgroups coincide. 

\bigskip 
{\it Proof of Corollary \ref{finitely generated}} 
 follows from Proposition \ref{pro-p pile} and Theorem \ref{relatively projective}.

\bigskip

{\it Proof of Corollary \ref{herfort}}. 
By  Corollary \ref{generalization relative projectivity}   $G_s$ is projective relative to $\G_s=\{ G_x^g\mid g\in G_s\}$. Note that $\alpha$ extends to a rigid epimorphism of pairs $\alpha:(G,\G)\longrightarrow (G_s,\G_s)$, where $\G=\{ G_x^g\mid g\in G\}$.    Hence   by  Remark \ref{second countable embedding} the embedding problem 

\begin{equation}\label{ soluble an normal free products}
\xymatrix{&&(G_s,\G_s)\ar[d]^{id}\ar@{-->}[dll]_{r}\\
             (G,\G)\ar[rr]_{\alpha}&& (G_s,\G_s)}
\end{equation} 
has a solution which is the needed retraction.

\begin{theo}\label{characterization} Let $G=\coprod_{x \in X} G_x$ be  a  free profinite product  over a profinite space $X$. A second countable prosoluble group $H$  is isomorphic to a subgroup of $G$  if and only if one of the following holds:
 
 \begin{enumerate} 
 
 \item[(i)]  $H$ is isomorphic to a subgroup of a second countable free prosoluble product  $H_s=\coprod_{t\in T}{}^s H_t$ of a continuous subsystem of pro-$p$ groups $\{H_t\mid t\in T\}$   of the continuous system $\{G_x^g\mid x\in X, g\in G\}$ over $Y=\bigcup_{x\in X} G_x\backslash G$, where $T$ is a second countable profinite space;
 \item[(ii)] $H$ is isomorphic to a Frobenius group  $\widehat \Z_\pi\rtimes C$, with $C$ a finite cyclic subgroup of $G_x$ for some $x\in X$;
\item[(iii)] $H$ isomorphic to a subgroup of  a free factor $G_x$.
\end{enumerate} 
 
 \end{theo}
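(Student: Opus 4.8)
The plan is to prove the two implications separately, deriving necessity from Pop's trichotomy and sufficiency by explicit embeddings. For necessity, let $H$ be a second countable prosoluble subgroup of $G$ and apply Theorem \ref{pop}. If a conjugate of $H$ lies in some $G_x$ we are in (iii); if all nontrivial intersections $H\cap G_x^g$ are finite and conjugate in $H$ while $H$ lies in no free factor, then Theorem \ref{prosoluble general int} gives $H\cong\widehat\Z_\pi\rtimes C$ with $C=H\cap G_x^g$ finite cyclic, which is (ii). The remaining alternative, that there is a prime $p$ with every $H\cap G_x^g$ pro-$p$, is the heart of the argument. Here I would feed $H$ into Lemma \ref{sub free prod} (with $\C$ the class of all finite groups) to obtain a profinite $H$-space $T$ for which $\bfH=(H,T)$ is a projective pile whose nontrivial stabilizers are exactly the $H\cap G_x^g$, each contained in a distinct conjugate of a free factor (distinct cosets give distinct conjugates, as free factors are self-normalizing). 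Since the stabilizers are pro-$p$ and $H$ is prosoluble, Remark \ref{projective C-projective} makes $\bfH$ $\calS$-projective, and Remark \ref{second countable} replaces $T$ by a second countable pile admitting a continuous section. Proposition \ref{into free prod} (with $\C=\calS$, the projective part taken free prosoluble) then embeds $H$ into a free prosoluble product $\coprod_{\sigma}{}^{s}H_\sigma\amalg^s F_s$ of pro-$p$ groups $H_\sigma$ and a second countable free prosoluble group $F_s$.

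The main obstacle is now to absorb the free prosoluble factor $F_s$ into a free prosoluble product of pro-$p$ members of the system $\{G_x^g\}$, so as to reach exactly form (i). My plan is to exploit conjugate copies: if a nontrivial stabilizer $H_\sigma$ lies in $G_x^g$, then each $G$-conjugate $H_\sigma^{h}\le G_x^{gh}$ is again a pro-$p$ subgroup of a conjugate of a free factor, so the subsystem may be freely enlarged by such copies placed in distinct conjugates. By Lemma \ref{embeds} and Lemma \ref{embedding}, the free prosoluble product of two (or more) nontrivial pro-$p$ groups contains an isomorphic copy of itself together with a free prosoluble factor $F_s'$ of arbitrarily large countable rank; since $H$ is second countable, $F_s$ has countable rank and embeds into $F_s'$ as a free factor, whence $\coprod_{\sigma}{}^{s}H_\sigma\amalg^s F_s$ embeds into a free prosoluble product of pro-$p$ members of the system. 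I would organize the absorption by the number of nontrivial stabilizers: at least two factors (apply Lemma \ref{embedding} directly), exactly one (adjoin a conjugate copy to create a second factor), or none, in which case $H$ is projective prosoluble and one uses conjugates of any fixed nontrivial pro-$p$ subgroup of some $G_x$. The prodihedral situation $p=2$, $H_\sigma\cong C_2$ is the delicate subcase, requiring extra copies to force a non-abelian free prosoluble factor, exactly as in the closing paragraphs of the proof of Lemma \ref{embeds}. This yields (i), and it is here that second countability, flagged in the introduction, is essential: it bounds the rank of $F_s$ and makes the absorption possible.

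For sufficiency I would treat the three cases in turn. Case (iii) is immediate, $G_x$ being a free factor of $G$. For (ii), writing $H=\widehat\Z_\pi\rtimes C$ with $C\le G_x$ finite cyclic, pick $g\notin G_x$; then $\langle C,C^g\rangle=C\amalg C^g\cong C\amalg C$ because subgroups of distinct conjugates of a free factor are in free position (\cite[Theorem 9.1.12]{RZ}), and \cite[Corollary 5.2]{GH} embeds $\widehat\Z_\pi\rtimes C$ into $C\amalg C$, hence into $G$. For (i), given pro-$p$ groups $H_t$ each embeddable into some $G_{x(t)}$, I would place a copy of each into a distinct conjugate of $G_{x(t)}$; distinct conjugates of free factors over $Y=\bigcup_{x\in X}G_x\backslash G$ generate their free profinite product inside $G$ by the profinite Kurosh theorem (\cite[Theorem 9.1.9]{RZ}), and the Ersoy--Herfort retraction of Corollary \ref{herfort} then embeds the free prosoluble product $\coprod_{t}{}^{s}H_t$ into that free profinite product, hence into $G$; composing with the given $H\hookrightarrow\coprod_{t}{}^{s}H_t$ finishes the case. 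Throughout, the single genuine difficulty is the absorption step of the second paragraph; the rest assembles known machinery on relatively projective piles and free pro-$\C$ products.
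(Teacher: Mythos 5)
Your necessity argument is essentially the paper's: the trichotomy (via Pop's theorem and Theorem \ref{prosoluble general int}, where the paper instead routes through Theorem \ref{relatively projective}), then Lemma \ref{sub free prod}, Remarks \ref{projective C-projective} and \ref{second countable}, Proposition \ref{into free prod}, and absorption of the free prosoluble factor $F_s$ via Lemma \ref{embedding}, with the same case split on the number of non-trivial stabilizers. That half is sound and matches the paper.

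The gap is in the sufficiency of (i), at precisely the step you dismiss as routine. You assert that copies of the $H_t$ placed in ``distinct conjugates of $G_{x(t)}$'' generate their free profinite product inside $G$, citing the Kurosh theorem for open subgroups \cite[Theorem 9.1.9]{RZ}. That theorem decomposes an \emph{open} subgroup and says nothing of the sort: the failure of the closed-subgroup Kurosh theorem means that a continuous (in particular infinitely indexed) family of subgroups lying in distinct conjugates of free factors need not generate its free product, and it is not even clear that a continuous choice of conjugating elements over the profinite index space $T$ exists. Even for two conjugates $G_1$ and $G_2^g$ the paper needs Lemmas \ref{getting free free factor abstract} and \ref{getting free free factor} (via Burns's M.~Hall theorem for abstract free products) just to produce one $g$ and an open $U$ having $G_1\amalg G_2^g$ as a free factor. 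The paper's actual route for (i) is: reduce to $T$ homogeneous; use Corollary \ref{herfort} to pass from $\coprod_{t\in T}{}^s H_t$ to the free profinite product $\coprod_{t\in T}H_t$; embed the latter into $\coprod_{t\in T}(G\times\{t\})$ by functoriality \cite[Theorem 5.5.6]{R}; and realize $\coprod_{t\in T}(G\times\{t\})$ inside $G$ as the normal closure $\coprod_{f\in F}G^{f}$ of $G$ in a subgroup $G\amalg F\leq G$ supplied by Lemma \ref{getting free free factor}, with $F$ a non-trivial free profinite group homeomorphic to $T$, using \cite[Corollary 5.3.3]{R}. Nothing in your sketch produces such a continuously parametrized family of conjugates in free position, so this direction is not established. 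A smaller instance of the same issue occurs in your case (ii): $\langle C,C^g\rangle=C\amalg C^{g}$ is not \cite[Theorem 9.1.12]{RZ}, which only gives triviality of intersections of conjugates of factors; the paper again obtains $C\amalg C$ inside $G$ through the normal closure decomposition of \cite[Corollary 5.3.3]{R}.
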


\begin{proof}
Suppose $H$ is a subgroup of $G$. By Remark \ref{second countable} there  is a second countable profinite space $\widetilde T$ on which $H$ acts continuously such that $(H,\widetilde T)$ is a projective pile and $$\{H_t\mid t\in \widetilde T\}= \{H \cap G_x^g \mid x \in X,\ g \in G\}.$$ Then by Theorem \ref{relatively projective} either (ii) or (iii) hold, or all   $H_t$ are pro-$p$ and $H$ is $\calS$-projective relative to $$\{H \cap G_x^g \mid x \in X,\ g \in G\}.$$ 
As $\widetilde T$ is second countable,   $\widetilde T\longrightarrow \widetilde T/H$ admits a  continuous section $\sigma:\widetilde T/H\longrightarrow \widetilde T$  (see \cite[Lemma 5.6.7]{RZ}) and so by Proposition \ref{into free prod}  $H$ embeds into a free prosoluble product $\coprod_{t\in T}{}^s H_{t}\amalg^s F_s$ and a free prosoluble group $F_s$, where $T=im(\sigma)$. Now by Lemma \ref{embedding} $\coprod_{t\in T}{}^s H_t\amalg^s F_s$ embeds in $\coprod_{t\in T}{}^s H_t$ if $|T|>3$. Otherwise it embeds into $\coprod_{t\in T}{}^s H_t\amalg^s \coprod_{t\in T}{}^s H_t$ by Lemma \ref{embedding} again, that finishes the proof of this implication.

\bigskip

\bigskip
Converse. If (iii) hold there is nothing to prove.  
Suppose (ii) holds. Then  by \cite[Corollary 5.2]{GH} $H$ embeds into a free profinite product $C\amalg C$. 
As $G$ is not dihedral, By  \cite[Corollary 5.3.3]{R}  the normal closure is a free profinite product of copies of $G_x$ and so   $C\amalg C$ in turn embeds in $G$ as needed.

\medskip
 Suppose now (i) holds. 
 As any second countable profinite space embeds into second countable homogeneous, we may assume that $T$ is homogeneous and it suffices to embed $H_s$ into $G$. By Corollary \ref{herfort} $H_s$ retracts into $\coprod_{t\in T} H_t$ so it suffices to embed $\coprod_{t\in T} H_t$ into $G$.   Consider $G\times T$ as a continuous family $\{G\times \{t\}, \mid t\in T\}$ over $T$ and form a free profinite product $\coprod_{t\in T}(G\times\{t\})$. Then by \cite[Theorem 5.5.6]{R} 
 $\coprod_{t\in T} H_t$ embeds into $\coprod_{t\in T}G\times\{t\}$ and so it suffices to embed $\coprod_{t\in T}(G\times\{t\})$ into $G$. By Lemma \ref{getting free free factor} $G$ contains $G\amalg F$, where $F$ is a non-trivial free profinite group and so is homeomorphic to $T$.   
  By  \cite[Corollary 5.3.3]{R}  the normal closure  $N$ of $G$ in $G\amalg F$ is $\coprod_{f\in F} \coprod_{f\in F}  G^{f}\cong \coprod_{t\in T}(G\times\{t\}) $ as required.

   \end{proof}

\bigskip

{\it Proof of Theorem \ref{characterization fg}.}
 Only if is just a particular case of Theorem \ref{characterization} as well as the converse for the cases (ii) and (iii). 
 
 Moreover, suppose (i) of Theorem \ref{characterization fg} holds.  By Remark \ref{second countable} 
 there is a second countable profinite $H$-space $T$ such that
$$\{H \cap H_i^h \mid \ h \in H_s, i=1,\ldots, n\} = \{H_t \mid t \in T\}$$ and
 $\bfH = (H,T)$ is a $\calS$-projective pile. Then by Theorem \ref{relatively projective} $(H,T)$ is projective.  By Corollary \ref{finitely generated}  $T_0=\{t\in T\mid G_t\neq 1\}$ is closed and $T_0/H$ is finite and $\sum_t d(H_t)\leq d(H)$. 
 
 It follows that $\{H_t\mid t\in T_0\}$ is a continuous subfamily of $\{G_x^g\mid x\in X, g\in G\}$  so we can apply Theorem \ref{characterization} again to finish the proof.

\section{Questions \ref{subgroup} and \ref{retract}}

\bigskip

\begin{prop}\label{single projective} Let $G_s=A \amalg^s P$ be a prosoluble free product of a pro-$p$ group $A$ and a prosoluble projective group $P$. Then $(G_s, T)$ is projective, where $T=A\backslash G_s$.
\end{prop}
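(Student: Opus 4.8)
The plan is to read this proposition as the exact specialization of Lemma \ref{sub free prod} to the prosoluble world, followed by the soluble-to-full upgrade supplied by Corollary \ref{generalization relative projectivity}. First I would regard $G_s = A \amalg^s P$ as a free pro-$\calS$ product whose continuous family of factors is the single pro-$p$ group $A$ (indexed over a one-point space) together with the projective prosoluble part $P$. This is legitimate, since $A$, being pro-$p$, is pro-$\calS$, and $P$ is prosoluble projective; thus $G_s$ is literally of the form $(\coprod_{x}{}^\calS G_x)\coprod^\calS P$ required by Lemma \ref{sub free prod}, with $X$ a point and $G_x = A$.

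Next I would apply Lemma \ref{sub free prod} with $\C = \calS$ to the closed subgroup $H = G_s$ itself (a profinite group is a closed subgroup of itself). With this choice the associated family is
\[
\calH = \{G_s \cap A^g \mid g \in G_s\} = \{A^g \mid g \in G_s\},
\]
and the space produced by the lemma is precisely $\bigcup_{x} G_x \backslash G = A \backslash G_s = T$, the coset space appearing in the statement. Parts (a)--(c) of Lemma \ref{sub free prod} then give that $(G_s, T)$ is an $\calS$-projective pile whose point stabilizers are exactly the conjugates $A^g$ (with $A^{g}\cap A^{g'}=1$ for distinct cosets). I want to stress that Lemma \ref{sub free prod} only requires $P$ to be projective, not finitely generated; this is precisely where the present statement goes beyond Lemma \ref{relative projectivity}, whose hypotheses force $P$ finitely generated and the factors finite.

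It remains to upgrade $\calS$-projectivity to full projectivity. By Definition \ref{def: projective} the pile conclusion just obtained says exactly that $G_s$ is $\calS$-projective relative to the conjugacy-closed family $\{A^g\}$. Since every member of this family is pro-$p$ and $G_s$ is prosoluble, Corollary \ref{generalization relative projectivity} applies verbatim and yields that $G_s$ is projective relative to $\{A^g\}$, i.e.\ that $(G_s, T)$ is a projective pile, which is the assertion.

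The only genuinely delicate point is bookkeeping rather than mathematics: one must check that the family extracted from Lemma \ref{sub free prod} consists of the conjugates of the factor $A$ \emph{only} — the projective summand $P$ never contributes a point stabilizer — and that the pile-theoretic $\calS$-projectivity of $(G_s,T)$ is correctly transcribed into the pair-theoretic hypothesis of Corollary \ref{generalization relative projectivity}. This transcription is exactly the content of Definition \ref{def: projective} together with Remark \ref{matching definitions}, which identify a projective pile with the corresponding relatively projective pair (the family of point stabilizers being the conjugacy class of $A$). Once these identifications are made explicit, the argument is immediate.
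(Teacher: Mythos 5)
Your proof is correct and follows exactly the paper's own argument: apply Lemma \ref{sub free prod} with $\C=\calS$ to $H=G_s$ itself to obtain that $(G_s,T)$ is an $\calS$-projective pile, then invoke Corollary \ref{generalization relative projectivity} to upgrade to full projectivity. The extra bookkeeping you supply (identifying the stabilizer family with the conjugates of $A$ and matching the pile and pair formulations) is just a more explicit rendering of the same two-step proof.
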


\begin{proof}   By Lemma \ref{sub free prod} $(G_s , T)$ is an $\calS$-projective pile, where $T=A\backslash G_s$. So by Corollary \ref{generalization relative projectivity}  $(G_s, T)$ is projective.    
\end{proof}

The next Corollary answers Question \ref{retract}.

\begin{coro}\label{retracts} Let $A$ be a pro-$p$ group and $P$ a  prosoluble projective
	group. Then $G=A \amalg P$ admits a prosoluble retract $G_s=A \amalg^s P$.
\end{coro}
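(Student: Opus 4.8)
The plan is to obtain the retraction as the solution of a rigid embedding problem of piles, exploiting the projective pile structure on $G_s=A\amalg^s P$ furnished by Proposition \ref{single projective}. Write $\alpha\colon G=A\amalg P\longrightarrow G_s=A\amalg^s P$ for the natural epimorphism; since $A$ is pro-$p$ and $P$ is prosoluble, $G_s$ is the maximal prosoluble quotient of $G$, and $\alpha$ restricts to the identity on each of $A$ and $P$. I would produce a homomorphism $r\colon G_s\longrightarrow G$ with $\alpha r=\mathrm{id}_{G_s}$; then $r$ is injective and $r\alpha\colon G\longrightarrow r(G_s)$ is a retraction onto the prosoluble subgroup $r(G_s)\cong A\amalg^s P$, which is precisely the asserted retract.

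First I would equip both groups with the relevant pile structures. Put $S=A\backslash G$ and $T=A\backslash G_s$, each a profinite space carrying the right action by multiplication, so that $(G,S)$ and $(G_s,T)$ are piles (cf. \cite[Construction 4.3]{HZ} and Lemma \ref{sub free prod}); in both cases the non-trivial point stabilizers are exactly the conjugates of $A$ and the orbit space is a single point. The epimorphism $\alpha$ induces the continuous equivariant map $S\longrightarrow T$, $Ag\mapsto A\alpha(g)$, which is well defined because $\alpha(A)=A$. By Proposition \ref{single projective} the pile $(G_s,T)$ is projective.

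Next I would check that $\alpha\colon (G,S)\longrightarrow (G_s,T)$ is a rigid epimorphism. It is an epimorphism since $\alpha\colon G\to G_s$ and $S\to T$ are surjective and every coset $Ah'\in T$ lifts to some $Ah\in S$ with $\alpha(A^{h})=A^{h'}$. Rigidity is the only point requiring care: because $\alpha|_A$ is the identity, $\alpha$ carries each stabilizer $A^{g}$ isomorphically onto $A^{\alpha(g)}$, and the induced map of orbit spaces is a homeomorphism between one-point spaces. With rigidity in hand, projectivity of $(G_s,T)$ lets me solve the embedding problem
\begin{equation*}
\xymatrix{&& (G_s,T)\ar[d]^{\mathrm{id}}\ar@{-->}[dll]_{r}\\
             (G,S)\ar[rr]_{\alpha}&& (G_s,T)}
\end{equation*}
exactly as in the proof of Proposition \ref{into free prod} (via \cite[Proposition 5.4]{HZ}), obtaining a solution $r\colon (G_s,T)\to (G,S)$ with $\alpha r=\mathrm{id}_{G_s}$. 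This $r$ is the required retraction.

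The one delicate point is the verification of rigidity of $\alpha$ together with the fact that projectivity of the pile $(G_s,T)$ suffices to solve this embedding problem even though the bottom group $G$ is infinite; both are already covered by the cited results, so no genuinely new difficulty arises. Alternatively, one may note that the statement is the special case of Corollary \ref{herfort} with $X$ a single point, so the construction above is really the streamlined version of that argument in the one-factor case.
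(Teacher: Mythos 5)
Your proof is correct and follows essentially the same route as the paper: the paper deduces the result from Proposition \ref{single projective} together with Proposition \ref{into free prod} (whose proof is exactly the rigid embedding problem $\alpha\colon (G,S)\to (G_s,T)$ solved via the projectivity of $(G_s,T)$), whereas you have simply inlined that argument in the one-orbit case and made the splitting $\alpha r=\mathrm{id}$, hence the retraction, explicit.
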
 

\begin{proof} By Proposition \ref{single projective} $(G_s, T)$ is a projective pile, where $T=A\backslash G_s$. But $|T/G_s|=1$ and so by Proposition \ref{into free prod} $G_s$ embeds into $G=A\amalg P$.

\end{proof}

\bigskip

We finish the paper  answering  Question \ref{subgroup} in the second countable case, but assuming that the free factors are prosoluble rather than pro-$p$. 

\begin{theo}\label{subgroups}  Let $G = G_1\amalg G_2$ be a free profinite product of  prosoluble groups $G_1$ and $G_2$. Then
every second countable prosoluble subgroup of $G$ is isomorphic to a closed subgroup of $L=G_1 \amalg^s G_2$.\end{theo}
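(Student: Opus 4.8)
The plan is to reduce, much as in the proof of Theorem \ref{characterization}, to embedding a free prosoluble product of subgroups of conjugates of the free factors, and then to push everything into $L$ by means of the canonical epimorphism $\rho\colon G\to L$ together with the abundance of free prosoluble structure that $L$ contains. We may assume both $G_1,G_2$ are non-trivial (otherwise $G=G_i=L$) and $|G_1|+|G_2|>4$; in the only remaining case $G_1\cong G_2\cong C_2$ every finite quotient of $G$ is dihedral, hence soluble, so $G=L$ and there is nothing to prove.

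First I would record the relative projectivity. Viewing $G=G_1\amalg G_2$ as a free profinite product and applying Lemma \ref{sub free prod} (with the class of all finite groups and $P$ trivial) together with Remark \ref{second countable}, I obtain a second countable profinite $H$-space $\widetilde T$ such that $(H,\widetilde T)$ is a projective pile and $\{H_t\mid t\in\widetilde T\}\cup\{1\}=\{H\cap G_i^{g}\mid i\in\{1,2\},\ g\in G\}\cup\{1\}$; in particular each $H_t$ lies in a conjugate $G_{i(t)}^{g(t)}$ and $\{H_t\}$ is a continuous family of subgroups of $G$. By Remark \ref{projective C-projective} the pile $(H,\widetilde T)$ is $\calS$-projective, and a continuous section $\widetilde T/H\to\widetilde T$ exists by \cite[Lemma 5.6.7]{RZ} since $\widetilde T$ is second countable. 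Proposition \ref{into free prod} (with $\C=\calS$) then embeds $H$ into a free prosoluble product
\[
W=\coprod_{s\in\Sigma}{}^{s} H_s\ \amalg^{s}\ P,
\]
where $\Sigma$ is second countable, each factor $H_s$ is (identified with) one of the $H_t$, hence a subgroup $H_s\le G_{i(s)}^{g(s)}\le G$, and $P$ is a prosoluble projective group.

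It then remains to embed $W$ into $L$, and the engine for this is the canonical epimorphism $\rho\colon G=G_1\amalg G_2\to L=G_1\amalg^{s}G_2$, the identity on each free factor. For $g\in G$ and $h\in G_x$ one has $\rho(g^{-1}hg)=\rho(g)^{-1}h\,\rho(g)$, so $\rho$ maps every conjugate $G_x^{g}$ isomorphically onto $G_x^{\rho(g)}$; in particular $\rho$ restricts to an embedding on each $H_s$. Being a continuous map of profinite groups $\rho$ is closed, so $\bigcup_s\rho(H_s)=\rho(\bigcup_s H_s)$ is closed in $L$ and $\{\rho(H_s)\}$ is again a continuous family. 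Since these images may overlap in $L$, I separate them by passing to distinct copies of $L$: I would show that $L$ contains a free prosoluble product $\coprod_{y\in Y}{}^{s}L_y$ of copies $L_y\cong L$ indexed by a Cantor set $Y$. Indeed, by Lemma \ref{embeds} there is a subgroup $L'\amalg^{s}F\le L$ with $L'\cong L$ and $F$ a non-abelian free prosoluble group, which I take of rank $2$ (hence second countable); the prosoluble analogue of \cite[Corollary 5.3.3]{R} then identifies the normal closure of $L'$ in $L'\amalg^{s}F$ with the free prosoluble product of the conjugates of $L'$ indexed by the coset space $Y$, a Cantor set.

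Finally I would assemble the embedding. Fix a closed embedding $\Sigma\sqcup\{y_0\}\hookrightarrow Y$. Using $\rho$ as a fibrewise-injective morphism of the continuous family $\{H_s\}_{s\in\Sigma}$ into the constant family of copies of $L$, and invoking \cite[Theorem 5.5.6]{R} exactly as in the proof of Theorem \ref{characterization}, the induced map $\coprod_{s\in\Sigma}{}^{s}H_s\to\coprod_{s\in\Sigma}{}^{s}L_{s}\le\coprod_{y\in Y}{}^{s}L_y$ is injective. The projective factor is absorbed separately: by \cite[Lemma 7.6.3]{RZ} the prosoluble projective group $P$ embeds into a free prosoluble group, which in turn embeds into the non-abelian free prosoluble subgroup of the extra copy $L_{y_0}$. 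Combining these yields an embedding $W\hookrightarrow\coprod_{y\in Y}{}^{s}L_y\le L$, and hence $H\hookrightarrow L$. I expect the main obstacle to be precisely this last step: verifying that the fibrewise embeddings supplied by $\rho$ assemble, through distinct free factors $L_y$, into a genuine continuous injective embedding of the whole free prosoluble product $W$ — that is, pinning down the correct prosoluble reading of \cite[Theorem 5.5.6]{R} — and, in the preceding step, realizing enough mutually free copies of $L$ inside $L$ itself.
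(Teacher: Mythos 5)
Your proposal is correct in substance and its first half (reducing, via Lemma \ref{sub free prod}, Remark \ref{second countable} and Proposition \ref{into free prod}, to embedding a free prosoluble product $\coprod_{s\in\Sigma}{}^s H_s\amalg^s P$ with $H_s\le G_{i(s)}^{g(s)}$) coincides with the paper's, which simply quotes Corollary \ref{embedding in prosoluble free product}. The endgame differs. You keep the subgroups $H_s$, push each one into a separate copy of $L$ inside a free prosoluble product $\coprod_{y\in Y}{}^s L_y\le L$ obtained from the normal closure of $L'\cong L$ in $L'\amalg^s F$, and then must invoke a prosoluble version of the fibrewise-monomorphism theorem \cite[Theorem 5.5.6]{R} to see that the assembled map is injective --- exactly the point you flag as the main obstacle (it does hold, since Chapter 5 of \cite{R} is written for extension-closed classes, but it is an extra ingredient). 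The paper sidesteps this: it first enlarges each $H\cap G_i^g$ to the full conjugate $G_i^g$, so that the target is $\coprod_{i=1}^2{}^s\coprod_{g}{}^s G_i^g$, and then uses Lemma \ref{embedding} to produce $G_1\amalg^s G_2^l\amalg^s U\le L$ with $|U|=|L|=|G|$; a homeomorphism $G\to U_1\cup U_2$ then identifies the indexing sheaf of $\coprod_{i,g}{}^s G_i^g$ with that of the normal closure $\coprod_{u\in U}{}^s(G_1\amalg^s G_2^l)^u$, yielding an honest isomorphism of free prosoluble products rather than a fibrewise embedding. Thus the paper's route needs only the uniqueness of free products over isomorphic sheaves, at the price of the cardinality bookkeeping $|G|=2|U|$, while yours needs the monomorphism property of $\coprod^s$ but handles the projective factor $P$ more explicitly (the paper silently drops the free factor $F_s$ coming from Corollary \ref{embedding in prosoluble free product}; your absorption of $P$ into an extra copy $L_{y_0}$ repairs exactly that point). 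Both routes are viable; yours is not a gap, provided you do verify the prosoluble reading of \cite[Theorem 5.5.6]{R} and the continuity of the extended-by-trivial-fibres sheaf over $Y$.
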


\begin{proof} If $G_1, G_2$ are of order $\leq 2$ then $G=L$ and there is nothing to prove. Suppose both $G_1$ and $G_2$ are non-trivial and $|G_1|+|G_2|>4$.

 Let $H$ be a prosoluble subgroup of $G$.   Then by Corollary \ref{embedding in prosoluble free product} $H$ embeds into a free prosoluble product $\coprod_{i=1}^2{}^s\coprod_{g\in D_i}{}^s  (H\cap G_i^g)$, where $g$ runs through the closed set $D_i$ of representatives of the double cosets $G_i\backslash G/H$, $i=1,2$. This embeds in turn into  $\coprod_{i=1}^2{}^s\coprod_{g\in D_i}{}^s  G_i^g$.  Thus it suffices to embed $\coprod_{i=1}^2{}^s\coprod_{g\in D_i}{}^s  G_i^g$ into $L$.

 Note that $|G|=|L|$. By Lemma \ref{embedding} $L$ contains  a free prosoluble product $G_1 \amalg^s G_2^l \amalg^s U$ for some $l\in L$, where $|U|=|L|$. Thus $|G|=|L|=|U|=2|U|$.  Therefore  there exists a homeomorphism $\sigma:G\longrightarrow U_1\cup U_2$, where $U_1, U_2$ are two copies of $U$.  By  \cite[Corollary 5.3.3]{R}  the normal closure of $G_1 \amalg^s G_2^l$ in $G_1 \amalg^s G_2^l \amalg^s U$ is $$\coprod_{u\in U}{}^s (G_1 \amalg^s G_2^l)^u=\coprod_{i=1}^2{}^s\coprod_{u\in U_i}{}^s  G_i^{l_iu}, $$ where $l_1=1,l_2=l$. Thus  there is an isomorphism $\coprod_{i=1}^2{}^s\coprod_{g\in G}{}^s  G_i^g\longrightarrow \coprod_{i=1}^2{}^s\coprod_{u\in U_i}{}^s G_i^{l_iu}$ induced by a homeomorphism $G\longrightarrow U_1 \cup U_2$ and  isomorphisms $id:G_1\longrightarrow G_1$,  $G_2\longrightarrow G_2^l$. This finishes the proof.   

\end{proof}

\noindent{\bf Acknowledgment.}

The author is grateful to Pavel Shumyatsky who gave the argument for Lemma \ref{frattini argument1}. The author also thanks  Dan Haran and Wolfgang Herfort   for comments to improve considerably the text of the paper.

\bigskip

\noindent
Financial Support: Partially suppoted by CNPq and FAPDF.

\bigskip

\noindent \href{https://www.mat.unb.br/pessoa/152/Pavel-Zalesski}{Pavel A. Zalesskii}, Universidade de Brasília, Departamento de Matemática, 70910-900 Brasília DF, Brazil. {\it Email address}: \href{mailto:pz@mat.unb.br}{pz@mat.unb.br}.

\end{document}